\newtheorem{theorem}{Theorem}[section]
\newtheorem{proposition}[theorem]{Proposition}
\newtheorem{lemma}[theorem]{Lemma}
\theoremstyle{definition}    
\newtheorem{definition}[theorem]{Definition}
\theoremstyle{remark}
\newtheorem{remark}[theorem]{Remark}
\newtheorem{remarks}[theorem]{Remarks}
\newtheorem{example}[theorem]{Example}
\newcommand\A{\mathcal{A}}
\renewcommand{\AA}{\mathbb{A}}
\newcommand{\Cour}[1]      {[\![#1]\!]}
\newcommand\G{\mathcal{G}}
\newcommand{\V}{\mathcal{V}}
\renewcommand{\L}{\mathcal{L}}
\renewcommand{\O}{\mathcal{O}}
\newcommand{\T}{\mathbb{T}}
\newcommand{\ca}{\mathcal}
\newcommand{\E}{\ca{E}}
\newcommand{\R}{\mathbb{R}}
\newcommand{\C}{\mathbb{C}}
\newcommand{\Q}{{\ca Q}}
\newcommand\pt{\on{pt}}
\newcommand{\ez}{\mathsf{e}}
\newcommand{\vz}{\mathsf{v}}
\newcommand\lie[1]{\mathfrak{#1}}
\renewcommand{\k}{\lie{k}}
\newcommand{\h}{\lie{h}}
\newcommand{\g}{\lie{g}}
\newcommand{\m}{\lie{m}}
\renewcommand{\a}{\mathsf{a}}
\newcommand{\on}{\operatorname}
\newcommand{\Ad}{ \on{Ad} }
\newcommand{\ad}{\on{ad}}
\newcommand{\Hom}{ \on{Hom}}
\renewcommand{\ker}{ \on{ker}}
\newcommand{\Mult}{  \on{Mult}}
\newcommand{\da}{\dasharrow}
\newcommand\qu{/\kern-.7ex/} 
\newcommand{\fus}{\circledast} 
\newcommand{\TG}{\mathbb{T}G}
\newcommand{\lra}{\longrightarrow}
\newcommand{\hra}{\hookrightarrow}
\renewcommand{\d}{{\mbox{d}}}
\newcommand{\ol}{\overline}
\newcommand\Phinv{\Phi^{-1}}
\newcommand{\f}{\frac}
\newcommand{\p}{\partial}
\renewcommand{\l}{\langle}
\renewcommand{\r}{\rangle}
\newcommand\hh{{\f{1}{2}}}
\newcommand{\ti}{\tilde}
\newcommand{\eeq}{\end{eqnarray*}}
\newcommand{\beq}{\begin{eqnarray*}}
\newcommand{\pr}{\on{pr}}
\newcommand{\wh}{\widehat}
\newcommand{\mf}{\mathfrak}
\newcommand{\rra}{\rightrightarrows}
\renewcommand{\subset}{\subseteq}
\newcommand{\dd}{\mathfrak{d}}
\newcommand{\cc}{\mathfrak{c}}
\newcommand{\sz}{\mathsf{s}}
\newcommand{\tz}{\mathsf{t}}
\newcommand{\op}{{\on{op}}}
\begin{document}
\title{Manin pairs and moment maps revisited}
\author{Eckhard Meinrenken}
\author{Selim Tawfik}

\begin{abstract}
The notion of quasi-Poisson $G$-spaces with $D/G$-valued moment maps was introduced by Alekseev and Kosmann-Schwarzbach in 
1999. Our main result is a \emph{Lifting Theorem}, establishing a bijective correspondence between the categories of quasi-Poisson $G$-spaces with $D/G$-valued moment maps and  of quasi-Poisson $G\times G$-spaces with $D$-valued moment maps. Using this result, 
we give simple constructions of fusion and conjugation for these spaces, and new examples coming from moduli spaces. 
\end{abstract}
\maketitle

\tableofcontents

\section{Introduction}\label{sec:intro}
A Manin pair $(\dd,\g)$ is a Lie algebra $\dd$ with an $\ad$-invariant metric $\l\cdot,\cdot\r$, together with a Lagrangian
Lie subalgebra $\g$.  Suppose $(D,G)$ is a Lie group pair integrating the Manin pair $(\dd,\g)$. 
The choice of a  Lagrangian subspace $\m$ complementary to $\g$ determines the structure of a quasi Lie bialgebra, and so 
$G$ becomes a quasi-Poisson Lie group. 

In their article \emph{Manin pairs and moment maps} \cite{al:ma}, Alekseev and Kosmann-Schwarzbach introduced the 
concept of a \emph{quasi-Poisson $G$-manifold}, given by a $G$-manifold $M$ with a bivector field $\pi$, satisfying certain compatibility conditions. They also defined a notion of  \emph{moment map}
\begin{equation}\label{eq:aks} \Phi\colon M\to D/G\end{equation}
for such a quasi-Poisson $G$-action, similar to Lu's $G^*$-valued moment map for Poisson Lie group actions \cite{lu:mo}
and generalizing the group-valued moment maps of \cite{al:mom}. Subsequent contributions by Burztyn-Crainic \cite{bur:di,bur:di1} expressed $D/G$-valued moment maps within the framework of Dirac geometry; this was further simplified in \cite{bur:qua,bur:cou}.  

The present work was motivated by \v{S}evera's construction \cite{sev:lef} of a \emph{fusion product} $M_1\fus M_2$ of quasi-Poisson $G$-spaces with $D/G$-valued moment maps. \v{S}evera arrived at this fusion  operation as a consequence of 
his theory of left and  right centers for quasi-Poisson spaces. 
Similarly, he obtained the \emph{conjugate} $M^*$ of a space with $D/G$-valued moment map; such an involution 
had earlier been described by Bursztyn-Crainic \cite[Proposition 4.3]{bur:di1}. 

Aiming for a more direct  construction of fusion and conjugation, we were led to an alternative description of $D/G$-valued moment maps, by lifting the moment map to the group $D$. 
Given a $G$-equivariant map \eqref{eq:aks}, define a space $\wh{M}$ and a $G\times G$-equivariant map $\wh{\Phi}$ by the 
fiber product diagram
\begin{equation}\label{eq:diagram} \xymatrix{
\wh{M}\ar[r]^{\wh{\Phi}}\ar[d] & D\ar[d]\\
M\ar[r]_{\Phi} & D/G
}
\end{equation}
Theorem \ref{th:liftingtheorem} shows that a $G$-equivariant quasi-Poisson structure on $M$, with moment map $\Phi$,  
is equivalent to a $G\times G$-equivariant quasi-Poisson structure on $\wh{M}$, with moment map $\wh{\Phi}$.
This \emph{Lifting Theorem}   sets up a 1-1 correspondence between quasi-Poisson $G$-spaces with $D/G$-valued moment maps and 
quasi-Poisson $G\times G$-spaces with $D$-valued moment maps.

The fusion operation for $G$-spaces with $D/G$-valued moment maps may now be  described in terms of a 
fusion operation of $G\times G$-spaces with $D$-valued moment maps: 
Given two such spaces, with moment maps $\wh{\Phi}_i\colon \wh{M}_i\to D$, one defines 
\[ \wh{M}_1\fus \wh{M}_2=(\wh{M}_1\times \wh{M}_2)\big/(\{e\}\times G_{\Delta}\times \{e\}),\]
with equivariant map $\wh{\Phi}_1\fus \wh{\Phi}_2\colon \wh{M}_1\fus \wh{M}_2\to D$ induced by the pointwise product. 
Theorem \ref{th:fusion} asserts that the result is again a quasi-Poisson space. Taking quotients, one obtains a quasi-Poisson $G$-space $M_1\fus M_2$ with $D/G$-valued moment map; this space is usually different, as a $G$-space, from the direct product $M_1\times M_2$. Similarly, the conjugate $\wh{M}^*$ of a quasi-Poisson $G\times G$-space with $D$-valued moment map 
$\wh{\Phi}\colon \wh{M}\to D$ is defined  to be the same space but with the new $G\times G$-action obtained by interchanging the  two factors, and with $\wh{\Phi}^*=\wh{\Phi}^{-1}$. By 
Theorem \ref{th:conjugation} this space is again quasi-Poisson. Again, the quasi-Poisson $G$-spaces 
$M$ and $M^*=\wh{M}^*/G$  are usually different as $G$-spaces. 

We shall develop these results in the broader context of generalized moment map targets $\Q$ for Manin pairs $(\dd,\g)$, given by manifolds $\Q$ of dimension equal to that of $\g$, equipped with a  $\dd$-action with Lagrangian stabilizers. This includes the moment map target $D/G$, but also $D/U$ for any closed Lagrangian subgroup $U\neq G$, 
the dual Poisson Lie group $G^*$ (given an extension of $(\dd,\g)$ to a Manin triple), and many other examples. In particular, $D$ will be regarded as a moment map target for the Manin pair $(\ol{\dd}\oplus \dd,\g\oplus \g)$. Accordingly, the Lifting Theorem \ref{th:liftingtheorem} is proved as a general result on quotients and lifts of Hamiltonian spaces for these generalized moment map targets. 

A quasi-Poisson structure on a manifold determines a generalized foliation,  known as its quasi-symplectic foliation. The quasi-Poisson space $M$ is called  \emph{quasi-symplectic} if its components are leaves. The axioms for  quasi-symplectic spaces with $\Q$-valued moment maps may be expressed in terms of 2-forms $\omega\in \Omega^2(M)$, depending on the choice of \emph{splitting} of the Dirac structure over $\Q$. Theorem \ref{th:groupoid}  gives an explicit description of the quasi-symplectic groupoids 
 \[ G\ltimes \Q\rra \Q\]
 integrating these Dirac structures, generalizing \cite[Section 6.4]{bur:int} and \cite{bur:di1,igl:uni} 
  (for the case $\Q=D/G$).

 The moment map target $D$ has a distinguished splitting, leading to an 
 explicit description of the 2-form on the quasi-symplectic groupoid $(G\times G)\ltimes D\rra D$ integrating the Dirac structure, and to differential form descriptions of   quasi-symplectic $G\times G$-spaces with $D$-valued moment maps similar to \cite{al:mom}.   Similar descriptions for $\Q=D/G$ depend on the choice of a principal connection. 

As applications of the Lifting Theorem, one finds many examples of spaces with $D/G$-valued moment maps, 
coming from moduli spaces of flat connections.
More specifically, these examples correspond to surfaces with bi-colored boundary as illustrated below, where the black edges 
are called `free' and the red edges `colored'. 

\begin{center}
	\includegraphics[scale=0.5]{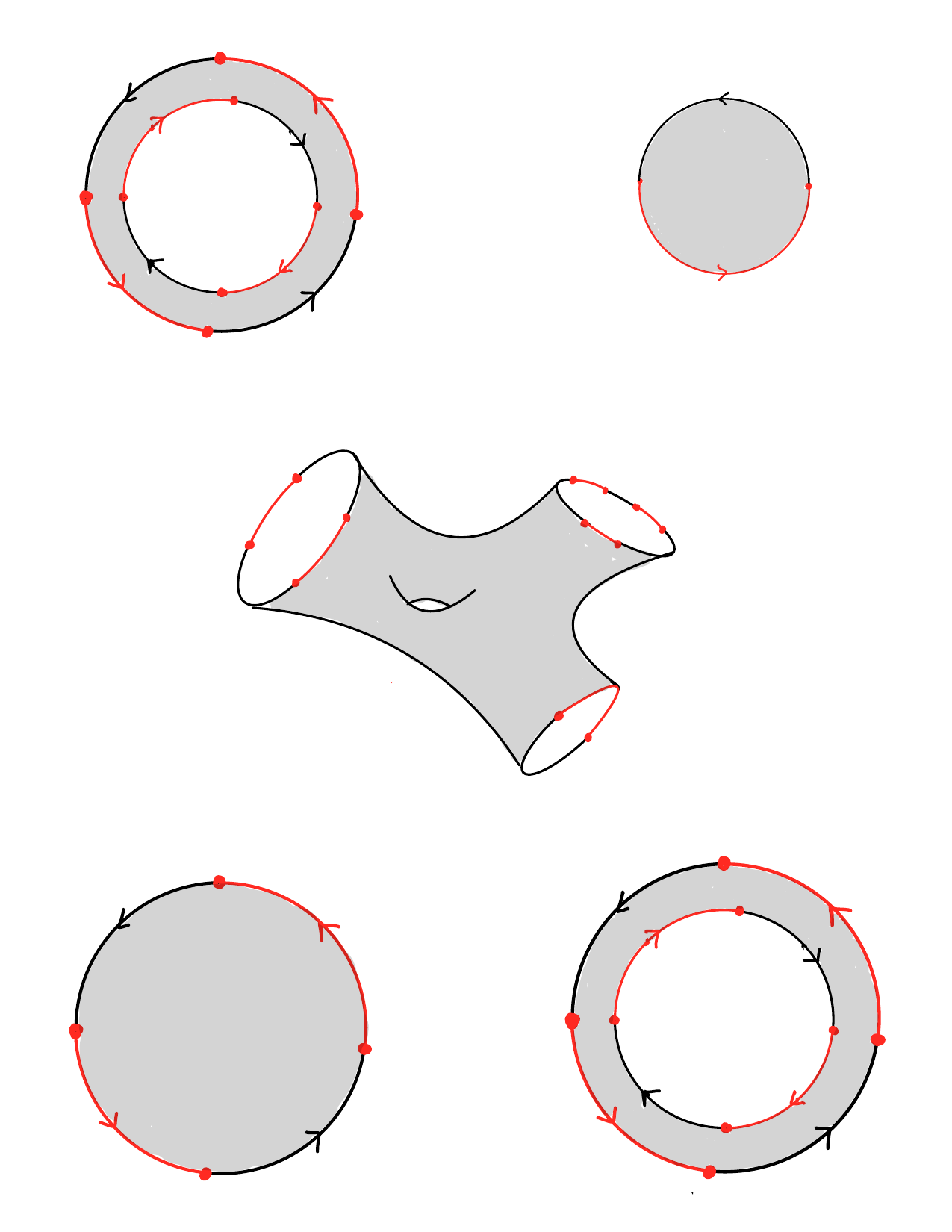}
\end{center}

The moduli space of flat $D$-bundles over such a surface, with trivializations at the vertices and with holonomy at colored edges in $G$, is a quasi-symplectic $(G\times G)^r$-space with $D^r$-valued moment map, where 
$r$ is the number of  free edges. Note that similar pictures appear in \cite{sev:lef}, with somewhat different interpretations. 

The organization of the paper is as follows. Section \ref{sec:background} reviews background material on quasi-Poisson geometry
and moment maps. We define generalized moment map targets $\Q$ for Manin pairs and develop their basic theory. 
Section \ref{sec:quasisymplectic} specializes to the quasi-symplectic case. In the presence of splittings, this leads to descriptions in terms of differential forms. In Section \ref{sec:integration}, we construct the quasi-symplectic groupoids integrating the Dirac structures on the moment map targets $\Q$. Section \ref{sec:quotientlifts} is devoted to the Lifting Theorem for moment map targets. 
As a special case, this proves the equivalence between $D$-valued moment maps and $D/G$-valued moment maps. 
This is then used in Section \ref{sec:fusion} to construct fusion and conjugation operations of spaces with $D/G$-valued moment maps in terms of their associated spaces with $D$-valued moment maps. 
Finally, Section \ref{sec:moduliexamples} gives examples from the moduli spaces of surfaces with bi-colored boundary.  The appendices discuss background material on Dirac geometry, the reduction of Dirac morphisms and a Dirac Cross Section Theorem, and the Dirac-geometric approach to the moduli space examples. 
\bigskip

\noindent{\bf Acknowledgments.} The first author is indebted to Pavol \v{S}evera for sharing his thoughts on fusion of $D/G$-valued moment maps, in an email exchange from October 2013. These ideas appeared in his article \cite{sev:lef}, and motivated the 2021 Ph.D. thesis \cite{taw:th} of the second author. We also thank Anton Alekseev, Daniel \'{A}lvarez, Henrique Bursztyn, and Li Yu for helpful comments. Research of E. Meinrenken was supported by an NSERC Discovery Grant.

\section{Moment maps for Manin pairs}\label{sec:background}
We begin by recalling the definition of Alekseev and Kosmann-Schwarzbach of quasi-Poisson spaces, using the reformulation in Dirac geometric terms. See the appendix for background information on 
Courant algebroids, Dirac structures, and their morphisms. Note that what we call Dirac morphisms are also referred to as \emph{strong Dirac morphisms} or \emph{morphisms of Manin pairs}.

\subsection{Quasi-Poisson manifolds}
A \emph{Manin pair}  is a Lie algebra $\dd$, equipped with a nondegenerate $\ad$-invariant symmetric bilinear form (from now on referred to as \emph{metric}) $\l\cdot,\cdot\r$, together with a Lie subalgebra $\g\subset \dd$ which is \emph{Lagrangian}, i.e., maximal isotropic for the metric. Given a Lie group $G$ integrating $\g$, with an action on $\dd$ by metric-preserving automorphisms, extending the adjoint action on $\g$ and with infinitesimal action the adjoint action of $\g$ on $\dd$, we speak of a \emph{$G$-equivariant Manin pair}.  

\begin{definition}\label{def:quasipoisson}
A \emph{quasi-Poisson $\g$-manifold} for a Manin pair $(\dd,\g)$ is a manifold $M$ together with a Dirac morphism 
\[R_0\colon  (\T M,TM)\da (\dd,\g).\]
\end{definition}

\begin{remarks}
\begin{enumerate}
	\item In the language of \cite{bur:cou,igl:ham}, $M$ is a Hamiltonian space for the Manin pair $(\dd,\g)$. 
	\item The Dirac-geometric approach to quasi-Poisson manifolds was developed by Bursztyn and Crainic
	\cite{bur:di,bur:di1}. 
	The equivalence of the original definitions \cite{al:ma} with Definition \ref{def:quasipoisson} was proved by 
	Bursztyn, Iglesias-Ponte, and \v{S}evera \cite{bur:cou} (see also \cite{bur:qua}). 
\item The Dirac morphism determines a $\g$-action on $M$. If $(\dd,\g)$ is a $G$-equivariant Manin pair, 
and $R_0$ is equivariant for a $G$-action on $M$ integrating this $\g$-action, we say that $M$ is a \emph{quasi-Poisson $G$-manifold}. 
\item The original setting of \cite{al:ma} involves the choice of a Lagrangian subspace $\m\subset \dd$ complementary to $\g$. 
The backward image of $\m$ under $R_0$ is a Lagrangian subbundle complementary to $TM$, which therefore is the graph of a bivector field $\pi$. The properties of $R_0$ being a Dirac morphism then translate into the conditions 
	\[ [\pi,\pi]=\chi_M,\ \ \ \L_{\xi_M}\pi=-(F(\xi))_M,\]
where 	$\chi\in \wedge^3\g$ and  $F\in \wedge^2\g\otimes \g^*$ are defined in terms of $\m$, and 
$\wedge\g\to \Gamma(\wedge TM),\ \phi\mapsto \phi_M$ is the algebra morphism extending the action map.
The triple $(\g,\chi,F)$ defines a 
quasi-Lie-bialgebra.  If $\m$ is a Lagrangian Lie subalgebra, then $\pi$ is a Poisson structure, and the definition amounts to a 
Lie bialgebra action as in the work of Lu \cite{lu:mo}. 
\end{enumerate}	
\end{remarks}

The Courant morphism $R_0$ may be regarded as a Dirac structure 
\begin{equation}\label{eq:diractstructure}
 E\subset \dd\times \ol{\T M}\end{equation}
over $M$. This was first described as a Lie algebroid by Bursztyn-Crainic \cite[Theorem 2.5]{bur:di}, and recognized as a Dirac structure in \cite{bur:di1} (see also \cite[Section 6.2.3]{lib:th}). Since it is a Lie algebroid, the 
range of its anchor map defines a singular foliation of $M$. We shall call this the \emph{quasi-symplectic foliation}. 
\begin{remark}
After choice of a Lagrangian complement $\m$, defining a bivector field $\pi$, the quasi-symplectic foliation is spanned by 
vector fields
\[ \{\xi_M|\,\xi\in\g\}+\{\pi^\sharp(\mu)|\, \mu\in \Gamma(T^*M)\}.\] 
If $\m$ is a Lagrangian Lie subalgebra, so that $\pi$ is Poisson, one also has the symplectic foliation defined by the Poisson structure. The quasi-symplectic foliation need not coincide with the latter.
\end{remark}

\begin{definition}
A quasi-Poisson $\g$-space $M$ is called \emph{quasi-symplectic} if the Lie algebroid $E$ is transitive. 
\end{definition}

If $M$ is a quasi-Poisson $\g$-manifold, then every leaf $\iota\colon N\to M$ of its quasi-symplectic foliation becomes a quasi-Poisson $\g$-manifold, by restriction. The corresponding Courant morphism is the composition of $R_0$ with $\T\iota\colon \T N\da \T M$.

\begin{example}\label{ex:coisotropic}
Let $(\dd,\g)$ be a Manin pair, and $M$ a manifold with a $\dd$-action 
\[ \a\colon M\times \dd\to TM\] 
with co-isotropic stabilizers. Equivalently, the range of the dual map $\a^*\colon T^*M\to M\times \dd$ is isotropic. 
Then $M$ acquires the structure of a quasi-Poisson $\g$-manifold, with 
\[ R_0=\on{gr}(\a|_{M\times \g})+\on{gr}(\a^*)\subset \dd\times \ol{\T M}\]
given as the sum of graphs of the restricted action map 
$\a|_{M\times \g}\colon M\times \g\to TM$ and  the dual map $\a^*\colon T^*M\to M\times \dd$. (Note that the range of $\a^*$ is isotropic.) Its quasi-symplectic leaves are the $\g$-orbits. Given a Lagrangian complement $\m$ to $\g$ in $\dd$, and denoting by	$p\colon \dd\to \g$ be the projection along $\m$,  the resulting bivector field is given by $\pi^\sharp=-\a\circ p\circ \a^*$. (See \cite[Theorem 2.9]{al:pur}.) 
	
By the construction of \cite{lib:cou}, a $\dd$-action with coisotropic stabilizers also determines an \emph{action Courant algebroid}	$\AA_M=M\times \dd$ over $M$, with $E_M=M\times \g$ as a Dirac structure. A choice of a complementary Lagrangian subbundle $F_M\subset \AA_M$ (given for example by $M\times \m$) defines a bivector field $\pi\in \Gamma(\wedge^2 TM)$, which is Poisson whenever $F_M$ is integrable (e.g., 
$\m$ a Lie subalgebra). As shown in  \cite{lib:cou}, many well-known Poisson structures are obtained from this mechanism. 
\end{example}

\begin{example}\label{ex:amm0} \cite{al:qu}
Let $\g$ be a Lie algebra with an invariant metric, and denote by $\ol{\g}$ the same Lie algebra with the opposite metric. 
Consider the Manin pair $(\dd,\g)=(\ol{\g}\oplus \g,\g_\Delta)$ (where $\g_\Delta\cong \g$ is embedded diagonally).  
If $G$ is a Lie group integrating $\g$, the $\dd$-action on $G$ 
\[\dd\to \Gamma(TG),\ \ 
 (\xi_1,\xi_2)\mapsto -\xi_1^R+\xi_2^L\]
(where the superscripts indicate the left-invariant and right-invariant vector fields)  has Lagrangian stabilizers. 
It gives $G$ the structure of a quasi-Poisson $G$-manifold, with quasi-symplectic leaves the conjugacy classes.  
\end{example}

\subsection{Moment maps}
The  theory of Alekseev and Kosmann-Schwarzbach \cite{al:ma} considers moment maps valued in the homogeneous space $D/G$, where $(D,G)$ is a group pair 
integrating the given Manin pair $(\dd,\g)$. In the Dirac-geometric setting, it is natural to allow for more general 
targets. See \cite[Definition 3.1]{lib:sypo}.
\begin{definition}[Moment map targets]
	A \emph{moment map target} for a Manin pair $(\dd,\g)$ is a manifold $\Q$ together 
	with a transitive action of $\dd$ with Lagrangian stabilizers.	A  \emph{$G$-equivariant moment map target} for a $G$-equivariant Manin pair is a moment map target together 
	with a $G$-action integrating the $\g\subset \dd$-action. Given a moment map target $\Q$ for $(\dd,\g)$, we shall denote by $\Q^{\on{op}}$ the same space regarded as a moment map target for $(\ol{\dd},\g)$. 
\end{definition}
Moment map targets satisfy  $\dim \Q=\dim\g$. We obtain  a  Dirac structure
\[ (\AA_{\Q},E_{\Q})=(\Q\times \dd,\ \Q\times \g);\]
for $G$-equivariant moment map targets, this Dirac structure is $G$-equivariant. By dimension count,  the  Courant algebroid 
$\AA_{\Q}$ is \emph{exact}. 

\begin{definition}\label{def:qvalued}
For a quasi-Poisson $\g$-space $M$, with Dirac morphism $R_0$, we define a \emph{$\Q$-valued moment map} to be a  Dirac morphism 
\[ R\colon (\T M,TM)\da (\AA_{\Q},E_{\Q})\]
whose composition with $(\AA_{\Q},E_{\Q})\da (\dd,\g)$  equals $R_0$. For a quasi-Poisson $G$-space $M$, 
with a $G$-equivariant moment map target, we require $R$ to be $G$-equivariant. 
\end{definition}

When a quasi-Poisson $\g$-space admits a moment map, then the Dirac morphism $R$ is uniquely determined 
by $R_0$ together with $\Phi$. Indeed, $R\subset (\Q\times \dd)\times \ol{\T M}$, as a subbundle along $\on{gr}(\Phi)\subset \Q\times M$, agrees 
with $R_0\subset \dd \times \ol{\T M}$ under the identification $\on{gr}(\Phi)\cong M$. 

\begin{example}\label{ex:orbits}
As a special case of Example \ref{ex:coisotropic}, the moment map target $\Q$ is itself a quasi-Poisson $\g$-space, 
with quasi-symplectic foliation given by the $\g$-orbits.  The identity map  serves as a moment map. Similarly, the $\g$-orbits $\O\subset \Q$ are quasi-Poisson $\g$-spaces, with the  
inclusion map as a moment map. 
\end{example}

If $\m\subset \dd$ is a Lagrangian complement to $\g$, then the backward image of $\m$ under $R_0$ is a Lagrangian subbundle complementary to $TM$, hence  of the form  $\on{gr}(\pi_M)$ for a bivector field $\pi_M$ on $M$. 
In particular, we obtain such a bivector field on $\Q$ itself. The latter is given 
by (see, e.g., \cite[Section 3.2]{lib:cou})
\begin{equation}\label{eq:qbivector}
 \pi_\Q=\hh \sum_a (e_a)_\Q\wedge (f^a)_\Q\end{equation}
where $e_a$ is a basis of $\g$, and $f^a$ the dual basis of $\m$. Given a $\Q$-valued moment map, the Dirac morphism $R$ relates the Lagrangian splittings, and hence $\Phi$ relates the bivector fields: 
\[ \pi_M\sim_\Phi \pi_\Q.\]

\subsection{Examples}

\begin{example}
	[Classical moment maps]
	For any Lie algebra $\g$, one has the Manin pair 
	$(\dd,\g)$ where 
	$\dd=\g^*\rtimes \g$, with metric defined by the pairing between $\g$ and $\g^*$. The Lie algebra $\dd$ acts on 
	$\g^*$, with the first factor acting by translation and the second factor by the coadjoint representation. The stabilizers for this action are Lagrangian. This defines the standard moment map target 
	$\g^*$ for the classical theory of Kostant and Souriau \cite{so:st}. 
\end{example}
\begin{example}[$D/G$-valued moment maps] \label{ex:aks}
Suppose $(D,G)$ is a Lie group pair integrating the Manin pair $(\dd,\g)$. 	The $\dd$-action on $D/G$ 
(obtained by differentiating the natural  $D$-action) has Lagrangian stabilizers, given at $dG\in D/G$ by $\Ad_d(\g)$.  
These are the moment map targets from the Alekseev-Kosmann-Schwarzbach theory \cite{al:ma}. For $D=\g^*\rtimes G$, one recovers the classical theory. 
\end{example}

\begin{example}[$G$-valued moment maps] \label{ex:amm1}
Let $G$ be a Lie group whose Lie algebra $\g$ carries an invariant metric. By Example \ref{ex:amm0}, the $G\times G$-action on $G$ has coisotropic stabilizers; hence $\Q=G$ is a moment map target for the Manin pair 
	\[ (\dd,\g)=(\ol{\g}\oplus\g     ,\g_\Delta).\] 
This is a special case of Example \ref{ex:aks}, with the group pair $(D,G)=(G\times G,G_\Delta)$. 
Note that in this case, the anti-diagonal $\m=\{(\xi,-\xi)|\xi\in\g\}$ is an invariant Lagrangian complement, 
leading to a description of the theory in terms of bivector fields and 2-forms. This is the theory of 
 $G$-valued moment maps  
	\cite{al:qu,al:mom}. 
\end{example}

\begin{example}
Suppose $(\dd,\g)$ is a Manin pair, $D$ a Lie group integrating $\dd$, and 
$U\subset D$ any closed Lagrangian Lie subgroup. Then  $\Q=D/U$  defines a 
moment map target for $(\dd,\g)$ (just as for the $D/G$ example). For instance, 
 if $G$ is a semisimple real Lie group, with Iwasawa decomposition 
$G=KAN$, one may take $D=G\times G$ (with Lie algebra $\dd=\ol{\g}\oplus \g$) and 
$U=A_\Delta (N\times N)$. The resulting moment map target is $D/U=K\times K\times A$. 
	\end{example}

\begin{example}[$G^*$-valued moment maps]\label{ex:lu}
	According to Drinfeld's theory \cite{dr:qu}, multiplicative Poisson structures on a Lie group $G$ are classified by Manin triples $(\dd,\g,\h)$. That is, $\h\subset \dd$ is a   Lagrangian Lie subalgebra
	complementary to $\g$.  The metric on $\dd$ identifies $\h\cong \g^*$. Let $H=G^*$ be the 1-connected Lie group integrating $\h\cong \g^*$. The \emph{(right) dressing action} of $\dd$ on $\Q=G^*$ has generating vector fields $\zeta_H,\ \zeta\in\dd$ given 
	in terms of their contraction with the right-invariant Maurer-Cartan form by 
	\begin{equation}\label{eq:dressing} 
	\iota_{\zeta_H}\theta^R|_h=\pr_\h (\Ad_h\zeta)
	\end{equation}
	where $\pr_\h$ is projection to $\h$ along $\g$. See, e.g., \cite[Section 5.1]{lib:dir}.
	The stabilizer algebras for this action are given by 
	$\dd_h=\Ad_{h^{-1}}\g$, and in particular are Lagrangian. This defines the moment map target $G^*$ for 
	Lu's moment map theory \cite{lu:mo}. In certain examples, $G,\ G^*$ are realized as closed subgroups of $D$, 
	in such a way that the group multiplication $G\times G^*\to D$ is a global diffeomorphism. In this case, one has 
	$D/G\cong G^*$. 
\end{example}

\begin{example}[Dirac Lie groups]\label{ex:H} 
	Suppose $(\dd,\g)$ is a Manin pair such that 
	$\g$ admits a complement $\h$ that is a Lie subalgebra, but not necessarily Lagrangian. Let $H$ be a 1-connected integration of 
	$\h$.  The right dressing action 
	of $\dd$ on $\Q=H$ is defined by the same formula \eqref{eq:dressing} as before, and has 
	Lagrangian stabilizers. 
	One obtains on $H$ the structure of a \emph{Dirac Lie group} in the sense of \cite{lib:dir} (different from \cite{jot:dir,ort:mu}); 
	in the classification from \cite{lib:dir}, these are the \emph{exact} cases. 
	In particular, there is a Dirac morphism 
	\[ (\AA_H,E_H)\times (\AA_H,E_H)\da (\AA_H,E_H).\]
	with base map $\Mult_H$, and a Dirac morphism $(\AA_H,E_H)\da (\AA_H,E_H)^{\on{op}}$ with base map $\on{Inv}_H$.
\end{example}

\begin{example}[$P$-valued moment maps]\label{ex:semisimple}
	Let $\g^c$ be a complex semisimple Lie algebra, and $\g\subset \g^c$ a real form. 
	Let $\dd=\g^c$ as a 
	real Lie algebra, with metric $\l\cdot,\cdot\r$ given by the imaginary part of the Killing form. This vanishes on $\g$, and so 
	$(\dd,\g)=(\g^c,\g)$ is a Manin pair, resulting in a theory of  $G^c/G$-valued moment maps. The subspace
	$\m=i\g$ is an invariant Lagrangian complement.
	For the case that $\g$ is the Lie algebra of a compact Lie group $G$, we may use the Cartan decomposition $G^\C=GP$ 
	to identify the moment map target with $P=\exp(i\g)$. See \cite[Section 10.3]{al:mom}.
\end{example}

\section{The quasi-symplectic case}\label{sec:quasisymplectic}
The quasi-symplectic $\g$-spaces with $\Q$-valued moment maps correspond to the special case that the 
Dirac morphism $R\colon (\T M, TM)\da (\AA_\Q,E_\Q)$ is \emph{exact}. (See Lemma \ref{lem:exactly} below.) This allows for a description of such spaces
in terms of differential forms, once a \emph{splitting} of the Courant algebroid $\AA_\Q$ has been chosen.

 \subsection{Splittings}
 In Appendix \ref{app:A}, we recall how (isotropic) splittings $j\colon  TM\to \AA$ of exact Courant algebroids give an isomorphism $\AA\cong \T_\eta M$ for a closed 3-form $\eta$. The space of splittings is an affine space over the vector space of 
 2-forms; changing $j$ by a 2-form $\varpi$  (as in Equation \eqref{eq:changeofsplitting} below) replaces 
 $\eta$ with $\eta'=\eta+\d\varpi$.
 
 For a moment map target, a splitting $j$ of the Courant algebroid $\AA_\Q=\Q\times\dd$ 
may be regarded as a 1-form 
\begin{equation}
\alpha\in \Omega^1(\Q,\dd)
\end{equation}
with the properties 
\[ (\alpha(v)|_q)_\Q=v,\ \ \l\alpha(v),\alpha(v)\r=0,\] 
for all $v\in T_q\Q$. Modifying the splitting by a 2-form $\varpi$ replaces $\alpha$ with $\alpha^\varpi$, given by 
\begin{equation}\label{eq:alphavarpi} \l\alpha^\varpi,\zeta\r=\l\alpha,\zeta\r-\iota(\zeta_{\Q})\varpi.\end{equation} 
In terms of $\alpha$, 
the isomorphism 
\begin{equation}\label{eq:q}
s\colon \Q\times \dd\stackrel{\cong}{\lra} \T_\eta \Q\end{equation}
is explicitly given on constant sections by
\[ s(\zeta)=\zeta_{\Q}+\l\alpha,\zeta\r,\ \ \zeta\in\dd.\]
In particular, $\l s(\zeta_1),s(\zeta_2)\r=\l\zeta_1,\zeta_2\r$ and $\Cour{s(\zeta_1),s(\zeta_2)}=s([\zeta_1,\zeta_2])$.
One has the following formula for the 3-form $\eta$, obtained by the  same calculation as in the proof of 
Bursztyn-Crainic \cite[Lemma 3.5]{bur:di1} (for the case $\Q=D/G$):
 \begin{equation} \eta=-\hh \l \d\alpha,\alpha\r-\f{1}{6}\l [\alpha,\alpha],\alpha\r.\end{equation}


The Lie algebra $\dd$ acts on  $\AA_\Q=\Q\times \dd$ by infinitesimal Courant automorphisms, and hence acts on splittings. 
The 1-form part of the equation $\Cour{s(\zeta_1),s(\zeta_2)}=s([\zeta_1,\zeta_2])$ gives 
\begin{equation}\label{eq:interm}
\left\l  \L_{(\zeta_1)_{\Q}}\alpha+[\zeta_1,\alpha],\zeta_2\right\r=\iota_{(\zeta_2)_\Q}(\d\l\alpha,\zeta_1\r+\iota_{(\zeta_1)_\Q}\eta).
\end{equation}
From \eqref{eq:interm}, we see that the splitting is invariant under the action of a given element $\zeta\in\dd$ if and only if 
\begin{equation}\label{eq:equivariance} \d \l \alpha,\zeta\r+\iota_{\zeta_{\Q}}\eta=0.\end{equation}
If $\Q$ is a $G$-equivariant moment map target,  
the diagonal $G$-action on $\Q\times \dd$ is by Courant algebroid automorphisms. Hence, $G$ acts on the affine space of splittings; the 
pullback of the splitting defined by $\alpha$ under an element $g\in G$ is the splitting defined by 
 $g^{-1}\cdot\alpha=\Ad_{g^{-1}}\A_g^*\alpha$, where $\A_g\in \on{Diff}(\Q)$ is the action on the base. 
We hence obtain a map
\begin{equation}\label{eq:betaq}
 \beta\colon G\to  \Omega^2(\Q)\end{equation}
where $\beta(g)$ is the 2-form  (as in \eqref{eq:changeofsplitting}) 
relating the two splittings:
 \[ \l \A_g^*\alpha,\Ad_g\zeta\r-\l\alpha,\zeta\r=-\iota_{\zeta_{\Q}}\beta(g).\]
 Note that $\beta$ has  the cocycle property,  $\beta(hg)=\beta(g)+\A_g^*\beta(h)$, and that 
 \begin{equation}\label{eq:betaq1}
 \d\beta(g)=-\A_g^*\eta+\eta,\end{equation}
 as a special case of \eqref{eq:changeofsplitting}.  For later reference, we also note the 
 formula for the infinitesimal cocycle: 
\begin{lemma}\label{lem:hard} For all $\xi\in\g$, 
 \begin{equation}\label{eq:betaq2}\f{d}{d t}\Big|_{t=0}\beta(\exp(t\xi)) =
 \d\l\alpha,\xi\r+\iota_{\xi_{\Q}}\eta
 \end{equation}
\end{lemma}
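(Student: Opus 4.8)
The plan is to differentiate the relation defining the cocycle $\beta$, namely
\[ \l \A_g^*\alpha,\Ad_g\zeta\r-\l\alpha,\zeta\r=-\iota_{\zeta_{\Q}}\beta(g),\]
with respect to $g$ along $g=\exp(t\xi)$ at $t=0$, for each fixed $\zeta\in\dd$. Since contraction with $\zeta_{\Q}$ does not depend on $t$, the right-hand side differentiates to $-\iota_{\zeta_{\Q}}\big(\tfrac{d}{dt}\big|_{t=0}\beta(\exp(t\xi))\big)$, so the whole task reduces to identifying the $t$-derivative of the left-hand side with $-\iota_{\zeta_{\Q}}\big(\d\l\alpha,\xi\r+\iota_{\xi_{\Q}}\eta\big)$; equality of these contractions for all $\zeta$ will then give \eqref{eq:betaq2}.

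First I would expand the left-hand derivative by the product rule. At $t=0$ both $\A_g$ and $\Ad_g$ are the identity, so only the two first-order terms survive, governed by
\[ \frac{d}{dt}\Big|_{t=0}\A_{\exp(t\xi)}^*\alpha=-\L_{\xi_{\Q}}\alpha,\qquad \frac{d}{dt}\Big|_{t=0}\Ad_{\exp(t\xi)}\zeta=[\xi,\zeta],\]
where the sign in the first identity is fixed by the fundamental-vector-field convention (the derivative of a pullback is the Lie derivative along the velocity field $q\mapsto\tfrac{d}{dt}|_0\exp(t\xi)\cdot q$). This yields $-\l\L_{\xi_{\Q}}\alpha,\zeta\r+\l\alpha,[\xi,\zeta]\r$. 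Then I would invoke $\ad$-invariance of the metric to rewrite $\l\alpha,[\xi,\zeta]\r=-\l[\xi,\alpha],\zeta\r$, so that the derivative collapses to $-\l\L_{\xi_{\Q}}\alpha+[\xi,\alpha],\zeta\r$.

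The last expression is exactly what \eqref{eq:interm} evaluates: setting $\zeta_1=\xi$ and $\zeta_2=\zeta$ there identifies $\l\L_{\xi_{\Q}}\alpha+[\xi,\alpha],\zeta\r$ with $\iota_{\zeta_{\Q}}\big(\d\l\alpha,\xi\r+\iota_{\xi_{\Q}}\eta\big)$. Combining, the left-hand derivative equals $-\iota_{\zeta_{\Q}}\big(\d\l\alpha,\xi\r+\iota_{\xi_{\Q}}\eta\big)$, whence
\[ \iota_{\zeta_{\Q}}\Big(\frac{d}{dt}\Big|_{t=0}\beta(\exp(t\xi))\Big)=\iota_{\zeta_{\Q}}\big(\d\l\alpha,\xi\r+\iota_{\xi_{\Q}}\eta\big)\]
for every $\zeta\in\dd$. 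Because the $\dd$-action on $\Q$ is transitive, the vectors $\zeta_{\Q}|_q$ span $T_q\Q$ at each point, so these contractions determine the $2$-forms and the identity follows.

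The main obstacle is entirely the bookkeeping of signs: one must pin down the fundamental-vector-field convention so that $\tfrac{d}{dt}\big|_{0}\A_{\exp(t\xi)}^*=-\L_{\xi_{\Q}}$, confirm that this is the same convention under which \eqref{eq:interm} was derived, and apply $\ad$-invariance with the matching sign. These three choices must be mutually consistent, since it is precisely their interplay that makes the two $[\xi,\alpha]$-contributions combine correctly rather than doubling or cancelling. Everything else is a first-order Taylor expansion, and the final passage from contracted identities to an identity of $2$-forms uses only transitivity of the action.
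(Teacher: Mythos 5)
Your argument is correct and is essentially the paper's own proof: both differentiate the cocycle relation \eqref{eq:betaq} along $g=\exp(t\xi)$, use $\ad$-invariance of the metric, and identify the result via the $1$-form part of $\Cour{s(\xi),s(\zeta)}=s([\xi,\zeta])$, i.e.\ Equation \eqref{eq:interm}; your sign convention $\tfrac{d}{dt}\big|_{0}\A_{\exp(t\xi)}^*=-\L_{\xi_\Q}$ matches the one implicit in the paper's computation. The only addition is your explicit closing remark that the contractions $\iota_{\zeta_\Q}$ determine the $2$-forms by transitivity of the $\dd$-action, which the paper leaves tacit.
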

\begin{proof}
Taking the derivative of \eqref{eq:betaq}, we obtain 
\[ \iota_{\zeta_{\Q}}\f{d}{d t}\big|_{t=0}\beta(\exp(t\xi))=\L_{\xi_{\Q}}\l\alpha,\zeta\r
-\l\alpha,[\xi,\zeta]\r.\]
Using the 1-form part of the identity $\Cour{s(\xi),s(\zeta)}=s([\xi,\zeta])$, 
%
we see that the right hand side may be written $\iota_{\zeta_{\Q}}(\d\l\alpha,\xi\r+\iota_{\xi_{\Q}}\eta)$. 
\end{proof}

\begin{example}\label{ex:dg}
The Courant algebroid over  $\Q=D/G$
does not have a distinguished splitting, in general. 
Suppose however that $\g\subset \dd$ admits a \emph{$G$-invariant} 
Lagrangian complement $\m$, as in Examples \ref{ex:amm1} and \ref{ex:semisimple}. 
(See \cite[Remark 5.2]{bur:di1} for a necessary and sufficient condition.) 
Then $\Ad_d(\m)$ depends only on the coset $dG$, and is a 
	complement to the stabilizer $\dd_{dG}=\Ad_d(\g)$. The corresponding 1-form  
	 $\alpha\in \Omega^1(D/G,\dd)$ is
\begin{equation}\label{eq:alpha1} 
p^*\alpha=-\Ad_d \pr_\m\theta^L_D,\end{equation}
 where $p\colon D\to D/G$ is the quotient map, $\theta^L_D$ is the left-invariant Maurer-Cartan form on $D$, and $\pr_\m$ denotes projection to $\m$ along $\g$. This splitting is 
 $D$-invariant; conversely, every $D$-invariant splitting is of this form.

\end{example}

\begin{example}
		In the 
	Poisson Lie group case $\Q=G^*$ (Example \ref{ex:lu}), the form
	\[ \alpha=\theta^L_{G^*}\]  defines a splitting, with $\eta=0$. This splitting is usually not $\g$-invariant (let alone $\dd$-invariant). 
\end{example}

\begin{example}\label{ex:H1}
	 Consider the Dirac Lie group case $\Q=H$ (Example \ref{ex:H}). 
	Let $\m$ be the Lagrangian complement given as the mid-point between $\h,\h^\perp$ in the affine space of complements to $\g$. 
	For $h\in H$, 
	the subspace $\Ad_{h^{-1}}\m$ is a Lagrangian complement to the stabilizer algebra $\dd_h=\Ad_{h^{-1}}\g$, and so defines a splitting of the Courant algebroid. 
	The corresponding 1-form is 
	\begin{equation}\label{eq:alpha2}
	\alpha=\Ad_{h^{-1}}\pr_\m\theta^R_H\in \Omega^1(H,\dd).\end{equation}

\end{example}

\begin{example}\label{ex:amm2}
	The Courant algebroid $\AA_G=G\times (\ol\g\oplus\g)$ over the 
	moment map target $\Q=G$ (Example \ref{ex:amm1}) is $G\times G$-equivariant for the action 
\begin{equation}\label{eq:ammaction}
 (a_1,a_2)\cdot (g,\xi_1,\xi_2)=(a_1 g a_2^{-1},\ \Ad_{a_1}\xi_1,\Ad_{a_2}\xi_2).\end{equation}
	The 1-form 
	\[ \alpha=\hh(-\theta^R_G,\theta^L_G)\in \Omega^1(G,\ol{\g}\times \g)\]
	defines a canonical $G\times G$-equivariant splitting, with $\eta=\f{1}{12}\theta^L\cdot [\theta^L,\theta^L]$
	the \emph{Cartan 3-form} on $G$. This splitting may be regarded as a special case of \eqref{eq:alpha1} (viewing $G$ as a quotient $D/G$ for $D=G\times G$) 
	or as a special case of \eqref{eq:alpha2} (for the choice $\h=0\oplus \g\subset \dd$). 
\end{example}

\subsection{The quasi-symplectic case}
Recall from Appendix \ref{app:A} the notion of \emph{exact} (or \emph{full}) Courant morphisms between exact Courant algebroids. Given 
splittings, these may be described in terms of 2-forms. 

\begin{lemma}\label{lem:exactly}
	For a quasi-Poisson $\g$-space $M$ with ${\Q}$-valued moment map $\Phi$, the associated Dirac morphism 
	$R\colon (\T M,TM)\da (\AA_{\Q},E_{\Q})$ is exact if and only if $M$ is quasi-symplectic. 
\end{lemma}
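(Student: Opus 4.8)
The plan is to unwind the definition of an exact Courant morphism and match it with the transitivity of the Lie algebroid $E=R_0$. Recall from Appendix \ref{app:A} that a Courant morphism between exact Courant algebroids is exact precisely when, after a choice of splittings, it is given by a single $2$-form: writing $\AA_\Q\cong \T_\eta\Q$ via a splitting and using the tautological splitting of $\T M$, the relation takes the form
\[ R_\om=\big\{\,\big((\d\Phi(v),\beta),\,(v,\,\Phi^*\beta+\iota_v\om)\big)\ \big|\ v\in TM,\ \beta\in T^*\Q\,\big\} \]
for some $\om\in\Om^2(M)$. The essential feature is that the projection of $R_\om$ onto the source-tangent and target-cotangent parts, $R\to TM\oplus\Phi^*T^*\Q$, is a fiberwise isomorphism; in particular the source-tangent projection $R\to TM$ is surjective.

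First I would record the identification of $E$ with $R$. Since the $\Q$-component of any point of $R$ over $m\in M$ is forced to equal $\Phi(m)$, composing $R$ with the fiberwise projection $(\AA_\Q,E_\Q)\da(\dd,\g)$, $(q,\zeta)\mapsto\zeta$, is a fiberwise linear isomorphism onto its image $R_0=E$; both $R_m$ and $E_m$ are Lagrangian of rank $\dim\g+\dim M$, so nothing is lost in the composition. Under this identification the anchor of the Lie algebroid $E$ (the projection onto the $TM$-summand) corresponds exactly to the source-tangent projection of $R$. Hence $E$ is transitive, i.e.\ $M$ is quasi-symplectic, if and only if the source-tangent projection $R\to TM$ is fiberwise surjective.

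It then remains to show that surjectivity of the source-tangent projection is equivalent to exactness of $R$. One direction is immediate from the displayed form $R_\om$. For the converse I would argue pointwise: by anchor-compatibility of Courant morphisms over $\on{gr}(\Phi)$, every element of $R$ over $m$ has the form $((\d\Phi(v),\beta),(v,\mu))$, so the kernel of the combined projection $R_m\to T_mM\oplus T^*_{\Phi(m)}\Q$ consists of elements $((0,0),(0,\mu_0))$. Pairing such an element with a general element of the Lagrangian $R_m$ and using isotropy yields $\mu_0(v)=0$ for all $v$ in the image of the source-tangent projection; if that image is all of $T_mM$, then $\mu_0=0$, so the combined projection is injective, hence (by the equal dimensions noted above) an isomorphism. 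Reading off the resulting linear map $T_mM\oplus T^*_{\Phi(m)}\Q\to T^*_mM$ and invoking isotropy again shows that its $T^*\Q$-component is $\Phi^*$ and its $TM$-component is skew, i.e.\ equal to a $2$-form $\om$; thus $R=R_\om$ is exact.

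I expect the main obstacle to be this last step—confirming that the pointwise isomorphism really produces the $2$-form normal form $R_\om$ (that the extracted bundle map $T^*\Q\to T^*M$ is exactly $\Phi^*$ and the remaining part is genuinely skew), and that the integrability of $R$ forces the compatibility $\d\om=\Phi^*\eta$ expected of an exact morphism. Everything after that is bookkeeping: the chain ``exact $\Leftrightarrow$ source-tangent surjective $\Leftrightarrow$ anchor of $E$ surjective $\Leftrightarrow$ $E$ transitive $\Leftrightarrow$ quasi-symplectic'' closes the proof. (If the appendix takes fullness of the tangent-level relation as the definition of exactness, this obstacle disappears and the isotropy computation is unnecessary.)
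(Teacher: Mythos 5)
Your proof is correct and follows essentially the same route as the paper's: both reduce exactness of $R$ to surjectivity of the tangent part of its anchor, identify that with surjectivity of the anchor of $E=R_0$ via the projection $(\AA_\Q,E_\Q)\da(\dd,\g)$, and then invoke the definition of quasi-symplectic. The detour through the normal form $R_\om$ and the isotropy computation is unnecessary, since the paper takes the exactness of $0\to\on{gr}(T^*\Phi)\to R\to\on{gr}(T\Phi)\to 0$ (equivalently, surjectivity of the anchor onto $\on{gr}(T\Phi)$) as the definition of an exact morphism, exactly as your closing parenthetical anticipates.
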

\begin{proof}
	The Dirac morphism $R$ is exact if and only if the anchor map restricts to a surjection $R\to \on{gr}(T\Phi)$. 
	In terms of the identification $\on{gr}(T\Phi)\cong TM$, the range of the anchor map of $R$ 
	equals that of  $R_0\colon \T M\da \dd$. By definition, $M$ is quasi-symplectic 
	if and only if the latter map is surjective. 
\end{proof}

By spelling out the description  of exact Dirac morphisms, see Appendix \ref{app:A}, Equation \eqref{eq:exactmor}, we obtain: 

\begin{proposition}\label{prop:qvaluedmomentmaps}
	Given a splitting $\alpha\in \Omega^1(\Q,\dd)$, with corresponding 3-form $\eta\in \Omega^3(\Q)$,  a quasi-symplectic $\g$-space $M$ with ${\Q}$-valued moment map
	is described by a 
	2-form $\omega\in \Omega^2(M)$ and a $\g$-equivariant map $\Phi\colon M\to {\Q}$ satisfying the properties 
	\begin{itemize}
		\item[(i)] $\d\omega=-\Phi^*\eta$,
		\item[(ii)] $\iota_{\xi_M}\omega=-\Phi^*\l\alpha,\xi\r,\ \ \xi\in\g$,
		\item[(iii)] $\ker(T\Phi)\cap \ker(\omega)=0$.
	\end{itemize} 
\end{proposition}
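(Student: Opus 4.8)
The plan is to translate the Dirac-geometric data into differential forms by means of the two chosen splittings, so that the general description \eqref{eq:exactmor} of an exact Courant morphism applies directly. By Lemma~\ref{lem:exactly}, a quasi-symplectic $\g$-space with $\Q$-valued moment map is precisely an \emph{exact} Dirac morphism $R\colon (\T M,TM)\da(\AA_{\Q},E_{\Q})$ whose composition with $(\AA_{\Q},E_{\Q})\da(\dd,\g)$ is $R_0$. I would equip $\T M$ with its tautological untwisted splitting and present $\AA_{\Q}=\Q\times\dd$ through the splitting $s\colon \Q\times\dd\stackrel{\cong}{\to}\T_{\eta}\Q$ of \eqref{eq:q} determined by $\alpha$. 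Under $s$, the Dirac structure $E_{\Q}=\Q\times\g$ becomes the subbundle spanned by the sections $s(\xi)=\xi_{\Q}+\l\alpha,\xi\r$, $\xi\in\g$. With both exact Courant algebroids now in twisted-standard form, $R$ is, by \eqref{eq:exactmor}, the graph-type Lagrangian relation along $\on{gr}(\Phi)$ determined by a single 2-form $\omega\in\Omega^2(M)$.

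The first task is to write this relation out and read off the three conditions. That the $\omega$-graph be involutive for the two twisted brackets — i.e.\ a genuine Courant \emph{morphism} — is exactly the compatibility of twists $\d\omega=-\Phi^*\eta$, which is (i); the sign reflects the opposite algebroid $\ol{\T M}$ in \eqref{eq:diractstructure} and the vanishing twist on $M$.

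The step I expect to demand the most care is (ii), which packages the $\g$-equivariance of $\Phi$ together with the requirement that $R$ relate the Dirac structures $TM$ and $E_{\Q}$. Equivariance yields $T\Phi(\xi_M)=\xi_{\Q}$ along $\Phi$, so the element $\xi_M+\iota_{\xi_M}\omega\in\T M$ is carried by the graph relation to $\xi_{\Q}+\beta$ for a unique covector $\beta$; demanding that its image lie in $E_{\Q}$ forces it to equal the section $s(\xi)=\xi_{\Q}+\l\alpha,\xi\r$. Matching cotangent parts through $\Phi^*$ and $\omega$ then gives precisely $\iota_{\xi_M}\omega=-\Phi^*\l\alpha,\xi\r$. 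The delicate bookkeeping here is that $E_{\Q}$ sits inside $\T_{\eta}\Q$ as $\{\xi_{\Q}+\l\alpha,\xi\r\}$ rather than as the naive $T\Q$, and that the sign conventions of $\ol{\T M}$ are tracked consistently; one must also confirm that (ii) together with (i) is what makes the composite with $(\AA_{\Q},E_{\Q})\da(\dd,\g)$ equal to the prescribed $R_0$. Finally, the transversality $\ker(T\Phi)\cap\ker(\omega)=0$ of (iii) is the minimal-degeneracy condition singling out the quasi-symplectic case: by Lemma~\ref{lem:exactly} exactness of $R$ is surjectivity of its anchor onto $\on{gr}(T\Phi)$, and a nonzero $X\in\ker(\omega)$ with $T\Phi(X)=0$ would produce an element of $R$ on which the anchor vanishes but which does not come from $TM$, obstructing exactness and the strong Dirac property.

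Each implication is reversible: starting from a $\g$-equivariant $\Phi$ and a 2-form $\omega$ satisfying (i)--(iii), the same formulas define a Lagrangian subbundle $R$ along $\on{gr}(\Phi)$, with (i) giving involutivity, (ii) the intertwining of the Dirac structures and of the $\g$-actions (hence the correct composite $R_0$), and (iii) exactness. This yields the claimed bijection between quasi-symplectic $\g$-spaces with $\Q$-valued moment map and pairs $(\omega,\Phi)$ obeying (i)--(iii).
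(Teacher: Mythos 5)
Your proposal is correct and takes essentially the same route as the paper, which obtains the proposition simply by writing the exact Dirac morphism in the two chosen splittings as $R=\T_\omega\Phi$ (with $\d\omega=\eta_1-\Phi^*\eta_2=-\Phi^*\eta$) and reading off (i)--(iii). One small correction to your account of (iii): any relation of the form $\T_\omega\Phi$ is automatically exact (exactness is what encodes the quasi-symplectic property via Lemma~\ref{lem:exactly}, not condition (iii)); rather, (iii) is exactly the uniqueness clause in the definition of a strong Dirac morphism, namely that $R\cap(E_{\Q}\times TM)$ be the \emph{graph} of a bundle map $\Phi^*E_{\Q}\to TM$, since an element $X\in\ker(\omega)\cap\ker(T\Phi)$ is precisely an element of $TM$ with $X\sim_R 0$.
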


\begin{remarks}\label{rem:div}
	\begin{enumerate}
		\item Changing the splitting by $\varpi$ replaces $\omega$ with $\omega+\Phi^*\varpi$. 
		\item As a consequence of (i),(ii), the 2-form $\omega$ satisfies 
		\[ \L_{\xi_M}\omega=-\Phi^*\big( \d\l\alpha,\xi\r+\iota_{\xi_{\Q}}\eta \big)\]
		for $\xi\in\g$. 
		The right-hand side vanishes precisely when the splitting is $\g$-invariant. See Equation \eqref{eq:equivariance}.
		
		\item\label{it:G} If the $\g$-action on $M$ integrates to a $G$-action, one describes a 
		\emph{quasi-symplectic $G$-space with $\Q$-valued moment map}, by the additional requirement that
		\[ \A_g^*\omega-\omega=\Phi^*\beta(g)\] 
		with the cocycle \eqref{eq:betaq}.
		This property is automatic if $G$ is connected.
	\end{enumerate}	
\end{remarks}

\begin{lemma}
	Condition (iii) may be replaced with
	\[ (iii)'\ \ \ker(\omega_m)=\{\xi_M(m)|\ \xi\in \g,\ \l\alpha_{\Phi(m)},\xi\r=0\},\ \ m\in M.\]
\end{lemma}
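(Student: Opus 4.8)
The plan is to fix $m\in M$, set $q=\Phi(m)$, and write $W_m=\{\xi_M(m)\mid \xi\in\g,\ \l\alpha_q,\xi\r=0\}$ for the right-hand side of (iii)$'$; I work throughout within the hypotheses of Proposition~\ref{prop:qvaluedmomentmaps}, so that (ii) and the $\g$-equivariance of $\Phi$ are available. The first observation is that the inclusion $W_m\subseteq\ker(\omega_m)$ is automatic: if $\xi\in\g$ satisfies $\l\alpha_q,\xi\r=0$, then (ii) gives $\iota_{\xi_M}\omega=-\Phi^*\l\alpha,\xi\r$, whose value at $m$ is $-\l\alpha_q,\xi\r\circ T_m\Phi=0$, so $\xi_M(m)\in\ker(\omega_m)$. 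Hence (iii)$'$ is equivalent to the single inclusion $\ker(\omega_m)\subseteq W_m$, and the task reduces to comparing $\ker(\omega_m)$ with $W_m$.

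The key device is to push everything forward by $T_m\Phi$, where the relevant subspaces become transparent. First I would prove that $T_m\Phi$ is injective on $W_m$: if $\xi\in\g$ with $\l\alpha_q,\xi\r=0$ has $T_m\Phi(\xi_M(m))=\xi_\Q(q)=0$ (by equivariance), then $s(\xi)|_q=\xi_\Q(q)+\l\alpha_q,\xi\r=0$, and since $s$ from \eqref{eq:q} is an isomorphism this forces $\xi=0$, hence $\xi_M(m)=0$. Next I would show $T_m\Phi(\ker(\omega_m))\subseteq T_m\Phi(W_m)$: for $v\in\ker(\omega_m)$ and any $\xi\in\g$, property (ii) gives $\l\alpha_q(T_m\Phi v),\xi\r=(\Phi^*\l\alpha,\xi\r)_m(v)=\omega_m(v,\xi_M(m))=0$, so $\alpha_q(T_m\Phi v)\in\g^\perp=\g$ (here $\g$ being Lagrangian is used). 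Setting $\zeta:=\alpha_q(T_m\Phi v)\in\g$, one has $\zeta_\Q(q)=(\alpha_q(T_m\Phi v))_\Q=T_m\Phi v$, and $\l\alpha_q,\zeta\r=0$ because $\zeta$ lies in the isotropic image $\alpha_q(T_q\Q)$; thus $\zeta_M(m)\in W_m$ and $T_m\Phi v=T_m\Phi(\zeta_M(m))\in T_m\Phi(W_m)$.

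With these three facts the equivalence is a dimension count, noting that (iii) is exactly the statement that $T_m\Phi$ is injective on $\ker(\omega_m)$. If (iii) holds, then $\dim\ker(\omega_m)=\dim T_m\Phi(\ker(\omega_m))\le\dim T_m\Phi(W_m)=\dim W_m\le\dim\ker(\omega_m)$, where the middle equality uses injectivity of $T_m\Phi$ on $W_m$ and the last inequality uses $W_m\subseteq\ker(\omega_m)$; all inequalities collapse to equalities, so $W_m=\ker(\omega_m)$, which is (iii)$'$. Conversely, if (iii)$'$ holds then $\ker(\omega_m)=W_m$, on which $T_m\Phi$ is injective, giving (iii). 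I expect the main obstacle to be the second step above, pinning down that $T_m\Phi$ carries $\ker(\omega_m)$ into the orbit directions $T_m\Phi(W_m)$: this is precisely where (ii) must be combined with the two Lagrangian inputs, namely $\g^\perp=\g$ and the isotropy of the splitting image $\alpha_q(T_q\Q)$, and arranging these correctly is the crux, whereas the surrounding dimension bookkeeping is routine.
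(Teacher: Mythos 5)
Your proof is correct and takes essentially the same route as the paper: the crux in both is to set $\zeta=\alpha_{\Phi(m)}(T_m\Phi(v))$ for $v\in\ker(\omega_m)$, use (ii) together with $\g^\perp=\g$ to see $\zeta\in\g$, and use the isotropy of $\on{ran}(\alpha_{\Phi(m)})$ to get $\l\alpha_{\Phi(m)},\zeta\r=0$. The only cosmetic differences are that the paper finishes by observing $v-\zeta_M(m)\in\ker(\omega_m)\cap\ker(T_m\Phi)=0$ where you run a dimension count, and that for (iii)$'\Rightarrow$(iii) the paper uses the isomorphism $\dd_q\to T_q^*\Q$ where you use the fiberwise isomorphism $s$; these are equivalent.
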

\begin{proof}	Let $q=\Phi(m)$, with stabilizer $\dd_q$. 
	The range of $\alpha|_q$ is a complement to $\dd_q$, and hence is non-singularly paired with $\dd_q$. 	It follows that the map 
	\[ \dd_q\to T^*_q\Q,\ \zeta\mapsto \l\alpha|_q,\zeta\r\] 
	is an isomorphism. 
	Using this observation, we see that 
	Condition (iii)' implies (iii). Conversely, suppose (iii) holds, and let $v\in \ker(\omega_m)$. 
	Let 
	\[ w=T_m\Phi(v),\ \ \xi=\iota_w \alpha_{q}.\]
	The moment map condition (ii) shows that for all 
	$\xi'\in \g$, 
	\[ \l\xi,\xi'\r=\l\iota_w \alpha_{q},\xi'\r=
	\omega(\xi'_M|_m,v)=
	0.\] 
	Hence, $\xi\in\g$. 
	We have $\l\alpha_q,\xi\r=0$, since $\xi\in \on{ran}(\alpha_{q})=\on{ran}(\alpha_{q})^\perp$. 
	Using (ii) again, it follows that 
	$\xi_M|_m\in \ker(\omega_m)$. 
	The difference $v-\xi_M|_m$ lies in $\ker(\omega_m)\cap \ker(T_m\Phi)$, and is hence equal to $0$. This proves (iii)'. 
\end{proof}
As a direct consequence, $\omega$ is nondegenerate at all those points $m\in M$ for which the range of 
$\alpha_{\Phi(m)}$ is a complement to $\g$. Since $\alpha_{\Phi(m)}\colon T_m\Q\to \dd$ defines a splitting of the $\dd$-action, this holds in particular at all points where the stabilizer $\dd_{\Phi(m)}$ is equal to $\g$.


\subsection{Symplectic reduction}
Suppose $M$ is a quasi-symplectic $G$-space with 
moment map $\Phi\colon M\to \Q$, with the associated $G$-equivariant exact  
Dirac morphism 
\[ R\colon (\T M,TM)\da (\AA_{\Q},E_{\Q}).\] 	
(The choice of splitting defines a 2-form $\omega$, 
but the following results do not depend on such a choice.) 

\begin{proposition}\label{prop:locfree}
	On the pre-image of the set of regular values of $\Phi$, the $G$-action is locally free. 
\end{proposition}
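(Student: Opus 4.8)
The plan is to prove local freeness by showing that the infinitesimal stabilizer $\g_m=\{\xi\in\g\mid \xi_M(m)=0\}$ is trivial at every $m$ with $q=\Phi(m)$ a regular value. Since $\g_m$ is the Lie algebra of the isotropy group $G_m$, the vanishing $\g_m=0$ forces $G_m$ to be discrete, which is exactly local freeness at $m$. To compute $\g_m$ I would fix a splitting $\alpha\in\Omega^1(\Q,\dd)$, with the associated $2$-form $\omega$ and moment map conditions of Proposition \ref{prop:qvaluedmomentmaps}; this is harmless because local freeness is a property of the $G$-action and does not depend on the chosen splitting.

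So fix such an $m$ and suppose $\xi\in\g$ with $\xi_M(m)=0$; the goal is $\xi=0$. I would extract two facts about $\xi$. First, by $G$-equivariance of $\Phi$ one has $T_m\Phi(\xi_M(m))=\xi_\Q(q)$, so $\xi_\Q(q)=0$, i.e. $\xi\in\dd_q$. Second, the moment map condition (ii), $\iota_{\xi_M}\omega=-\Phi^*\l\alpha,\xi\r$, evaluated at $m$ has vanishing left-hand side, so $\l\alpha_q,\xi\r$ pulls back to zero under $T_m\Phi$. Here regularity of $q$ enters: $T_m\Phi\colon T_mM\to T_q\Q$ is surjective, whence $\l\alpha_q,\xi\r=0$ as an element of $T_q^*\Q$. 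This says $\xi\perp\on{ran}(\alpha_q)$, and since $\on{ran}(\alpha_q)$ is Lagrangian (as the range of a splitting), $\on{ran}(\alpha_q)^\perp=\on{ran}(\alpha_q)$, giving $\xi\in\on{ran}(\alpha_q)$.

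It then remains to combine these. As recalled in the proof of the lemma following Proposition \ref{prop:qvaluedmomentmaps}, the range of $\alpha_q$ is a Lagrangian complement to the stabilizer $\dd_q$, so $\dd=\dd_q\oplus\on{ran}(\alpha_q)$. Therefore $\xi\in\dd_q\cap\on{ran}(\alpha_q)=0$, as desired.

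The argument is essentially direct bookkeeping with complementary Lagrangians, so there is no deep obstacle; the two points that require care are the translation of ``regular value'' into surjectivity of $T_m\Phi$ (which is what upgrades ``$\l\alpha_q,\xi\r$ vanishes on $\on{ran}(T_m\Phi)$'' to ``$\l\alpha_q,\xi\r=0$''), and the use of the decomposition $\dd=\dd_q\oplus\on{ran}(\alpha_q)$ into complementary Lagrangian subspaces to kill the intersection. I would also remark that the conclusion is manifestly independent of the auxiliary splitting $\alpha$, consistent with the statement that the reduction results do not depend on such a choice.
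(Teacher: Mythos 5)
Your proof is correct, but it takes a different route from the paper's. You work with a chosen splitting $\alpha$ and the induced $2$-form $\omega$, invoking the moment map condition (ii) of Proposition \ref{prop:qvaluedmomentmaps} together with surjectivity of $T_m\Phi$ to conclude $\l\alpha_q,\xi\r=0$, and then kill $\xi$ via the decomposition $\dd=\dd_q\oplus\on{ran}(\alpha_q)$ into complementary Lagrangians. The paper deliberately avoids choosing a splitting (it remarks that the results of that subsection do not depend on such a choice) and argues directly with the exact Courant morphism $R$: for $\xi\in\g_m$ one has $(\xi,0)\in R$, and for arbitrary $\zeta\in\dd$ exactness of $R$ plus surjectivity of $T_m\Phi$ produce a lift $(\zeta',v+\mu)\in R$ with $\zeta-\zeta'\in\dd_{\Phi(m)}$; pairing against $(\xi,0)$ and using that $R$ and $\dd_{\Phi(m)}$ are Lagrangian gives $\l\zeta,\xi\r=0$ for all $\zeta$. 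The underlying linear algebra is the same in both arguments --- one shows $\xi$ is orthogonal to a Lagrangian complement of $\dd_q$ as well as to $\dd_q$ itself --- but the paper's version is intrinsic and splitting-free, while yours is more concrete at the cost of introducing auxiliary data (whose existence is guaranteed since $\AA_\Q$ is exact, and whose irrelevance to the conclusion you correctly note). Both are complete proofs.
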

\begin{proof}
	Let $m\in M$ be such that $T\Phi|_m$ is surjective, and let $\xi\in \g_m$ (the infinitesimal stabilizer of $m$). 
	Then $0_m\sim_R \xi$. By surjectivity  there exists, for every given  $\zeta\in\dd$, some $v\in TM|_m$ with 
	$T\Phi|_m(v)=\zeta_{\Q}|_{\Phi(m)}$. Since $R$ is an exact Courant morphism, the pair 
	$(\zeta_{\Q}|_{\Phi(m)},v)\in \on{gr}(T\Phi)$ 
	lifts to an element 
	\[ (\zeta',v+\mu)\in R\]
	where $\mu\in T^*M|_m$ and with $\zeta-\zeta'\in \dd_{\Phi(m)}$. Since $R$ is Lagrangian, the pairing of this element with $(\xi,0)\in R$ is zero. This gives 
	$\l\zeta',\xi\r=0$. But also $\l \zeta-\zeta',\xi\r=0$, since $\zeta-\zeta',\xi$ are both in the Lagrangian subspace $\dd_{\Phi(m)}$. 
	We conclude $\l \zeta,\xi\r=0$ for all $\zeta\in\dd$, and hence  $\xi=0$. 
\end{proof}

Let $\lambda\in\Q$ be a regular value of $\Phi$, and $G_\lambda$ its stabilizer. 
By Proposition \ref{prop:locfree}, the $G_\lambda$-action on $\Phinv(\lambda)$ is locally free. If the action is free and proper, the quotient is a manifold. 	

\begin{proposition}[Symplectic reduction] Suppose $\lambda$ is a regular value of $\Phi\colon M\to \Q$, and that the 
	$G_\lambda$-action on $\Phinv(\lambda)$ is free and proper. Then 
	\[ M\qu_{\!\lambda}\, G=\Phinv(\lambda)/G_\lambda\] 
	acquires a symplectic 2-form.	
\end{proposition}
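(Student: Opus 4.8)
The plan is to reduce the statement to the standard situation of a closed 2-form that descends to a symplectic form on a quotient, the key point being to verify both closedness and nondegeneracy of the induced form on $\Phinv(\lambda)/G_\lambda$. First I would fix a splitting $\alpha\in\Omega^1(\Q,\dd)$, producing a 2-form $\omega\in\Omega^2(M)$ as in Proposition \ref{prop:qvaluedmomentmaps}; the final symplectic form should not depend on this choice, since changing the splitting by $\varpi$ replaces $\omega$ by $\omega+\Phi^*\varpi$, and $\Phi^*\varpi$ pulls back trivially once we restrict to the level set $\Phinv(\lambda)$ where $\Phi$ is constant. Let $\iota\colon\Phinv(\lambda)\hra M$ denote the inclusion and set $\omega_\lambda=\iota^*\omega$. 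Since $\lambda$ is a regular value, $\Phinv(\lambda)$ is a submanifold, and by Proposition \ref{prop:locfree} the $G_\lambda$-action on it is locally free, so that under the freeness and properness hypothesis the quotient $\Phinv(\lambda)/G_\lambda$ is a manifold with a submersion $q\colon\Phinv(\lambda)\to \Phinv(\lambda)/G_\lambda$.

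Next I would check that $\omega_\lambda$ is $G_\lambda$-basic, i.e.\ horizontal and invariant, so that it descends to a 2-form $\ol\omega$ on the quotient with $q^*\ol\omega=\omega_\lambda$. For horizontality, I use moment map condition (ii): for $\xi\in\g_\lambda$ (the Lie algebra of $G_\lambda$), the vector field $\xi_M$ is tangent to $\Phinv(\lambda)$, and $\iota_{\xi_M}\omega_\lambda=-\iota^*\Phi^*\l\alpha,\xi\r$. Because $\Phi$ is constant equal to $\lambda$ on the level set, $\iota^*\Phi^*$ kills all 1-forms on $\Q$, so this contraction vanishes; thus the orbit directions lie in the kernel of $\omega_\lambda$. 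For invariance I invoke Remark \ref{rem:div}\eqref{it:G}: $\A_g^*\omega-\omega=\Phi^*\beta(g)$, and again $\iota^*\Phi^*\beta(g)=0$ on the level set, giving $\A_g^*\omega_\lambda=\omega_\lambda$ for $g\in G_\lambda$. Closedness of $\ol\omega$ follows from condition (i): $\d\omega_\lambda=-\iota^*\Phi^*\eta=0$, since $\Phi$ is constant on $\Phinv(\lambda)$, and $q^*\d\ol\omega=\d\omega_\lambda=0$ with $q$ a surjective submersion forces $\d\ol\omega=0$.

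The main obstacle is nondegeneracy of $\ol\omega$, which I would extract from condition (iii)$'$. The kernel of $\omega$ at $m\in\Phinv(\lambda)$ consists of the vectors $\xi_M(m)$ with $\xi\in\g$ and $\l\alpha_\lambda,\xi\r=0$. Since $\lambda$ is a regular value, $T_m\Phi$ is surjective, so the range of $\alpha_\lambda$ (a complement to the stabilizer $\dd_\lambda$) together with the tangent directions realizing all of $T_\lambda\Q$ forces the condition $\l\alpha_\lambda,\xi\r=0$ to pin $\xi$ down to $\g_\lambda$: concretely, $\l\alpha_\lambda,\xi\r=0$ means $\xi$ annihilates the range of $\alpha_\lambda$ under the metric, hence pairs trivially with a complement to $\dd_\lambda$, which combined with $\xi\in\g$ places $\xi_M(m)$ in the orbit directions of $G_\lambda$. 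Thus $\ker(\omega_\lambda)$ is contained in the tangent space to the $G_\lambda$-orbit, modulo the further restriction coming from tangency to the level set; a careful dimension count or a direct argument then shows $\ker(\omega_\lambda)$ equals the orbit tangent space $\{\xi_M(m)\mid\xi\in\g_\lambda\}$. Since $q$ collapses exactly these directions, the descended form $\ol\omega$ is nondegenerate, hence symplectic. I expect the delicate step to be verifying that the kernel of $\omega_\lambda$ is \emph{exactly} the orbit tangent space and not larger—this is where regularity of $\lambda$ and the interplay between $\ker(\omega)$, $\ker(T\Phi)$, and the Lagrangian condition on $\dd_\lambda$ must be combined, and I would organize it so that condition (iii)$'$ does most of the work.
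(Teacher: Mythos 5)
Your route differs from the paper's: the paper works splitting-free, restricting the Dirac morphism $R$ to $Z=\Phinv(\lambda)$ to obtain an exact Courant morphism $R_Z\colon \T Z\da 0$, identifying $\ker(R_Z)\cap TZ$ with the $G_\lambda$-orbit directions directly from the Dirac-morphism axioms, and only then reading off the closed 2-form $\omega_Z$. Your splitting-based plan is legitimate in principle, and the parts you actually carry out --- independence of the choice of splitting on the level set, $G_\lambda$-invariance and horizontality of $\iota^*\omega$, and closedness --- are all correct.

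The gap is in the nondegeneracy step, and it is twofold. First, the relevant kernel is that of the \emph{restricted} form $\iota^*\omega$ on $T_mZ$, i.e.\ $\{v\in\ker(T_m\Phi)\colon \omega(v,w)=0\ \forall w\in\ker(T_m\Phi)\}$; this is in general strictly larger than $\ker(\omega_m)\cap T_mZ$, which is all that condition (iii)$'$ lets you compute. In fact $\ker(\omega_m)\cap T_mZ=0$: by (iii)$'$ a nonzero element of $\ker(\omega_m)$ is $\xi_M(m)$ with $\xi\in\g$ and $\l\alpha_\lambda,\xi\r=0$, and the latter condition forces $\xi\in\on{ran}(\alpha_\lambda)^\perp=\on{ran}(\alpha_\lambda)$ (the range is Lagrangian), \emph{not} $\xi\in\dd_\lambda$; since $\on{ran}(\alpha_\lambda)\cap\dd_\lambda=0$, such a $\xi$ is never a nonzero element of $\g_\lambda$. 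So your assertion that $\l\alpha_\lambda,\xi\r=0$ together with $\xi\in\g$ ``places $\xi_M(m)$ in the orbit directions of $G_\lambda$'' is incorrect, and the inclusion $\ker(\iota^*\omega)\subseteq\{\xi_M(m)\mid\xi\in\g_\lambda\}$ --- the actual content of the proposition --- is exactly what remains unproved; the ``careful dimension count or direct argument'' you defer to is the whole point. A correct completion along your lines does exist but needs more input: given $v\in\ker(\iota^*\omega)_m$, regularity of $\lambda$ lets you write $\iota_v\omega=(T_m\Phi)^*\mu$; the isomorphism $\dd_\lambda\to T^*_\lambda\Q$, $\zeta\mapsto\l\alpha_\lambda,\zeta\r$, gives $\mu=-\l\alpha_\lambda,\zeta\r$ for a unique $\zeta\in\dd_\lambda$; evaluating $\omega(v,\xi_M(m))$ in two ways for $\xi\in\g$ yields $\l\zeta,\xi\r=0$, hence $\zeta\in\g^\perp=\g$ and so $\zeta\in\g_\lambda$; finally $v-\zeta_M(m)\in\ker(\omega_m)\cap\ker(T_m\Phi)=0$ by (ii) and (iii). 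None of these steps appear in your proposal, so as written the proof is incomplete.
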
	
\begin{proof}
	Let $Z=\Phinv(\lambda)$. 
	Denote by 
	$i_Z\colon Z\to M$ and $i_\lambda\colon \{\lambda\}\to \Q$ the inclusions. 
	The pullback Courant algebroids are 
	$i_Z^!\T M=\T Z$ and $i_\lambda^!\AA_\Q=0$, and $R$ defines a $G_\lambda$-equivariant exact Courant morphism 
	\[ R_Z\colon \T Z\da 0.\] 
	Explicitly, for  $v+\nu\in \T Z$ we have 
	\begin{equation}\label{eq:explicit1} 
	v+\nu\sim_{R_Z} 0 \Leftrightarrow \exists \mu\in T^*M|_Z,\ \zeta\in \dd_\lambda\colon \nu=i_Z^*\mu,\ v+\mu\sim_R\zeta.\end{equation}
	We claim that the intersection $\ker(R_Z)\cap TZ$ consists exactly of the $G_\lambda$-orbit directions. By \eqref{eq:explicit1}, a tangent vector $v\in TZ$ satisfies $v\sim_{R_Z}0$ if and only if there exists $\xi\in \dd_\lambda$ 
	with $v\sim_R\xi$. Since $R$ is a Dirac morphism, the forward image of $TM|_m$ under $R$ equals $\g$; hence 
	$\xi\in \dd_\lambda\cap \g=\g_\lambda$, and by the uniqueness condition for Dirac morphisms  we have 
	$v=\xi_M|_m$. This proves the claim. 
	Since $\T Z$ has a distinguished splitting, the Courant morphism  
	$R_Z$ is equivalent to a $G_\lambda$-invariant, closed 2-form $\omega_Z\in \Omega^2(Z)$, with kernel $\ker(\omega_Z)=
	\ker(R_Z)\cap TZ$. It descends to a symplectic 2-form on $Z/G_\lambda=M\qu_{\!\lambda}\, G$.
\end{proof}

\begin{remark} The assumption that 
	$\lambda$ is a regular value of the moment map is equivalent to $\Phi$ being transverse to the orbit $\O=G.\lambda$. The reduced space 
	$M\qu_{\!\lambda}\, G$ is canonically identified with 
	\[ M\qu_{\!\O}\, G=\Phinv(\O)/G.\]
\end{remark}
\begin{remark}
	The proposition (and its proof) generalizes, as follows: 	
	Suppose $(\mathbb{B},F)$ is a Dirac structure over another manifold $N$, and 
	\[ R\colon (\T M,TM)\da (\AA_{\Q},E_{\Q})\times (\mathbb{B},F)\]
	is a $G$-equivariant Dirac morphism with base map $(\Phi,\Psi)\colon M\to \Q\times N$ (where the $G$-action on $\mathbb{B}\to N$
	is trivial). By the same argument as above, the action on the pre-image of the set of regular values of $\Phi$ is locally free. If $\lambda$ is a regular value of $\Phi$, and the action of $G_\lambda$ on $\Phinv(\lambda)$ is free and proper, 
	then $R$ descends to a Dirac morphism 
	\[ R\qu_{\!\lambda}\, G\colon (\T (M\qu_\lambda G),\ T (M\qu_{\!\lambda}\, G))\da (\mathbb{B},F).\]
	For instance, given two Manin pairs $(\dd_i,\g_i)$ with moment map targets $\Q_i$, and  
	given a quasi-symplectic $G=G_1\times G_2$-space with $\Q=\Q_1\times \Q_2$-valued moment map, one may reduce with respect to just the $G_1$-action and obtain a quasi-symplectic $G_2$-space. 
\end{remark}

\section{Integration of moment map targets}\label{sec:integration}

\subsection{Quasi-symplectic groupoids}\label{subsec:groupoid}
The notion of a quasi-symplectic group\-oid integrating an exact Dirac structure was introduced in \cite{bur:int} and 
\cite{xu:mor}, independently, using splittings of the Courant algebroid. (Reference \cite{bur:int} uses the terminology of \emph{pre-symplectic groupoid}.) 
The following formulation in terms of Dirac morphisms may be found in \cite{bur:cou}; see also 
\cite[Remark 6.2.1]{lib:th} and \cite{igl:uni}.

Let $(\AA,E)$ be a Dirac structure (not necessarily exact) over a manifold $M$. 
A \emph{quasi-Poisson groupoid} integrating this Dirac structure is a Lie groupoid $\G\rra M$ integrating the Lie algebroid $E\Rightarrow M$, together with a multiplicative Dirac morphism 
\begin{equation}\label{eq:K}
 K\colon (\T\G,T\G)\da (\AA,E)\times (\AA,E)^{\on{op}},\end{equation}
with base map $(\tz,\sz)\colon \G\to M\times M$. Here, multiplicativity means that 
$K\subset \AA\times \ol{\AA}\times \ol{\T\G}$ is a subgroupoid along the graph of the base map $(\tz,\sz)$, for the standard groupoid structure $\T\G\rra TM\oplus \on{Lie}(\G)^*$ and 
the pair groupoid structure $\AA\times \ol{\AA}\rra \AA$. We speak of a \emph{quasi-symplectic groupoid} if $\AA$ and the 
morphism $K$ are exact. 

\begin{example}
Let $(\dd,\g)$ be a $G$-equivariant Manin pair, regarded as a  Dirac structure over $M=\pt$.
A quasi-Poisson groupoid integrating this Dirac structure is given by the morphism $K\colon \T G\da (\dd,\g)\times (\dd,\g)^{\on{op}}$, 
where 
$K\subset \dd\times\ol{\dd}\times \ol{\TG}$ 
is spanned by all $(\xi,\xi',(\xi')^L-\xi^R)$ with $\xi,\xi'\in \g$, together with all $(\Ad_g\zeta,\zeta,-\theta^L_G\cdot\zeta)$ with $\zeta\in\dd$.   
\end{example}

We are mainly interested in the special case that the Courant algebroid $\AA$ and the morphism $K$ are \emph{exact}.  
Given a splitting of $\AA$, with associated 3-form $\eta$, such a morphism is described by a 
\emph{multiplicative} 2-form $\omega\in \Omega^2(\G)$ with $\d\omega=\sz^*\eta-\tz^*\eta$, 
satisfying the minimal degeneracy condition 
\begin{equation}\label{eq:minimaldeg}
 \ker(\omega)\cap\ker(T\tz)\cap\ker(T\sz)=0\end{equation}
and a moment map property.  The theory in  \cite{bur:int} shows that whenever the Lie algebroid $E$ admits a source-simply connected integration $\G$, then the latter has a canonical structure as quasi-symplectic groupoid. 

\subsection{Integration of moment map targets $\Q$}
Let $\Q$ be a moment map target for the Manin pair $(\dd,\g)$. 
Given a splitting $\alpha\in \Omega^1(\Q,\dd)$, let 
\begin{equation}\label{eq:chi}
 \chi\in C^\infty(\Q,\dd^*\otimes \dd^*),\ \ 
\chi(\zeta_1,\zeta_2)=\l\iota_{(\zeta_1)_{\Q}}\alpha,\zeta_2\r.\end{equation}
This satisfies 
\[ \chi(\zeta_1,\zeta_2)+\chi(\zeta_2,\zeta_1)=\l\zeta_1,\zeta_2\r;\]
in particular, the restriction to a bilinear form on $\g$ is skew-symmetric. 
Let 
$\beta$ be the cocycle \eqref{eq:betaq} regarded as a 2-form of bidegree $(0,2)$ on $G\times \Q$.
We denote by 
 $\A_g\in \on{Diff}(\Q)$ the action of $g\in G$; pullbacks of differential forms under $\A_g$ may be regarded as families of 
 forms on $\Q$ (depending on 
 $g\in G$ as a parameter), hence as forms on $G\times \ca{Q}$.

 %
\begin{theorem}\label{th:groupoid}
Let $\Q$ be a $G$-equivariant moment map target for the Manin pair $(\dd,\g)$. Then the action groupoid 
\[ \G_{\Q}=G\ltimes \Q\rra \Q\] 
is canonically a quasi-symplectic groupoid integrating the Dirac structure $(\AA_{\Q},E_{\Q})$.  For a given splitting $\alpha\in \Omega^1(\Q,\dd)$, 
the quasi-symplectic 2-form on $\G_Q$ is given by  
\[ \omega=\A_g^*\left(-\l \alpha,\theta^R\r+\hh \chi(\theta^R,\theta^R)\right)+\beta.\]
\end{theorem}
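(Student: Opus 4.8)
The plan is to verify directly that the action groupoid $\G_\Q = G\ltimes\Q$, which integrates the Lie algebroid $E_\Q = \Q\times\g$ in the standard way, carries a multiplicative $2$-form $\omega$ satisfying the three defining conditions of a quasi-symplectic groupoid: the closedness/twist relation $\d\omega = \sz^*\eta - \tz^*\eta$, the minimal degeneracy condition \eqref{eq:minimaldeg}, and the moment map property. Here the source and target maps are $\sz(g,q)=q$ and $\tz(g,q)=\A_g\cdot q$, and $\theta^R$ denotes the right-invariant Maurer–Cartan form on the $G$-factor. The first step is to write $\omega$ explicitly and unpack the three pieces: the term $\A_g^*(-\l\alpha,\theta^R\r)$ is of bidegree $(1,1)$ on $G\times\Q$, the term $\A_g^*\big(\hh\chi(\theta^R,\theta^R)\big)$ is of bidegree $(2,0)$, and $\beta$ is the cocycle of \eqref{eq:betaq} viewed as a $(0,2)$-form. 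The symmetry relation $\chi(\zeta_1,\zeta_2)+\chi(\zeta_2,\zeta_1)=\l\zeta_1,\zeta_2\r$ recorded after \eqref{eq:chi} guarantees that $\chi(\theta^R,\theta^R)$ picks out the skew part and is well-defined.

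The second step is to compute $\d\omega$. I would differentiate each piece using the Maurer–Cartan structure equation $\d\theta^R = \hh[\theta^R,\theta^R]$, the definition \eqref{eq:chi} of $\chi$, the cocycle identity $\d\beta(g) = -\A_g^*\eta + \eta$ from \eqref{eq:betaq1}, and the splitting formula for $\eta$, namely $\eta = -\hh\l\d\alpha,\alpha\r - \tf16\l[\alpha,\alpha],\alpha\r$. The aim is to show all internal terms cancel, leaving exactly $\sz^*\eta - \tz^*\eta = \eta - \A_g^*\eta$; the $(0,2)$-part of this equation is precisely \eqref{eq:betaq1}, so the real content is matching the $(1,2)$, $(2,1)$, and $(3,0)$ bidegrees. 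The moment map property and multiplicativity should follow from the same ingredients: multiplicativity amounts to checking compatibility of $\omega$ with the groupoid multiplication $(g_2,\A_{g_2}q)\cdot(g_1,q)=(g_2 g_1, q)$, which reduces to the cocycle property $\beta(hg)=\beta(g)+\A_g^*\beta(h)$ together with the right-invariance built into $\theta^R$.

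The third step is minimal degeneracy. I would apply the criterion (iii)$'$ logic used in the quasi-symplectic characterization: since $\ker(T\tz)\cap\ker(T\sz)$ at a point $(g,q)$ consists of the $G$-orbit directions transverse to both projections, I would show that a vector annihilated by $\omega$ and lying in this intersection must vanish, using that $\alpha_q$ defines a splitting of the $\dd$-action and hence $\l\alpha_q,\cdot\r$ restricts to a nondegenerate pairing on the stabilizer $\dd_q$. This mirrors the argument in the proof of Lemma following Proposition \ref{prop:qvaluedmomentmaps}.

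The hard part will be the bidegree bookkeeping in $\d\omega$. The interaction between the $\A_g^*$-pullback (which introduces extra $G$-directions when differentiated) and the Lie-algebraic terms $[\alpha,\alpha]$, $[\theta^R,\theta^R]$ is where sign errors and miscounted cross-terms are most likely, and where one genuinely needs the precise form of $\eta$ and the $\dd$-equivariance relation \eqref{eq:interm}. An alternative, cleaner route — which I would pursue if the direct computation becomes unwieldy — is to appeal to the general theory of \cite{bur:int}: the source-simply-connected integration of $E_\Q$ inherits a canonical quasi-symplectic structure, and here that integration is (a cover of) $\G_\Q$; one then only needs to identify the induced $2$-form with the stated formula by matching Lie-algebroid data, i.e. checking that the van Est–type differentiation of $\omega$ recovers the infinitesimal cocycle $\d\l\alpha,\xi\r+\iota_{\xi_\Q}\eta$ of Lemma \ref{lem:hard}. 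This reduces the global verification to the already-established infinitesimal identity.
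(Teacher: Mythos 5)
Your overall architecture (moment map identities, multiplicativity, minimal degeneracy, and the twist relation $\d\omega=\sz^*\eta-\tz^*\eta$) matches the paper's, and your treatments of minimal degeneracy and of the moment map property are essentially the ones used there: contracting with $(-\xi^R,0)$ gives $-\tz^*\l\alpha,\xi\r$, and nondegeneracy of the pairing between the Lagrangian $\on{ran}(\alpha_q)$ and the Lagrangian stabilizer $\dd_q$ forces $\xi=0$. The genuine divergence is in how $\d\omega=\sz^*\eta-\tz^*\eta$ is established. You propose a direct bidegree-by-bidegree expansion of $\d\omega$ using the Maurer--Cartan equation and the formula for $\eta$ in terms of $\alpha$, and you correctly flag this as the hard part --- but you do not carry it out, and this is exactly where the content of the theorem lives. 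The paper sidesteps the brute-force computation entirely: it first proves multiplicativity (which needs only the contraction identities already established plus the cocycle property of $\beta$, since those contractions reduce $\delta\omega$ to its tridegree $(0,0,2)$ component), then restricts $\delta\omega=0$ to a slice $\{h\}\times(G\times\Q)$ to obtain $\L(-\xi^R)\omega=-\tz^*\tfrac{d}{dt}\big|_{t=0}\beta(\exp(t\xi))$, and finally combines Cartan's formula with Lemma \ref{lem:hard} to read off $\iota(-\xi^R)\d\omega=-\tz^*\iota_{\xi_{\Q}}\eta$; the remaining bidegree $(0,3)$ part is exactly \eqref{eq:betaq1}. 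Note that the order of operations matters: multiplicativity must be proved before the exterior-derivative computation for this shortcut to be available, whereas your plan computes $\d\omega$ second and multiplicativity afterwards, which commits you to the unexecuted direct expansion.

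Your fallback route via the general integration theory of \cite{bur:int} has a concrete gap: that theory produces the canonical quasi-symplectic structure on the source-simply-connected integration, and identifying it with the stated formula on $\G_{\Q}=G\ltimes\Q$ requires $G$ to be connected (and, for the explicit formula cited from \cite{bur:int}, the splitting to be $G$-invariant). Theorem \ref{th:groupoid} imposes no connectivity hypothesis on $G$ --- this is precisely why the paper supplies a self-contained proof in Appendix \ref{app:C} --- so the van Est / Lie-algebroid matching argument would at best recover the result on the identity component and would still leave the disconnected case, as well as non-invariant splittings, to a direct verification of the kind you were trying to avoid.
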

For the case that $G$ is 1-connected and the splitting is $G$-equivariant, this is proved in  
\cite[Proposition 6.10]{bur:int}. The subsequent \cite[Remark 6.11]{bur:int} explains how to generalize the 
formula to non-equivariant splittings. For the convenience of the reader, and since we do not impose any connectivity assumptions on $G$, we provide a  self-contained proof  in Appendix \ref{app:C}.

\section{Quotients and lifts of quasi-Poisson spaces}\label{sec:quotientlifts}

\subsection{Quotients of moment map targets}\label{subsec:quotients}
Let $(\dd,\g)$ be a Manin pair. Suppose 
 $\cc\subset \dd$ is a Lie subalgebra containing $\g$. Then the orthogonal space 
$\k=\cc^\perp$ is an ideal in $\cc$, hence also in $\g$, and 
we obtain a reduced Manin pair 
\[ (\dd',\g')=(\cc/\k,\g/\k).\] 

For reduction of \emph{$G$-equivariant} Manin pairs $(\dd,\g)$, we assume that $\k$ integrates to a closed normal 
 subgroup $K\subset G$. (Normality is automatic if $K$ is connected.) Then 
 $\k$ and $\cc=\k^\perp$ are   
$\Ad_G$-invariant, and hence $(\dd',\g')$ is a $G'=G/K$-equivariant Manin pair. 
Suppose $\Q$ is a $G$-equivariant moment map target for $(\dd,\g)$, 
and that the action of $K\subset G$ on $\Q$ is free and proper.
Then the $\cc\subset \dd$-action on $\Q$ descends to a $\dd'$-action on 
the quotient 
\[ \Q'=\Q/K.\]
\begin{lemma}
The $\dd'$-action on $\Q'$ is transitive, with Lagrangian stabilizers. 
\end{lemma}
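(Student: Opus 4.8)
The plan is to reduce the whole statement to a pointwise computation in linear algebra at a single point, using only one genuine input from the hypotheses. Fix $q\in\Q$ with image $q'=p(q)$ under the quotient map $p\colon\Q\to\Q'=\Q/K$, and let $\dd_q\subset\dd$ be the (Lagrangian) stabilizer of the $\dd$-action at $q$, i.e.\ the kernel of the evaluation map $\dd\to T_q\Q,\ \zeta\mapsto\zeta_\Q|_q$. The relevant structural facts are that $\k=\cc^\perp\subset\g\subset\cc$ (since $\g$ is Lagrangian, $\cc^\perp\subset\g^\perp=\g$), that $\cc$ is coisotropic, and that $\k=\cc^\perp$ is automatically isotropic. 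The single non-formal ingredient is that the $K$-action is free, which infinitesimally says that the stabilizer of the $\k$-action at $q$ is trivial, namely $\k\cap\dd_q=0$; everything else is bookkeeping driven by this equality.

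First I would extract its two consequences. Taking orthogonals, $(\cc+\dd_q)^\perp=\cc^\perp\cap\dd_q^\perp=\k\cap\dd_q=0$, so $\cc+\dd_q=\dd$, and counting dimensions gives $\dim(\cc\cap\dd_q)=\dim\cc+\dim\dd_q-\dim\dd=\dim\g-\dim\k$. Next I would identify $\ker(T_qp)$ with the tangent space to the $K$-orbit, namely $\{\eta_\Q|_q\mid\eta\in\k\}$, which has dimension $\dim\k$ by freeness. These are the only geometric facts needed; the descent of the $\cc$-action to $\Q'$ is already granted before the statement.

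For the stabilizer, the infinitesimal $\dd'$-action at $q'$ is the map $\cc/\k\to T_{q'}\Q'$ sending $\bar\xi$ (with $\xi\in\cc$) to $T_qp(\xi_\Q|_q)$, and $\bar\xi$ lies in its kernel $\dd'_{q'}$ exactly when $\xi_\Q|_q\in\ker(T_qp)$, i.e.\ when $\xi\in\cc\cap(\dd_q+\k)$. By the modular law (as $\k\subset\cc$) this equals $(\cc\cap\dd_q)+\k$, so the projection $\cc\to\cc/\k$ restricts to an isomorphism from $\cc\cap\dd_q$ onto $\dd'_{q'}$, the restriction being injective because $(\cc\cap\dd_q)\cap\k=\dd_q\cap\k=0$. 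Hence $\dim\dd'_{q'}=\dim\g-\dim\k=\dim\g'$, and since the induced metric on $\cc/\k$ satisfies $\langle\bar x,\bar y\rangle=\langle x,y\rangle$ for $x,y\in\cc$, isotropy of $\dd_q$ forces $\dd'_{q'}$ to be isotropic; being of half the dimension of $\dd'=\cc/\k$, it is Lagrangian. Transitivity then comes for free by rank–nullity: the image of $\cc/\k\to T_{q'}\Q'$ has dimension $\dim\dd'-\dim\dd'_{q'}=(2\dim\g-2\dim\k)-(\dim\g-\dim\k)=\dim\g-\dim\k=\dim\Q'$, so the infinitesimal action is surjective at every $q'$, which is the required transitivity. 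The only real obstacle is keeping the identifications honest — in particular that $\ker(T_qp)$ is precisely the image of $\k$ (this is where freeness of $K$ enters) — after which every dimension count collapses to the single equality $\k\cap\dd_q=0$.
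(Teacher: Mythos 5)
Your proof is correct and follows essentially the same route as the paper's: the single input is that freeness of the $K$-action gives $\dd_q\cap\k=0$, from which $\dd_q+\cc=\dd$ follows by taking orthogonals, and the stabilizer $\dd'_{q'}$ is identified with (the image of) $\cc\cap\dd_q$ and seen to be Lagrangian by a dimension count. The only cosmetic difference is that you compute the stabilizer exactly via the modular law and then deduce transitivity by rank--nullity, whereas the paper deduces transitivity directly from $\dd_q+\cc=\dd$ and then asserts the stabilizer equals the Lagrangian subspace for dimension reasons.
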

\begin{proof}
By definition, the $\dd$-action on $\Q$ is transitive, with Lagrangian stabilizers. Since the $K$-action is free, 
we have $\dd_q\cap \k=0$ for all $q\in \Q$. Taking orthogonals, this shows that $\dd_q+\cc=\dd$, so that already the 
action of $\cc$ is transitive. It follows that the induced $\dd'$-action on $\Q'$ is transitive. 
The stabilizer $\dd'_{qK}$ contains the Lagrangian subspace 
$(\dd_q\cap\cc)/(\dd_q\cap \k)$, and in fact has to be equal for dimension reasons. 
\end{proof}

In conclusion, $\ca{Q}'$ becomes a $G'=G/K$-equivariant moment map target 
for $(\dd',\g')$. 
As before, we denote 
\[ (\AA_{\Q},E_{\Q})=(\Q\times\dd,\Q\times \g)\] 
and similarly for $\Q'$.

\begin{proposition}\label{prop:targetreduction}
	The Dirac structure $(\AA_{\Q'},E_{\Q'})$ is the reduction of  $(\AA_\Q,E_\Q)$ 
	under the action of $K$.
\end{proposition}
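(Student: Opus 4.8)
The plan is to unwind the definition of Courant/Dirac reduction (as recalled in the appendices) and apply it to the action Courant algebroid $\AA_\Q=\Q\times\dd$, using the isotropic subbundle generated by the $\k$-action. Set $A=\Q\times\k\subset\AA_\Q$, the subbundle spanned by the constant sections $\xi\in\k$. First I would record that $A$ satisfies the hypotheses for reduction. It is $K$-invariant; it is isotropic, since $\k\subset\g$ — indeed $\cc\supset\g=\g^\perp\supset\cc^\perp=\k$, so $\k\subset\g$, and $\g$ is Lagrangian — whence also $\k\subset\g\subset\cc$; it is spanned by sections closed under the Courant bracket, because $\Cour{\xi,\xi'}=[\xi,\xi']_\dd$ for constant sections and $\k$ is a Lie subalgebra; and its anchor $\xi\mapsto\xi_\Q$ has image the bundle tangent to the $K$-orbits, i.e.\ the vertical bundle of the principal $K$-bundle $\Q\to\Q'$, since the $K$-action is free and proper.

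Next I would compute the subquotient. With respect to the constant metric on $\dd$, the orthogonal is $A^\perp=\Q\times\k^\perp=\Q\times\cc$, using $\k^\perp=\cc$. Thus $A\subset A^\perp$, and
\[ A^\perp/A=\Q\times(\cc/\k)=\Q\times\dd'. \]
The induced Courant-algebroid structure over $\Q$ has metric the one induced by $\dd$ on $\dd'=\cc/\k$ (well defined because $\k=\cc^\perp$), anchor induced from $\zeta\mapsto\zeta_\Q$, and bracket induced from that of $\cc$: since $\cc$ is a subalgebra, constant sections $\zeta_1,\zeta_2\in\cc$ satisfy $\Cour{\zeta_1,\zeta_2}=[\zeta_1,\zeta_2]_\dd\in\cc$, which descends to $[\zeta_1',\zeta_2']_{\dd'}$.

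To descend to $\Q'=\Q/K$, the reduced vector bundle is $(A^\perp/A)/K=\Q\times_K\dd'$. The key observation here is that the $K$-action on $\dd'$ is \emph{trivial}: since $\k$ is an ideal in $\cc$, for $\xi\in\k$ the operator $\ad_\xi$ maps $\cc$ into $\k$ and hence vanishes on $\dd'=\cc/\k$, so the infinitesimal action of $\k$ on $\dd'$ is zero (this triviality is exactly what makes $(\dd',\g')$ a $G'=G/K$-equivariant Manin pair). Therefore $\Q\times_K\dd'=\Q'\times\dd'$ canonically, and the reduced metric, anchor and bracket are precisely those of the action Courant algebroid $\AA_{\Q'}=\Q'\times\dd'$ for the $\dd'$-action on $\Q'$. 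For the Dirac structure one has $A=\Q\times\k\subset E_\Q=\Q\times\g\subset A^\perp=\Q\times\cc$ (using $\k\subset\g\subset\cc$), so $E_\Q$ reduces to $E_\Q/A=\Q\times(\g/\k)=\Q\times\g'$, which descends to $E_{\Q'}=\Q'\times\g'$.

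The main obstacle is the identification of the reduced bundle with the \emph{trivial} bundle $\Q'\times\dd'$: this is where the hypothesis that $\k$ is an ideal in $\cc$, rather than merely a subalgebra, enters essentially, through the triviality of the $K$-action on $\dd'$. Once this is in place, matching the reduced Courant bracket with the action bracket is routine: I would verify it on the constant sections $\zeta'\in\dd'$ — lifting to $\zeta\in\cc$, bracketing in $\AA_\Q$, and projecting modulo $\k$, which is well defined since $\cc$ is closed under the bracket and $\k$ is an ideal — and then extend by the Leibniz rule, noting that the anchor descends to the $\dd'$-action on $\Q'$ by construction.
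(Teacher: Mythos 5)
Your proof is correct and follows essentially the same route as the paper's: identify the generators of the $K$-action with the constant sections of $\k$, compute $\on{ran}(\varrho)^\perp=\Q\times\cc$, form the subquotient $\Q\times(\cc/\k)$, descend by $K$, and match the bracket on constant sections. You additionally spell out a point the paper leaves implicit, namely that the $K$-action on $\dd'=\cc/\k$ is trivial (via the ideal property of $\k$ in $\cc$), which is what identifies $(\Q\times\dd')/K$ with the trivial bundle $\Q'\times\dd'$; this is a worthwhile clarification but not a different argument.
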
 
\begin{proof}
	The $K$-action on $\AA_\Q$  has isotropic generators  
	$\varrho\colon \k\to \Gamma(\AA_\Q)$
	given by the constant sections, $\xi\mapsto \xi$. Thus, 
	$\on{ran}(\varrho)^\perp=\Q\times \cc$, and the reduced Courant algebroid is 
	\[  \f{\on{ran}(\varrho)^\perp}{\on{ran}(\varrho)}\Big/K=\f{\Q\times \cc}{\Q\times \k}\Big/K=\Q'\times \dd'=\AA_{\Q'}.\] 
	The constant sections of $\AA_{\Q'}$ defined by elements 
	$\zeta'\in\dd'$ are the images of the constant sections 
	of $\AA_\Q$ defined by any choice of lift $\zeta$; hence the Courant bracket of constant sections is simply the given Lie bracket on $\dd'$.  We have $\on{ran}(\varrho)\subset E_\Q$, hence 
	the image of  $E_\Q\subset \on{ran}(\varrho)^\perp$ under the reduction is $E_{\Q'}$. 
\end{proof}

The choice of a principal connection  gives a method for descending splittings.  

\begin{proposition}
	Suppose $\alpha\in \Omega^1(\Q,\dd)$ is a $K$-invariant splitting of $\Q\times \dd$, and $\theta\in \Omega^1(\Q,\k)$ a principal connection. Let $\alpha^\varpi$ be the twist of $\alpha$ by the 2-form 
	\[ \varpi=-\l\alpha,\theta\r+\hh \chi(\theta,\theta)\]
	where $\chi$ is defined in \eqref{eq:chi}. 
	Then $\alpha^\varpi$ takes values in $\cc$, and its projection $\pr_{\dd'}\alpha^\varpi$ descends to a splitting $\alpha'\in
	\Omega^1(\Q',\dd')$. 
\end{proposition}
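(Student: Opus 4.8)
The plan is to verify the three conditions under which a $\dd'$-valued $1$-form on $\Q$ descends along the principal $K$-bundle $\pi\colon\Q\to\Q'=\Q/K$: that $\alpha^\varpi$ is valued in $\cc=\k^\perp$ (so that $\pr_{\dd'}\alpha^\varpi$ is defined), that $\pr_{\dd'}\alpha^\varpi$ is horizontal, and that it is $K$-invariant. Throughout I would use the twisting formula \eqref{eq:alphavarpi} in the evaluated form $\l\alpha^\varpi(v),\zeta\r=\l\alpha(v),\zeta\r-\varpi(\zeta_{\Q},v)$, the defining property $\iota_{\xi_{\Q}}\theta=\xi$ of the connection for $\xi\in\k$, and the two structural facts special to this situation: $\k=\cc^\perp\subset\g$ is isotropic, and $\chi$ obeys $\chi(\zeta_1,\zeta_2)+\chi(\zeta_2,\zeta_1)=\l\zeta_1,\zeta_2\r$ together with $\chi(\zeta_1,\zeta_2)=\l\iota_{(\zeta_1)_{\Q}}\alpha,\zeta_2\r$ from \eqref{eq:chi}.

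For the first two conditions I would evaluate the contraction $\varpi(\zeta_{\Q},v)$ entering the twist. Pairing $\alpha^\varpi$ against $\xi\in\k$ and using $\theta(\xi_{\Q})=\xi$ together with the isotropy $\l\theta(v),\xi\r=0$ (both lie in $\k$), the two pieces of $\varpi=-\l\alpha,\theta\r+\hh\chi(\theta,\theta)$ combine, via the $\chi$-identity, to give $\varpi(\xi_{\Q},v)=\l\alpha(v),\xi\r$; hence $\l\alpha^\varpi(v),\xi\r=0$ and $\alpha^\varpi$ is $\cc$-valued. An entirely parallel evaluation of $\varpi(\zeta_{\Q},\xi_{\Q})$ for $\zeta\in\cc$, $\xi\in\k$ collapses to $-\chi(\zeta,\xi)$, so that $\l\alpha^\varpi(\xi_{\Q}),\zeta\r=\chi(\xi,\zeta)+\chi(\zeta,\xi)=\l\xi,\zeta\r=0$ because $\xi\in\k$ and $\zeta\in\cc=\k^\perp$. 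Thus $\iota_{\xi_{\Q}}\alpha^\varpi\in\k$, i.e.\ $\pr_{\dd'}\alpha^\varpi$ is horizontal.

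For $K$-invariance the cleanest route is to show that the correction $2$-form $\varpi$ is itself $K$-invariant, $\A_k^*\varpi=\varpi$ for all $k\in K$. For a $K$-invariant splitting one has $\A_k^*\alpha=\Ad_k\alpha$, while the principal connection for the left $K$-action satisfies $\A_k^*\theta=\Ad_k\theta$; from the former one reads off the equivariance $(\A_k^*\chi)(\zeta_1,\zeta_2)=\chi(\Ad_{k^{-1}}\zeta_1,\Ad_{k^{-1}}\zeta_2)$. Substituting these and using $\Ad$-invariance of the metric, the terms $\l\alpha,\theta\r$ and $\chi(\theta,\theta)$ are each sent to themselves, so $\A_k^*\varpi=\varpi$. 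Applying $\A_k^*$ to the twist formula, and using that the $K$-action on $\dd'=\cc/\k$ is trivial (so $\Ad_{k^{-1}}\zeta\equiv\zeta\pmod\k$ for $\zeta\in\cc$) together with $\alpha^\varpi$ being $\cc$-valued and hence orthogonal to $\k$, one finds that $\A_k^*\alpha^\varpi-\alpha^\varpi$ is $\k$-valued; therefore $\pr_{\dd'}\alpha^\varpi$ is $K$-invariant and, being horizontal, basic. Finally I would note that twisting sends splittings to splittings, so $\alpha^\varpi$ is again an isotropic section of the $\dd$-action; since the metric on $\dd$ descends to $\cc/\k=\dd'$ and $\pi$ intertwines fundamental vector fields $\zeta_{\Q}\mapsto(\pr_{\dd'}\zeta)_{\Q'}$, the descended form $\alpha'$ is an isotropic section of the $\dd'$-action, i.e.\ a splitting.

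The main obstacle is the precise bookkeeping behind the cancellations: the specific coefficients in $\varpi=-\l\alpha,\theta\r+\hh\chi(\theta,\theta)$ are exactly what is needed for the $\chi$-identity to convert the various $\l\iota_{\bullet}\alpha,\cdot\r$ terms into metric pairings that then vanish by the isotropy of $\k\subset\g$ or by $\k=\cc^\perp$. Keeping the two adjoint conventions consistent—$\A_k^*\alpha=\Ad_k\alpha$ for the splitting and $\A_k^*\theta=\Ad_k\theta$ for the connection of the left $K$-action—is essential, since a mismatched sign would spoil the $K$-invariance of $\varpi$.
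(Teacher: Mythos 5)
Your proposal is correct and follows essentially the same route as the paper: contracting $\varpi$ with fundamental vector fields and using the identity $\chi(\zeta_1,\zeta_2)+\chi(\zeta_2,\zeta_1)=\l\zeta_1,\zeta_2\r$ together with the isotropy of $\k\subset\g$ and $\k=\cc^\perp$ to establish that $\alpha^\varpi$ is $\cc$-valued and that $\iota_{\xi_\Q}\pr_{\dd'}\alpha^\varpi=0$ for $\xi\in\k$. The only difference is that you spell out the $K$-invariance of $\varpi$ and the splitting property of $\alpha'$, which the paper treats as immediate consequences of the $K$-invariance of $\alpha$, the equivariance of $\theta$, and the fact that twisting preserves splittings.
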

\begin{proof}Contraction of $\varpi$ with $\zeta_\Q$ for 
	$\zeta\in\dd$ gives 
	\[ \iota_{\zeta_{\Q}}\varpi=\l\alpha,\iota_{\zeta_{\Q}}\theta\r-\chi(\zeta-\iota_{\zeta_{\Q}}\theta,\theta).\]
	By definition of the  twisted splitting (see \eqref{eq:alphavarpi}), this gives 
	\[
	\l\alpha^\varpi,\zeta\r=\l\alpha,\zeta-\iota_{\zeta_{\Q}}\theta\r+\chi(\zeta-\iota_{\zeta_{\Q}}\theta,\theta)
	\]
	Note that this expression vanishes if $\zeta\in\k$; hence $\alpha^\varpi$ takes values in $\cc=\k^\perp$ as required. 
	Since $\alpha^\varpi$ is $K$-equivariant, the $\dd'$-values 1-form $\pr_{\dd'}\alpha^\varpi$ is $K$-invariant. To show that it descends, we must show that it is $K$-basic. Thus let $\xi\in \k$. 
	For $\zeta\in \cc$,  we see 
	\begin{align*} 
	\iota_{\xi_{\Q}}\l\alpha^\varpi,\zeta\r&= \l\iota_{\xi_{\Q}}\alpha,\zeta\r-\l\iota_{\xi_{\Q}}\alpha,\iota_{\zeta_{\Q}}\theta\r
	+\chi(\zeta,\xi)-\chi(\iota_{\zeta_{\Q}}\theta,\xi)
	\\&=
	\chi(\xi,\zeta)+\chi(\zeta,\xi)-\chi(\xi,\iota_{\zeta_{\Q}}\theta)-\chi(\iota_{\zeta_{\Q}}\theta,\xi)
		\\
	&=\l\zeta,\xi\r-\l \iota_{\zeta_{\Q}}\theta,\xi\r=0.
	\end{align*}
 It follows that $\iota_{\xi_{\Q}}\alpha^\varpi$ takes values in $\cc^\perp=\k$, and so 
  $\iota_{\xi_{\Q}}\pr_{\dd'}\alpha^\varpi=0$. 
\end{proof}

\begin{remark}[Bivector fields] \label{rem:bivac1}
	Suppose $\m$ is a Lagrangian subspace complementary to $\g$ in $\dd$, defining the bivector field $\pi_\Q$. 
	The image $\m'$ of 
	$\m\cap\cc$ in $\dd'$ is a Lagrangian complement to $\g'$, defining a bivector field $\pi_{\Q'}$.  
	Since the reduction morphism 
	$\Q\times \dd\da \Q'\times \dd'$ (where $(q,\zeta)$ is related to $(q',\zeta')$ if and only $\zeta\in\cc$, and 
	$q',\zeta'$ are the images of $q,\zeta$, respectively)  
	relates the Lagrangian splittings (in the sense of \cite[Section 3.5]{lib:cou}), 
	it follows that the quotient map $\Q\to \Q'$ takes $\pi_\Q$ to $\pi_\Q'$. Alternatively, this may be deduced from the formula \eqref{eq:qbivector}.
\end{remark}

\subsection{Lifting theorem}
Let $\Q'=\Q/K$ as above. If $M$ is a manifold with a $G$-equivariant map $\Phi\colon M\to \Q$, then the action of $K$ on $M$ is again free and proper (by equivariance of $\Phi$), and hence $M'=M/K$ with $G'$-equivariant map $\Phi'\colon M'\to \Q'$ is defined. Conversely, given 
a  $G'$-equivariant map $\Phi'\colon M'\to \Q'$, we recover $M$ by pulling back the principal $K$-bundle 
$\Q\to \Q'$. It is possible to incorporate Dirac geometry in this construction:

\begin{theorem}[Lifting theorem]	\label{th:liftingtheorem}
	Let $\Q'=\Q/K$ and $G'=G/K$ as above. 
	There is a 1-1 correspondence between 
	\begin{itemize}
		\item quasi-Poisson $G$-manifolds $M$ with $\Q$-valued moment maps, 
		\item quasi-Poisson $G'$-manifolds $M'$ with $\Q'$-valued moment maps.
	\end{itemize}	
	Furthermore, $M'=M/K$ is quasi-symplectic if and only if $M$ is quasi-symplectic. 
\end{theorem}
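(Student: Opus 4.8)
The plan is to reduce everything to a single statement about quasi-symplectic foliations. By Lemma \ref{lem:exactly}, $M$ (resp.\ $M'$) is quasi-symplectic exactly when its Dirac morphism $R$ (resp.\ $R'$) is exact, equivalently when the range of the anchor of the Lie algebroid $E$ (resp.\ $E'$) is all of $TM$ (resp.\ $TM'$); write $\F\subset TM$ and $\F'\subset TM'$ for these ranges, the quasi-symplectic distributions. Under the correspondence of the first part of the theorem, $R'$ is the reduction of $R$ along the free, proper $K$-action, with $\AA_{\Q'}$ the reduction of $\AA_\Q$ (Proposition \ref{prop:targetreduction}). Writing $p\colon M\to M'$ for the quotient map, I claim the result follows from the single assertion $Tp(\F)=\F'$. (If one takes for granted that reduction of Dirac morphisms preserves exactness, this is immediate; I prefer to extract the foliation statement and verify it by hand.)

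Granting $Tp(\F)=\F'$, the equivalence is formal. The fibres of $p$ are the $K$-orbits, so $\ker(Tp)=\{\xi_M\mid\xi\in\k\}$; as $\k\subseteq\g$ these are $\g$-orbit directions and hence lie in $\F$, giving $\ker(Tp)\subseteq\F$. Since $Tp$ is a surjective submersion, $\F=TM$ forces $\F'=Tp(\F)=TM'$; conversely if $\F'=TM'$ then for $v\in TM$ one picks $w\in\F$ with $Tp(w)=Tp(v)$, so that $v-w\in\ker(Tp)\subseteq\F$ and therefore $v\in\F$. Thus $\F=TM\iff\F'=TM'$.

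It remains to prove $Tp(\F)=\F'$, which is the main obstacle. Spelling out the reduction, an element of $E'$ over $p(m)$ is represented by $(\zeta,v+\mu)\in E_m\subset\dd\times\ol{\T M}$ with $\zeta\in\cc$; since $E$ is Lagrangian it contains, hence is contained in the orthogonal of, the isotropic generators $\xi\mapsto(\xi,\xi_M)$, $\xi\in\k$, so every element of $E$ satisfies $\langle\zeta,\xi\rangle=\mu(\xi_M)$ and the condition $\zeta\in\cc$ becomes equivalent to $\mu$ annihilating $\ker(Tp)$—this is what aligns the reductions on the two factors. The reduced anchor is $(\zeta,v+\mu)\mapsto Tp(v)$, so $\F'=\{Tp(v)\mid(\zeta,v+\mu)\in E_m,\ \zeta\in\cc\}$ and $\F'\subseteq Tp(\F)$ is clear. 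For the reverse inclusion, take $(\zeta,v+\mu)\in E_m$ arbitrary and write $\zeta=\zeta_\cc+\eta$ with $\zeta_\cc\in\cc$ and $\eta\in\dd_{\Phi(m)}$, using the transversality $\dd=\cc+\dd_{\Phi(m)}$ from \S\ref{subsec:quotients}. If I can produce $\mu'$ with $(\eta,0+\mu')\in E_m$, then $(\zeta,v+\mu)-(\eta,0+\mu')=(\zeta_\cc,v+(\mu-\mu'))\in E_m$ with $\zeta_\cc\in\cc$, whence $Tp(v)\in\F'$. The crux is therefore the claim that the forward image of $T^*M$ under $R_0$ equals the Lagrangian stabilizer $\dd_{\Phi(m)}$: containment in $\dd_{\Phi(m)}$ is forced by the moment-map anchor condition (a pure covector is $\Phi$-related to $0\in T\Q$, so any partner $\eta$ has $\eta_\Q|_{\Phi(m)}=0$), and equality I expect from the Lagrangian property of $E$ together with a dimension count $\dim\dd_{\Phi(m)}=\hh\dim\dd$. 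It is essential that one obtains a \emph{pure} covector $(\eta,0+\mu')$ rather than some $(\eta,w+\mu')$ with $w$ vertical, because freeness of the $K$-action gives $\ker(T\Phi)\cap\ker(Tp)=0$, ruling out any vertical substitute; this is the delicate point on which the argument turns.
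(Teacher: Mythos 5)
Your argument only addresses the \emph{Furthermore} clause. The main assertion of the theorem --- the bijective correspondence between quasi-Poisson $G$-manifolds with $\Q$-valued moment maps and quasi-Poisson $G'$-manifolds with $\Q'$-valued moment maps --- is assumed outright in the phrase ``under the correspondence of the first part of the theorem, $R'$ is the reduction of $R$.'' Establishing that correspondence is the actual substance of the paper's proof: one must show that reduction by $K$ sends $G$-equivariant Dirac morphisms $R\colon (\T M,TM)\da(\AA_\Q,E_\Q)$ intertwining the generators to Dirac morphisms $R'\colon (\T M',TM')\da(\AA_{\Q'},E_{\Q'})$, and, crucially, that the \emph{strong} Dirac-morphism property (existence and uniqueness of lifts of elements of the Dirac structure) passes both downward and upward through the reduction. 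The paper does this via Propositions \ref{prop:part1} and \ref{prop:part2}; the upward direction in particular requires correcting lifts by elements of $\on{ran}(\varrho_i)$ and using freeness of the $K$-action to get uniqueness, none of which appears in your proposal. Until that is supplied there is no correspondence under which to compare quasi-symplecticity, so the gap is genuine.

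For the part you do prove, the route is different from the paper's --- which simply quotes that reduction preserves exactness of Courant morphisms --- and it is essentially sound. The reduction to $Tp(\F)=\F'$, the observation that for $(\zeta,v+\mu)\in E$ the condition $\zeta\in\cc$ is equivalent to $\mu$ annihilating $\ker(Tp)$ (via pairing with the generators $(\xi,\xi_M)$, $\xi\in\k$), and the decomposition $\zeta=\zeta_\cc+\eta$ using $\dd=\cc+\dd_{\Phi(m)}$ are all correct. The ``crux'' you leave as an expectation does hold: the forward image of $T^*_mM$ under $R_0$ is the image of a Lagrangian subspace under a Lagrangian relation, hence is Lagrangian, and being contained in the Lagrangian $\dd_{\Phi(m)}$ it equals it. One small correction: purity of the covector is not actually essential. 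Had you only produced $(\eta,w+\mu')\in E_m$ with $w\in\ker(Tp)$, the difference $(\zeta_\cc,(v-w)+(\mu-\mu'))$ would still witness $Tp(v)\in\F'$, since $Tp(v-w)=Tp(v)$; so the point you flag as delicate is harmless.
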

\begin{proof}
	The theorem is a straightforward  application of a general result on reductions of Dirac morphisms, see \cite{cab:dir} as well as 
Proposition \ref{prop:part2} in the appendix. By definition, the structure of a quasi-Poisson $G$-manifold $M$ with 
	$\Q$-valued moment map $\Phi$ is described by a 
	 $G$-equivariant Dirac morphism
	\[  R\colon (\T M,TM)\da \big(\AA_\Q,\ E_\Q\big).\]
	This Dirac morphism intertwines the generators for the $\g$-action, given for $\T M$ by the generating vector fields 
	for the action on $M$ and for $\AA_\Q$ by the constant sections of $\AA_\Q$:
	\[ (\xi_M,0)\sim_{{R}} \varrho(\xi).\]
	In particular, $R$ intertwines the generators for the $K$-action. Since $\T M'$ is the reduction by $K$ of the Courant algebroid 
	$\T M$, Proposition \ref{prop:part2} gives a 1-1 correspondence between $G$-equivariant Dirac morphisms $R$, as above, and 
	$G'=G/K$-equivariant Dirac morphism 
	\[ R'  \colon (\T M',TM')\da \big(\AA_{\Q'},\ E_{\Q'}\big).\]
Moreover, $R$ is exact if and only if $R'$ is exact. 
\end{proof}

\begin{example}
Let $\G=
G\ltimes \Q\rra \Q$ be the quasi-symplectic groupoid integrating the Dirac structure $(\AA_\Q,E_\Q)$. Recall that $\G$ is a
quasi-symplectic $G\times G$-manifold, with $\Q\times \Q^\op$-valued moment map given by $(\tz,\sz)$. Taking the quotient under the 
action of $K\times K$, we obtain the quasi-symplectic groupoid $\G'=G'\ltimes \Q'\rra \Q'$ integrating $\Q'$. We may also take a
quotient with respect to only the second $K$ factor. The resulting space $N=\G/(\{e\}\times K)$ gives a 
Morita equivalence of quasi-symplectic groupoids, 
\begin{equation}\label{eq:corr}	\begin{tikzcd}
[column sep={7em,between origins},
row sep={4.5em,between origins},]
\G \arrow[d,shift left]\arrow[d,shift right]
& N \arrow[dl, "p"] \arrow [dr, "q"']   & \G' \arrow[d,shift left]\arrow[d,shift right] \\
\Q & & \Q'
\end{tikzcd}
\end{equation}
\end{example}

\begin{remark}[Bivector fields] \label{rem:bivac2}
Suppose $\m$ is a Lagrangian subspace complementary to $\g$ in $\dd$, and let $\m'\subset \dd'$ its reduction as as in Remark \ref{rem:bivac1}.
The backward image of $\Q\times \m$ under $R$ (equivalently, of $\m$ under $R_0$) is a Lagrangian complement to $TM$, hence is the graph of a bivector field $\pi_{M}$, 
in such a way that $\pi_M\sim_\Phi \pi_\Q$. Similarly, we obtain a bivector field $\pi_{M'}$. 
By Remark \ref{rem:bivacA}, we have $\pi_{M'}\sim_{\Phi'}\pi_{\Q'}$.  
\end{remark}


\subsection{Equivalence between $D$-valued moment maps and $D/G$-valued moment maps}
Our main example for the quotient construction is the following. Let $(D,G)$ be a group pair integrating the Manin 
pair $(\dd,\g)$, and $D/G$ the moment map target with the Dirac structure 
\begin{equation}\label{eq:dg}
  (\AA_{D/G},E_{D/G})=(D/G\times \dd,\,D/G\times \g).\end{equation}
We may regard $(\dd,\g)$ as a reduction of the Manin pair $(\dd\oplus \ol{\dd},\g\oplus \g)$ with respect to 
$\cc=\dd\oplus \g$, i.e. $\k=\cc^\perp=0\oplus \g$. Let
\begin{equation}\label{eq:d}
 (\AA_D,L_D)=(D\times (\ol{\dd}\oplus \dd),\,D\times (\g\oplus \g)),\end{equation}
where $\AA_D$ is as in Example \ref{ex:amm1} (with $D$ playing the role of $G$); the notation $L_D$ is 
used to avoid confusion with the Dirac structure $E_D$ (defined by $\dd_\Delta$). Note that this uses 
$\ol{\dd}\oplus \dd$ rather than $\dd\oplus \ol{\dd}$; the sign change of the metric means using the opposite 
Dirac structure $(\AA_D,L_D)^{\on{op}}=(\ol{\AA},L_D)$.

By Proposition \ref{prop:targetreduction}, 
the Dirac structure \eqref{eq:dg} is the reduction of $(\AA_D,L_D)^\op$ with respect to $K=\{e\}\times G$. 
Alternatively, we may use the isomorphism of Dirac structures 
\[ (\AA_D,L_D)\da (\AA_D,L_D)^{\on{op}}\]
given by $(d,\zeta,\zeta')=(d^{-1},\zeta',\zeta)$. Thus, \eqref{eq:dg} may also be regarded as the reduction of \eqref{eq:d} under 
$K=G\times \{e\}$; the quotient map reads $d\mapsto d^{-1}G$.

 Since $\AA_D$ has a canonical $D\times D$-equivariant splitting given by 
$\alpha=\hh(-\theta^R_D,\theta^L_D)$
(cf.~ Example \ref{ex:amm2}), we have 
\begin{equation}\label{eq:splittingofAD} \AA_D\cong \T_\eta D\end{equation}
with the Cartan 3-form $\eta=\f{1}{12}\l\theta^L_D,[\theta^L_D,\theta^L_D]\r$.  The action groupoid for the $G\times G$-action is the corresponding quasi-symplectic groupoid integrating this Dirac structure. In terms of the canonical splitting of $\AA_D$, the 2-form on 
$\G=(G\times G)\ltimes D$ is explicitly given by 
\begin{equation}\label{eq:explicitomega}
 \omega=\hh \left(\l d^*\theta^L,h_2^*\theta^L\r+\l d^*\theta^R,h_1^*\theta^L\r+\l h_1^*\theta^L,\Ad_d h_2^*\theta^L\r\right).
\end{equation}
The Lifting Theorem \ref{th:liftingtheorem} establishes a 1-1 correspondence between quasi-Poisson $G$-spaces $M$ 
with $D/G$-valued moment map $\Phi\colon M\to D/G$ and quasi-Poisson $G\times G$-spaces with $D$-valued moment map 
$\wh{\Phi} \colon \wh{M}\to D^\op$, where we use  the notation $D^\op$ to denote the moment map target $D$ with the 
Dirac structure $(\AA_D,L_D)^\op$. Alternatively, after  interchanging the two $G$-factors and composing $\wh{\Phi}$ with inversion, it is equivalent to  quasi-Poisson $G\times G$-spaces with $D$-valued moment maps.

\begin{example}
By Example \ref{ex:orbits}, the moment map target $D/G$ is itself a quasi-Poisson $G$-manifold, 
with moment map the identity. The lifted space is $D^\op$ as a quasi-Poisson $G\times G$-manifold, with moment 
map the identity. If $\O=G.(dG)\subset D/G$ is an orbit for the $G$-action, then  $\wh{\O}$ is the double coset
$GdG\subset D$. 	In particular, the base point $\O=\{eG\}\subset D/G$ corresponds to $G\subset D$. 
\end{example}

\begin{example}
By applying the isomorphism $D\cong D^\op$ to the second moment map component of the quasi-symplectic groupoid $(G\times G)\ltimes D\rra D$, we produce a quasi-symplectic $(G\times G)\times (G\times G)$-space with $D^\op\times D^\op$-valued moment map. The subsequent quotient by the action of 
$(\{e\}\times G)\times (\{e\}\times G)$ is just $D$, using the quotient map $(g_1,g_2,d)\mapsto g_1d$. The map $(\tz,\sz)$ descends to the map 
\[ D\to D/G\times D/G,\ d\mapsto (dG,\ d^{-1}G).\] 
This example of a quasi-symplectic $G\times G$-space with $D/G\times D/G$-valued moment map is due to Bursztyn-Crainic, see \cite[Theorem 4.1]{bur:di1}. 
\end{example}

The reformulation of $D/G$-valued moment maps as $D$-valued moment maps comes with several benefits: 
\begin{enumerate}
	\item There are simple constructions of  \emph{fusion products} and \emph{conjugates}.
	\item The Courant algebroid $\AA_D$ has a canonical splitting, allowing us to write the axioms for the `quasi-symplectic case' directly in terms of differential forms.  
	\item One obtains  new examples, coming from moduli spaces of flat $D$-bundles. 
\end{enumerate} 
We will explore these aspects in the next sections.

\section{Fusion and conjugation}\label{sec:fusion}	
In this section, we will describe operations of fusion and conjugation for $G\times G$-spaces with 
$D$-valued moment maps. By Theorem \ref{th:liftingtheorem}, these correspond to similar operations for $D/G$-valued moment maps.

\subsection{Fusion}
Given two Hamiltonian Poisson $G$-manifolds $(M_i,\pi_i)$ with moment maps $\Phi_i\colon M_i\to \g^*$, the direct product    
with diagonal $G$-action is again a Hamiltonian Poisson $G$-manifold, with moment map the pointwise sum $\Phi_1+\Phi_2\colon M_1\times M_2\to \g^*$. Given a Poisson Lie group structure on $G$, there is  a similar fusion operation for Hamiltonian $G$-manifolds with $G^*$-valued moment maps, using a pointwise product of the moment maps  \cite{lu:mo}  
-- however, the action on $M_1\times M_2$ is no longer the diagonal action \cite[Lemma 2.19]{fl:pc}. Similarly, given a metric on $\g=\on{Lie}(G)$, there is a fusion operation for quasi-Poisson $G$-spaces with $G$-valued moment maps \cite[Section 5]{al:qu}-- however, the bivector field on the product $M_1\times M_2$ must be modified. In \cite[Section 4]{sev:lef}, \v{S}evera describes a fusion operation for spaces with $D/G$-valued moment maps -- however, the fusion product $M_1\fus M_2$ is not the direct product of spaces, in general.

We shall now describe a product operation for quasi-Poisson $G\times G$-spaces with $D$-valued moment map. By  Theorem \ref{th:liftingtheorem}, it translates into \v{S}evera's fusion operation for spaces with $D/G$-valued moment maps. 

\begin{theorem}\label{th:fusion}
For quasi-Poisson  $G\times G$-spaces with $D^{}$-valued moment map 
$\Phi_i\colon M_i\to D$, $i=1,2$,
the space 
\[
{M}_1\fus {M}_2=({M}_1\times {M}_2)\Big/(\{e\}\times G_\Delta\times \{e\})
\]
is naturally a  quasi-Poisson  $G\times G$-space with $D^{}$-valued moment map. The action on this space is 
induced by the action of $G\times \{e\}\times \{e\}\times G$ on ${M}_1\times {M}_2$, and the moment map 
${\Phi}_1\fus {\Phi}_2$ is induced by  $\Mult_D\circ ({\Phi}_1\times {\Phi}_2)$.  
\end{theorem}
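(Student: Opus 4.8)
The plan is to realize fusion as a composition of two operations that have already been given meaning: the Cartesian product of Hamiltonian spaces, and reduction by a subgroup, both in the Dirac-morphism framework of the preceding sections. Throughout I work with the Dirac-morphism description of quasi-Poisson $G\times G$-spaces with $D$-valued moment map, i.e.\ each $M_i$ is encoded by a $G\times G$-equivariant Dirac morphism $R_i\colon(\T M_i,TM_i)\dasharrow(\AA_D,L_D)^{\op}$ with base map $\Phi_i$.

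\medskip
\noindent\textbf{Step 1 (Product).}
First I would form the external product. The Cartesian product $M_1\times M_2$ carries the product Dirac morphism
\[
R_1\times R_2\colon (\T(M_1\times M_2),\,T(M_1\times M_2))\dasharrow (\AA_D,L_D)^{\op}\times(\AA_D,L_D)^{\op},
\]
equivariant for the full $(G\times G)\times(G\times G)$-action, with base map $\Phi_1\times\Phi_2\colon M_1\times M_2\to D\times D$. This is the target $D^{\op}\times D^{\op}$ for the product Manin pair, and multiplicativity of no structure is needed here---it is simply the fact that a product of Dirac morphisms is a Dirac morphism.

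\medskip
\noindent\textbf{Step 2 (Compose with multiplication).}
Next I would compose with the Dirac morphism coming from group multiplication. The group $D$, viewed as the moment map target for the Manin pair $(\ol{\dd}\oplus\dd,\g_\Delta\oplus\g_\Delta)$ of Example~\ref{ex:amm1}, is precisely the Dirac Lie group situation of Example~\ref{ex:H}; that example furnishes a Dirac morphism with base map $\Mult_D$,
\[
(\AA_D,L_D)\times(\AA_D,L_D)\dasharrow (\AA_D,L_D),
\]
and passing to opposites it relates $D^{\op}\times D^{\op}$ to $D^{\op}$. Composing $R_1\times R_2$ with this multiplication morphism yields a Dirac morphism whose base map is $\Mult_D\circ(\Phi_1\times\Phi_2)$. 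At this stage the diagonal middle copy $\{e\}\times G_\Delta\times\{e\}$ acts trivially on the target, because multiplication intertwines the two inner $G$-actions---this is exactly the equivariance built into the multiplication morphism.

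\medskip
\noindent\textbf{Step 3 (Reduce).}
Finally I would reduce by the group $K=\{e\}\times G_\Delta\times\{e\}$. By Step~2 the composite morphism intertwines the generators of the $K$-action on $M_1\times M_2$ with the zero sections on the target (since $K$ acts trivially there). I would then invoke the reduction of Dirac morphisms (Proposition~\ref{prop:part2}, as already used to prove Theorem~\ref{th:liftingtheorem}) to descend the composite to a Dirac morphism
\[
(\T(M_1\fus M_2),\,T(M_1\fus M_2))\dasharrow (\AA_D,L_D)^{\op}
\]
on the quotient $M_1\fus M_2=(M_1\times M_2)/K$, equivariant for the residual $G\times\{e\}\times\{e\}\times G\cong G\times G$-action, with the induced base map $\Phi_1\fus\Phi_2$. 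This is exactly the asserted structure.

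\medskip
\noindent\textbf{Main obstacle.}
The delicate point is Step~2: the existence and equivariance of the multiplication Dirac morphism for $D$ as a moment map target, i.e.\ that the group multiplication $\Mult_D$ underlies a genuine Dirac morphism that is appropriately equivariant for the inner diagonal $G$-action, and that the \emph{stabilizer/strongness} condition needed to apply the reduction proposition is met. I would verify this by exhibiting the morphism explicitly using the canonical splitting $\alpha=\hh(-\theta^R_D,\theta^L_D)$ of $\AA_D$ from Example~\ref{ex:amm2} and the explicit $2$-form on the quasi-symplectic groupoid in \eqref{eq:explicitomega}, checking that composing with it preserves exactness (so that the quasi-symplectic case is respected, giving the final clause on quasi-symplecticity via Theorem~\ref{th:liftingtheorem}). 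The remaining bookkeeping---identifying the residual action and confirming the base map is $\Mult_D\circ(\Phi_1\times\Phi_2)$ descended---is routine once the multiplication morphism is in hand.
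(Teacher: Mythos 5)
Your overall strategy (product, multiply, reduce) is the same as the paper's, but the order in which you compose and reduce matters, and as written Step~2 contains a genuine gap. The multiplication morphism $T\colon \AA_D\times\AA_D\da\AA_D$ with base map $\Mult_D$ is \emph{not} a Dirac morphism for the pairs $(L_D\times L_D, L_D)$: its kernel is $D^2\times(0\oplus\dd_\Delta\oplus 0)$, which meets $L_D\times L_D$ in $D^2\times(0\oplus\g_\Delta\oplus 0)$, so the uniqueness (strongness) condition fails --- it is only weakly Dirac. Consequently the composite $T\circ(R_1\times R_2)$ is not a Dirac morphism either: the generating vector fields of $K=\{e\}\times G_\Delta\times\{e\}$ on $M_1\times M_2$ are $R_1\times R_2$-related to elements of $0\oplus\g_\Delta\oplus 0$, hence $T$-related to $0$, so nonzero tangent vectors in $T(M_1\times M_2)$ are related to $0\in L_D$. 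Your "main obstacle" paragraph gestures at this but proposes to fix it by an explicit splitting computation, which cannot succeed: the failure of strongness before reduction is structural, not computational.

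The second problem is that Step~3 then invokes Proposition~\ref{prop:part2} with the trivial $K$-action on the target $\AA_D$, whereas that proposition (and Proposition~\ref{prop:part1}) requires the base actions on \emph{both} sides to be principal. The paper's proof circumvents both issues by reversing the order: it first reduces the target $\AA_D\times\AA_D$ by the inner $G_\Delta$-action (which is free and proper on $D\times D$, and whose isotropic generators are exactly the offending subbundle $D^2\times(0\oplus\g_\Delta\oplus 0)$), so that $T$ descends to a genuine Dirac morphism $T\qu G_\Delta$ out of the reduced target; separately it reduces $R_1\times R_2$ to a Dirac morphism $R\qu G_\Delta$ into that same reduced target; and only then composes. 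To repair your argument you would either have to adopt this order, or work with weak Dirac morphisms throughout and prove by hand that strongness is restored after quotienting --- which amounts to redoing the content of Proposition~\ref{prop:part2} in the form the paper uses it.
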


\begin{proof}
The Courant algebroid $\AA_D$ has the structure of a $\ca{CA}$-groupoid \cite{al:pur,lib:dir}
\[ \AA_D=D\times 	 (\ol{\dd}\oplus\dd) \rra \dd.\]
As a groupoid, it is the direct product of the 
pair groupoid  $\ol{\dd}\oplus\dd\rra\dd$ with the group $D$; 
the graph of 
the groupoid multiplication defines a 
Courant morphism
\begin{equation}\label{eq:multiplication}
 T\colon \AA_D^{}\times \AA_D^{}\da \AA_D^{}\end{equation}
 with base map $\Mult_D$.  
Explicitly, 
\[ \left((d_1,(\zeta_1,\zeta_1')),(d_2,(\zeta_2,\zeta_2'))\right)\sim_T (d,\zeta,\zeta')\ \ 
\Leftrightarrow\ \ 
d=d_1d_2,\ \zeta_1=\zeta,\zeta_1'=\zeta_2,\zeta_2'=\zeta'.\]  
The kernel of this Courant morphism is the subbundle 
\[ \ker(T)=D^2\times (0\oplus \dd_\Delta\oplus  0).
\]
The Dirac structure $L_D=D\times(\g\oplus\g)$ is a subgroupoid of $\AA_D$. Nevertheless, $T$ is only 
`weakly Dirac' for the given Dirac structures because  
\begin{equation}\label{eq:kernelintersection}
\ker(T)\cap (L_D\times L_D)=D^2\times (0\oplus \g_\Delta\oplus 0)\end{equation}
is nontrivial. This will be remedied by taking the reduction with respect to  the action of $\{e\}\times G_\Delta\times\{e\}$ on $\AA_D^{}\times \AA_D^{}$. This action 
has isotropic generators, given exactly by the subbundle \eqref{eq:kernelintersection}. To simplify notation, we write 
$G_\Delta$ in place of $\{e\}\times G_\Delta\times\{e\}$, and denote the   Courant reduction by
by $\big(\AA_D^{}\times \AA_D^{}\big)\qu G_\Delta$. 
The Dirac structure $L_D\times L_D$ is $G_\Delta$-invariant, and contains 
the generators \eqref{eq:kernelintersection}. Using Proposition \ref{prop:part2} about the reduction of Dirac morphisms, 
it follows that $T$ descends to a Dirac morphism
\[ \begin{tikzcd} 
\left(\big(\AA_D^{}\times \AA_D^{}\big)\qu G_\Delta,\ (L_D\times L_D)\qu G_\Delta\right)
\arrow[r, dashrightarrow, "{T\qu G_\Delta}"]\arrow[d] & [2em] (\AA_D,L_D)\arrow[d]\\(D\times D)/G_\Delta \arrow[r]& D
\end{tikzcd}
\]

On the other hand, $(\T({M}_1\fus {M}_2),T(M_1\fus M_2)$ is the reduction of the Dirac structure 
$(\T(M_1\times M_2),T(M_1\times M_2))$ under the action of $G_\Delta$. 
Letting  
\[ {R}_i\colon (\T{M}_i,T{M}_i)\da \left( \AA_D^{},\   L_D\right)\]
denote the Dirac morphisms for the quasi-Poisson $G\times G$-spaces ${M}_i$, the reduction 
of the product ${R}= {R}_1\times {R}_2$ 
by $G_\Delta$ is a Dirac morphism
\[ \begin{tikzcd} 
\big(\T(M_1\fus M_2),\ T(M_1\fus M_2)\big)
\arrow[r, dashrightarrow, "{R\qu G_\Delta}"]\arrow[d] & [2em] \left(\big(\AA_D^{}\times \AA_D^{}\big)\qu G_\Delta,\ (L_D\times L_D)\qu G_\Delta\right)\arrow[d]\\{M}_1\fus {M}_2 \arrow[r]& (D\times D)/G_\Delta
\end{tikzcd}
\]
By composition, we obtain the desired Dirac morphism 
\[  {R}_1\fus {R}_2\colon \big(\T(M_1\fus M_2),\ T(M_1\fus M_2)\big) \da \left( \AA_D^{},\   L_D\right).\qedhere\]
\end{proof}
 
 It is clear from the construction that the fusion product is associative, in the sense that 
 \[ (M_1\fus M_2)\fus M_3\cong M_1\fus (M_2\fus M_3)\] 
 canonically. On the other hand, there is no $G\times G$-equivariant isomorphism between $M_1\fus M_2$ and $M_2\fus M_1$, in general. (See \cite[Section 4.2.2]{taw:th} for a concrete counter-example.)

 \begin{remark}
 By combining Theorem \ref{th:fusion} with the Lifting Theorem \ref{th:liftingtheorem}, one obtains a fusion operation for 
 quasi-Poisson $G$-spaces with $D/G$-valued moment maps $\Phi_i\colon M_i\to D/G$, by lifting to space $\wh{M}_i$ with $D$-valued moment maps, and then taking the quotient of $\wh{M}_1\fus \wh{M}_2$. The resulting space $M_1\fus M_2$ is the fiber product of $M_1$ with $D\times_G M_2$ over $D/G$.   	
 \end{remark}

 The fusion operation generalizes in various directions, with straightforward generalizations of the proof:
 \begin{itemize}
 	\item \emph{Internal fusion}: If $M$ is a 
 	quasi-Poisson $(G\times G)\times (G\times G)$-manifold  with $D\times D$-valued moment map, then  
 	\[{M}_{\on{fus}}={M}/(\{e\}\times G_\Delta\times \{e\})\] 
 	is a quasi-Poisson $G\times G$-manifold  with $D$-valued moment map ${\Phi}_{\on{fus}}$ induced by the pointwise product. 
 	\item Given another $G'$-equivariant moment map target $\Q$ for a Manin pair $(\dd',\g')$ 
 	one has internal fusion of quasi-Poisson 
 	$(G\times G)\times (G\times G)\times G'$-spaces with $D\times D\times \Q'$-valued moment map, producing a space with $ D\times \Q'$-valued moment map. 
 	\item Given Lagrangian subalgebras $\g_1,\g_2\subset\dd$ with corresponding groups $G_1,G_2$, we may 
 	define quasi-Poisson $G_1\times G_2$-spaces with $D$-valued moment map, by using the 
 	Dirac structure $D\times (\g_1\oplus\g_2)$. One then has a fusion of  quasi-Poisson $G_1\times G_2$-spaces and	quasi-Poisson $G_2\times G_3$-spaces with $D$-valued moment maps to quasi-Poisson $G_1\times G_3$-spaces with $D$-valued moment maps.  
 	\item Suppose $\cc\subset \dd$ is a coisotropic subalgebra, and suppose that $\k=\cc^\perp$ integrates to a closed subgroup $K\subset D$. Let $\dd'=\cc/\k$ as in Section \ref{sec:quotientlifts}, with quotient map $\pi\colon \cc\to \dd'$. The fiber product 
 	\[ \mf{l}_\cc=\{(\zeta_1,\zeta_1)\in \cc\times \cc \colon \pi(\zeta_1)=\pi(\zeta_2)\}\]
    is a subgroupoid of the pair groupoid  	$\ol\dd\oplus\dd$, and is a Lagrangian Lie subalgebra. We may thus consider $D$ as a moment map target for $(\ol{\dd}\oplus \dd,\mf{l}_\cc)$. The argument for Theorem \ref{th:fusion} generalizes to give a 
    fusion product for quasi-Poisson $\mf{l}_\cc$-spaces with $D$-valued moment maps; as a manifold the fusion product of two such spaces is a product $(M_1\times M_2)/(\{e\}\times K_\Delta\times \{e\})$. 
    
    Using reduction with respect to the coisotropic subalgebra 
    $\ol\dd\oplus \cc$, we obtain the Manin pair $(\ol\dd\oplus \dd',\cc_\Delta)$, 
    where $\cc$ is embedded diagonally by the map   $\zeta\mapsto (\zeta,\pi(\zeta))$.    The quotient $D/K$ is a moment map target for this Manin pair. The lifting theorem gives an equivalence with quasi-Poisson $\cc$-spaces with $D/K$-valued moment map. 
 \end{itemize}
 

\subsection{Conjugation}
The conjugate $M^*$ of a Poisson $G$-space $(M,\pi)$ with  moment map $\Phi\colon M\to \g^*$ is obtained by changing the sign of $\pi$ and of the moment map. For Poisson Lie group actions with $G^*$-valued moment maps, conjugation involves replacing $\Phi$ with $\on{Inv}_{G^*}\circ \Phi$; similarly for $G$-valued moment maps. 

A conjugation for quasi-Poisson spaces with $D/G$-valued moment maps was introduced by Bursztyn-Crainic \cite[Proposition 4.3]{bur:di1} and \v{S}evera \cite[Section 6]{sev:lef}, independently. 

Using the Lifting Theorem, it may be understood in terms of a natural conjugation for quasi-Poisson spaces with $G\times G$-valued moment map $\Phi\colon M\to D$.  Denote by ${M}^*$ the same 
space with the new action 
\[ (g_1,g_2)\cdot_{\rm new}{m}=(g_2,g_1)\cdot {m}\]
obtained by switching the two factors in $G\times G$. Then ${\Phi}^*=\on{inv}_D\circ\ {\Phi}\colon {M}^*\to D$
is equivariant for the new $G\times G$-action. 
\begin{theorem}\label{th:conjugation}
	The space ${M}^*$ is naturally a quasi-Poisson $G\times G$-space with $D$-valued moment map ${\Phi}^*$. 
\end{theorem}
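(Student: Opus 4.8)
The plan is to implement conjugation by composing the Dirac morphism $R$ of $M$ with the groupoid inversion of the $\ca{CA}$-groupoid $\AA_D$, in direct analogy with the use of the groupoid multiplication $T$ in the proof of Theorem \ref{th:fusion}. Recall from that proof that $\AA_D=D\times(\ol{\dd}\oplus\dd)\rra\dd$ is a $\ca{CA}$-groupoid, namely the direct product of the group $D$ with the pair groupoid $\ol{\dd}\oplus\dd\rra\dd$. Its groupoid inversion is the bundle map
\[ S\colon \AA_D\to\AA_D,\qquad (d,\zeta,\zeta')\mapsto(d^{-1},\zeta',\zeta),\]
which covers $\on{inv}_D$ and, being a groupoid inversion, is a Courant isomorphism $S\colon(\AA_D,L_D)\to(\AA_D,L_D)^{\op}$; note that $S$ preserves $L_D=D\times(\g\oplus\g)$, since it merely interchanges the two copies of $\g$.

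First I would record the equivariance of $S$. A direct computation gives $S\big((a_1,a_2)\cdot(d,\zeta,\zeta')\big)=(a_2,a_1)\cdot S(d,\zeta,\zeta')$, so that $S$ intertwines the $G\times G$-action on $\AA_D$ with its composition with the swap of the two factors. Consequently, composing the $G\times G$-equivariant Dirac morphism $R\colon(\T M,TM)\da(\AA_D,L_D)$ with $S$ produces a Dirac morphism with base map $\on{inv}_D\circ\Phi=\Phi^*$ that is equivariant for the \emph{new}, swapped action on $M^*$. This already exhibits $M^*$ as a quasi-Poisson $G\times G$-space, and because $S$ is an isomorphism the composite is exact precisely when $R$ is; hence by Lemma \ref{lem:exactly}, $M^*$ is quasi-symplectic if and only if $M$ is.

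The main obstacle is the bookkeeping of the opposite Dirac structure: taken literally, $S$ lands in $(\AA_D,L_D)^{\op}$ rather than in $(\AA_D,L_D)$, reflecting the fact that inversion negates the relevant $3$-form. Identifying the result as a genuine $D$-valued (as opposed to $D^{\op}$-valued) moment map space is therefore the real content, not mere relabeling. I would carry this out using the canonical $G\times G$-invariant splitting $\alpha=\hh(-\theta^R_D,\theta^L_D)$, with associated Cartan $3$-form $\eta$, together with the two inversion identities
\[ \on{inv}_D^*\eta=-\eta,\qquad \l\alpha,(\xi_2,\xi_1)\r=-\on{inv}_D^*\l\alpha,(\xi_1,\xi_2)\r\quad\big((\xi_1,\xi_2)\in\g\oplus\g\big),\]
the second of which follows from $\on{inv}_D^*\theta^L_D=-\theta^R_D$ and $\on{inv}_D^*\theta^R_D=-\theta^L_D$. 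In the quasi-symplectic case these identities show that $M^*$ is described, relative to the standard splitting, by the $2$-form $-\omega$ and the map $\Phi^*$, and that this pair verifies conditions (i)--(iii) of Proposition \ref{prop:qvaluedmomentmaps} for the target $D$: condition (i) becomes $\d(-\omega)=\Phi^*\eta=-(\Phi^*)^*\eta$, and condition (ii) follows from the second identity after replacing the generators by those of the swapped action. The general (not necessarily quasi-symplectic) statement is then obtained purely Dirac-theoretically from the composition with $S$, once the opposite has been matched as above. Finally, combining with the Lifting Theorem \ref{th:liftingtheorem} recovers the conjugation of Bursztyn--Crainic and \v{S}evera for $D/G$-valued moment maps.
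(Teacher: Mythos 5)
Your overall strategy --- compose $R$ with the groupoid inversion $S$ of $\AA_D$ and track the swap of the two $G$-factors --- is the paper's strategy, and you correctly flag the one real issue: $S$ flips the sign of the metric, so $S\circ R$ lands in $(\AA_D,L_D)^{\op}$ rather than $(\AA_D,L_D)$. But your resolution of that issue has a gap. You fix the sign only in the quasi-symplectic case, by passing to the canonical splitting and checking that $(-\omega,\Phi^*)$ satisfies (i)--(iii); that computation is correct and consistent with Section \ref{subsec:quasisymplectic}. You then assert that the general statement is ``obtained purely Dirac-theoretically from the composition with $S$, once the opposite has been matched as above.'' In the general quasi-Poisson case there is no $2$-form $\omega$ and nothing ``above'' to invoke: the theorem is stated for arbitrary quasi-Poisson $G\times G$-spaces, and a Dirac morphism into the \emph{opposite} Dirac structure is not yet the structure you are asked to produce.

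The missing ingredient is a second, source-side sign flip: the canonical involution $v+\mu\mapsto v-\mu$ of $\T M$, which preserves the Courant bracket, the anchor, and the Dirac structure $TM$, but reverses the metric, hence is an isomorphism of $(\T M,TM)$ with its opposite. Precomposing $R$ with this involution and postcomposing with $S$ reverses the metric on both factors of the relation, so the composite is again a Lagrangian relation for the original pairings, i.e.\ an honest $G\times G$-equivariant Dirac morphism out of $(\T M,TM)$ with base map $\Phi^*$ and equivariant for the swapped action. This is exactly how the paper argues; your $-\omega$ in the exact case is precisely the shadow of this involution, since it turns $\T_\omega\Phi$ into $\T_{-\omega}\Phi$. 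With that one observation added, the rest of your argument (equivariance of $S$, preservation of $L_D$, the inversion identities for $\theta^L$, $\theta^R$, $\eta$, $\alpha$, and preservation of exactness) goes through and coincides with the paper's proof.
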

\begin{proof}
Let ${R}\colon \T{M}\da \AA_D^\op$ denote the Dirac morphism for the space ${M}$. 
It remains a Dirac morphism, if we change the sign of the metric for both Courant algebroids.
We define ${R}^*$ as the following composition, 
\[ \T {M}\stackrel{\cong}{\lra} \ol{\T {M}} \stackrel{{R}}{\da} D\times \ol{\dd\oplus\ol{\dd}} \stackrel{\cong}{\lra}
\AA_D^\op.\] 
Here the first isomorphism is  the involution $x=v+\mu\mapsto v-\mu$ of $\T {M}$; this preserves the Courant 
bracket, anchor, and  the Dirac structure $TM$, but changes the sign of the metric. 
The last isomorphism is the involution of $\AA_D^\op$ 
given by groupoid inversion $(d,\zeta',\zeta)\mapsto (d^{-1},\zeta,\zeta')$; this 
preserves the Courant bracket and anchor, as well as the Dirac structure $L_D$, but once again changes the 
sign of the metric. Thus, ${R}^*$ is a Courant morphism, with base map ${\Phi}^*$, and equivariant for the 
$G\times G$-action on ${M}^*$. 
\end{proof}	
It is clear from the construction that $({M}^*)^*\cong {M}$; furthermore,
	\[ ({M}_1\fus{M}_2)^*\cong {M}_2^*\fus {M}_1^*.\]
\begin{remark}	
	The quasi-Poisson $G$-spaces $M/(\{e\}\times G)$ and $M^*/(\{e\}\times G) $	are usually non-isomorphic as $G$-manifolds. 	See \cite[Section 4.2.2]{taw:th} for a concrete example. \end{remark}

\subsection{The quasi-symplectic case}\label{subsec:quasisymplectic}
In Section \ref{sec:quasisymplectic}  we discussed the differential form approach to quasi-symplectic $G$-spaces with $\Q$-valued moment maps, for a given choice of splitting $\alpha$ of  $\AA_{\Q}=\Q\times \dd$. For quasi-symplectic $G\times G$-spaces with $D$-valued moment maps, there is a canonical $D\times D$-equivariant splitting given by 
$\alpha=\hh(-\theta^R,\theta^L)$, where  $\theta^L,\,\theta^R\in \Omega^1(D,\dd)$ are the  Maurer-Cartan forms on $D$, and with the  3-form 
$\eta=\f{1}{12} \l\theta^L,[\theta^L,\theta^L]\r$. 
The resulting isomorphism $\AA\stackrel{\cong}{\lra} \T_\eta D$ reads as \cite{al:pur}
\[ (d,\zeta_1,\zeta_2)\mapsto \left(\zeta_2^L-\zeta_1^R+\hh \left(\l\theta^L ,\,\zeta_2\r+\l\theta^R,\, \zeta_1\r\right)\right)\Big|_d.\]

Using this identification, one may describe the conditions for a Hamiltonian quasi-symplectic $G$-spaces 
with $D/G$-valued moment map in terms of  a $G\times G$-invariant 2-form ${\omega}\in \Omega^2({M})$  satisfying the following axioms:
\begin{enumerate}[label=(\roman*)]
	\item  $\d\omega=-\f{1}{12}{\Phi}^* \l\theta^L,[\theta^L,\theta^L]\r$,
	\item $\iota((\xi_1,\xi_2)_{{M}}){\omega}=-\hh{\Phi}^*
	(\l \theta^L,\xi_2\r+\l\theta^R,\xi_1\r),\ \ \ \xi_1,\xi_2\in\g$.
	\item  $\ker(T{\Phi})\cap \ker({\omega})=\{0\}$.
\end{enumerate}
The quasi-symplectic form on the conjugate space $M^*$ is simply the opposite form $-\omega$. 
Given two such spaces $(M_i,\omega_i,\Phi_i)$, the quasi-symplectic 2-form ${\omega}$ on 
${M}_1\fus {M}_2$ is given by 
\begin{equation}\label{eq:omegafus}
 \pi^*{\omega}={\omega}_1+{\omega}_2-\hh \l {\Phi}_1^*\theta^L,\,{\Phi}_2^*\theta^R\r\end{equation}
where $\pi\colon {M}_1\times {M}_2\to {M}_1\fus {M}_2$ is the quotient map for the $G_\Delta$-action. This follows from 
the description of the `multiplication morphism' $T$
for the Courant algebroid $\AA_D$ in terms of the splitting, see \cite{al:pur}.
One may verify directly that the right-hand side of \eqref{eq:omegafus} is $G_\Delta$-basic, and that the resulting 2-form 
${\omega}$ satisfies the axioms (i),(ii).  
A direct proof of the  minimal degeneracy condition (iii) seems less straightforward; here the Dirac geometric approach is more convenient. 
%


\section{Examples from moduli spaces}\label{sec:moduliexamples}
In this section, we show that many examples of quasi-symplectic $G$-spaces with $D/G$-valued moment maps arise as moduli spaces of flat bundles over surfaces. In most cases, these are described in terms of their lifts to spaces with 
$D$-valued moment maps. These lifted spaces may be seen as special instances of the very general constructions of Li-Bland and \v{S}evera  \cite{lib:sypo,sev:mod}. See also the recent paper by \'{A}lvarez \cite{alv:poi} for related constructions.

\subsection{\v{S}evera formalism} As before, we denote by $\theta^L,\theta^R$ the Maurer-Cartan forms, and by $\eta\in \Omega^3(D)$ the Cartan 3-form. The 2-form 
\[ \beta=\hh \l d_1^*\theta^L,d_2^*\theta^R\r\in \Omega^2(D\times D)\]
satisfies $\d\beta=d_1^*\eta+d_2^*\eta-(d_1d_2)^*\eta$ under 
group multiplication. Here elements of $D^n$ are denoted $(d_1,\ldots,d_n)$, and $d_i$ is also used to denote projection to the $i$-th factor; thus $d_1\cdots d_i\colon D^n\to D$ is the product map. It will be convenient to introduce $d_0=(d_1d_2)^{-1}$; hence the exterior differential of $\beta$ may be written 
\[ \d\beta=\sum_{i=0}^2 d_i^*\eta.\]
The contractions with the vector fields on $D^2$, for $\zeta\in\dd$, 
\[ \zeta_{(0)}=(-\zeta^R,0),\ \zeta_{(1)}=(\zeta^L,-\zeta^R),\ \zeta_{(2)}=(0,\zeta^L)\] 
are given by 
\[ \iota(\zeta_{(i)})\beta=\hh \l d_i^*\theta^L+d_{i+1}^*\theta^R,\zeta\r,\ \ \ i=0,1,2,\]
with the convention $d_3=d_0$. Note that these formulas become more symmetric if $D^2$ is regarded as the 
submanifold of $D^3$ given by $d_0d_1d_2$. In particular, $\beta$ is unchanged under cyclic permutations of indices: $\beta=\hh \l d_0^*\theta^L, d_1^*\theta^R\r=\hh \l d_2^*\theta^L, d_0^*\theta^R\r$.


We will use the following construction due to 
\v{S}evera
	\cite{sev:mod}.  For any manifold $M$, define a  
	product on $C^\infty(M,D)\times \Omega^2(M)$ by 
	\[ (\Phi_1,\omega_1)\bullet (\Phi_2,\omega_2)=
	\left(\Phi_1\Phi_2,\omega_1+\omega_2-(\Phi_1,\Phi_2)^*\beta\right).\] 
	One checks that $\bullet$ is a group structure, with inverse $(\Phi,\omega)^{-1}=(\Phi^{-1},-\omega)$.  
	Furthermore, $\{e\}\times \Omega^2(M)$ is a central subgroup, and 
	$C^\infty(M,D)\times \Omega^2(M)\to \Omega^3(M),\ (\Phi,\omega)\mapsto \d\omega-\Phi^*\eta$ is a group homomorphism for this group structure.

\subsection{Representation spaces for surfaces with boundary}\label{sec:moduli1}
Let $\Sigma$ be a  compact, connected, oriented surface with non-empty boundary, with a finite set of marked points
$\ca{\V}\subset \p\Sigma$,  referred to as \emph{vertices}. We require at least one vertex on each boundary component.  
The vertices subdivide each boundary component into a finite union of oriented edges; let $\ca{E}$ be the set of such edges. 

Denote by $\Pi(\Sigma)\rra \ca{V}$ the fundamental groupoid, consisting of  homotopy classes of paths $\gamma\colon [0,1]\to \Sigma$ with endpoints in $\V$. The source and target maps are 
$\sz([\gamma])=\gamma(0),\ \tz([\gamma])=\gamma(1)$
and the groupoid multiplication is given by concatenation of paths.
Similarly, let $\Pi(\p\Sigma)\rra \ca{V}$ be the fundamental groupoid of the boundary. 
The inclusion of the boundary defines a groupoid morphism
$\Pi(\p\Sigma)\to \Pi(\Sigma)$. Given a Lie group 
$D$, let 
\[ M_D(\Sigma)=\Hom(\Pi(\Sigma),D)\] be the set of groupoid homomorphisms $\kappa\colon \Pi(\Sigma)\to D,\ [\gamma]\mapsto \kappa_\gamma$. This space is a smooth manifold: a choice of 
generators of $\Pi(\Sigma)$ gives a diffeomorphism to a product of several copies of $D$. 
Similarly, we may define a space $M_D(\p\Sigma)=\Hom(\Pi(\p\Sigma),D)$. Taking $\ca{E}\subset \Pi(\p\Sigma)$ as generators, 
this space is identified with  $D^{\ca{E}}=\on{Map}(\ca{E},D)$. 
Restriction $\Hom(\Pi(\Sigma),D)\to \Hom(\Pi(\p\Sigma),D)$ defines a smooth map 
\begin{equation}\label{eq:mommap} \Phi\colon M_D(\Sigma)\to  M_D(\p\Sigma)=D^{\ca{E}}.\end{equation}
This map is equivariant under the action of 
$D^{\ca{V}}=\on{Map}(\ca{V},D)$, where $d\colon \ca{V}\to D,\ {\mathsf{v}}\mapsto d_{\mathsf{v}}$ acts as 
on $\Hom(\Pi(\Sigma),D)$ as 
\begin{equation}\label{eq:action}
 (d\cdot \kappa)_\gamma=d_{\tz(\gamma)}\ \kappa_\gamma\ d_{\sz(\gamma)}^{-1},\end{equation}
 and similarly on $\Hom(\Pi(\p\Sigma),D)$. 
%
Let us now assume that the Lie algebra $\dd$ of $D$ comes equipped with an invariant metric $\l\cdot,\cdot\r$. 
Denote by $\theta^L,\ \theta^R$ the Maurer-Cartan forms and by $\eta\in \Omega^3(D)$ the Cartan 3-form. 
Let $\dd^{\ca{V}}=\on{Map}(\ca{V},\dd)$ denote the Lie algebra of $D^{\ca{V}}$; for $\zeta\in \dd^{\ca{V}}$ denote by 
$\zeta_\vz\in\dd$ its components.
\begin{theorem} \label{th:basic} 
The space $M_D(\Sigma)$ is a quasi-symplectic $D^{\ca{V}}$-manifold with $D^\E$-valued moment map. That is, 
it carries a $D^{\ca{V}}$-invariant 2-form $\omega$ with the following properties:
\begin{enumerate}
\item $\d\omega=-\sum_{\mathsf{e}\in \ca{E}} \Phi_{\mathsf{e}}^*\eta$. 
\item For $\zeta\in \dd^{\ca{V}}$, 
\[ \iota(\zeta_{ M_D(\Sigma)})\omega=-\hh\sum_{\ez\in \ca{E}}
\left(\l\Phi_{\ez}^*\theta^L,\ \zeta_{\sz(e)}\r+\l\Phi_{\ez}^*\theta^R,\, \zeta_{\tz(e)}\r\right).
\] 
 \item\label{it:c} $\ker(\omega)\cap \ker(T\Phi)=\{0\}$.   
\end{enumerate}
\end{theorem}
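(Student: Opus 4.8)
The plan is to write down the $2$-form $\omega$ explicitly by fusing edge-holonomies with the \v{S}evera product $\bullet$, deduce axioms (a) and (b) formally from the structural properties of $\bullet$, and treat the minimal degeneracy (c) separately, as the only genuinely non-formal point. First I would fix a presentation of $\Sigma$ as a single polygon: since $\Sigma$ is connected with nonempty boundary, it is obtained from a disk $P$ (one $2$-cell) by identifying pairs of boundary arcs, and the cyclic boundary word $f_1 f_2\cdots f_N$ of $\p P$ lists all oriented edges, each free arc being a boundary edge $\ez\in\E$ (appearing once) and each interior arc appearing twice, once as $g$ and once as $g^{-1}$. A homomorphism assigns a holonomy $\Phi_{f_k}\in D$ to each arc, subject only to the relation $\Phi_{f_N}\cdots\Phi_{f_1}=e$ coming from contractibility of $P$; after the identifications this recovers $M_D(\Sigma)$, in agreement with the freeness of $\pi_1(\Sigma)$. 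I would then set
\[
(\,e,\ \omega\,)=(\Phi_{f_N},0)\bullet\cdots\bullet(\Phi_{f_1},0),
\]
the iterated product in cyclic order, whose first component is the constant map $e$ precisely because of the polygon relation.

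Axiom (a) is then immediate. Since $(\Phi,\omega)\mapsto\d\omega-\Phi^*\eta$ is a homomorphism, applying it to the displayed product gives $\d\omega-e^*\eta=-\sum_k\Phi_{f_k}^*\eta$; the two contributions $g^*\eta+(g^{-1})^*\eta=g^*(\eta+\on{inv}^*\eta)=0$ of each interior arc cancel by the inversion-antisymmetry of the Cartan $3$-form, leaving exactly $\d\omega=-\sum_{\ez\in\E}\Phi_\ez^*\eta$. For axiom (b), the $D^{\V}$-action is generated at each vertex by right translations on the holonomies emanating from it and left translations on those terminating at it, so I would contract the defining expression for $\omega$ with these generators and use the contraction formula $\iota(\zeta_{(i)})\beta=\hh\l d_i^*\theta^L+d_{i+1}^*\theta^R,\zeta\r$. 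The telescoping sum of $\beta$-terms collapses, the interior-arc contributions again cancelling in pairs, to the stated formula; this is just the moment map condition for the canonical splitting $\alpha=\hh(-\theta^R,\theta^L)$ of $\AA_D$ on each edge factor. Thus (a) and (b) are pure bookkeeping, driven by the two structural properties of $\bullet$ recorded in the \v{S}evera formalism.

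The main obstacle is the minimal degeneracy (c). The cleanest route, and the one I would follow, is to recognize $M_D(\Sigma)$ as an iterated \emph{internal fusion} of elementary polygon pieces: cutting $\Sigma$ along interior arcs decomposes it into disks, and regluing along a shared edge is exactly the internal fusion that identifies two target $D$-factors and diagonalizes the corresponding $D^{\V}$-factors. The base pieces reduce to the bigon, i.e.\ $D$ with moment map $(g,g^{-1})\colon D\to D\times D$ and $\omega=0$, which manifestly satisfies all three axioms (here $\ker(T\Phi)=0$). Since internal fusion is the reduction of an exact Dirac morphism, it preserves exactness, equivalently quasi-symplecticity; hence (c) is inherited at every gluing step. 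As an alternative, more hands-on argument, I would identify $T_\kappa M_D(\Sigma)$ with the space of $\Ad\circ\kappa$-twisted groupoid $1$-cocycles on $\Pi(\Sigma)$, under which $T\Phi$ becomes restriction to the boundary edges and $\omega_\kappa$ is computed from the explicit $\beta$-terms; the radical of $\omega_\kappa$ on $\ker(T\Phi)$ is then matched with a relative cohomology group that vanishes by the nondegeneracy of the Poincar\'e--Lefschetz pairing between $H^1(\Sigma;\dd)$ and $H^1(\Sigma,\p\Sigma;\dd)$. In either route the delicate point is keeping track of the boundary data under the identifications, so that the degenerate directions of $\omega$ are matched exactly with the orbit directions permitted by $\ker(T\Phi)$; this is where the Dirac-geometric formulation, which encodes (c) as mere exactness preserved under reduction, is decisively more convenient than a direct form computation.
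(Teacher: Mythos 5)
Your treatment of (a) and (b) is correct and matches the computational route the paper only alludes to: the paper defines $\omega$ by the same formula \eqref{eq:severa}, extended to cut surfaces by setting $\Phi_{\ez'}=\Phi_{\ez}^{-1}$ on paired edges, and your derivation of (a) from the homomorphism property of $(\Phi,\omega)\mapsto \d\omega-\Phi^*\eta$ together with the anti-invariance of $\eta$ under inversion, and of (b) from the contraction formulas for $\beta$, is sound. The paper itself does not carry out these contractions; it instead proves the stronger Dirac-geometric statement (Theorem \ref{th:basic2}) that $\T_\omega\Phi$ is an exact Dirac morphism to $(\AA^\E,D^\E\times\dd^\V)$, from which (a)--(c) follow simultaneously, so your direct verification is a legitimate, more elementary substitute for that part.

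The gap is in (c), which you rightly single out as the only non-formal point. Your primary route misidentifies the gluing operation: regluing two boundary arcs $\ez,\ez'$ of the cut surface is \emph{not} internal fusion. Internal fusion is a quotient by a diagonal subgroup, with moment map the pointwise product (topologically, the attachment of a strip to two vertices); gluing is the restriction to the level set $\Phi_{\ez'}=\Phi_{\ez}^{-1}$, i.e.\ to the preimage of the anti-diagonal orbit in $D\times D$, with no quotient taken and with both glued components disappearing from the moment map target rather than being combined into their product. Moreover, for uncolored surfaces the Lagrangian $\dd^\V$ is not a product of edge-wise Lagrangians, so the fusion machinery of Section \ref{sec:fusion} does not even apply factor by factor. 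The claim that exactness ``is inherited at every gluing step'' therefore needs a different justification; in the paper this is precisely the content of the Dirac Cross Section Theorem \ref{th:cross} together with Remark \ref{rem:standard}, applied to the Dirac morphism of the polygon (which is an orbit as in Example \ref{ex:orbits}, hence automatically quasi-symplectic). Relatedly, your base case cannot be the bigon: polygons do not decompose into bigons under any of these operations, and what saves the polygon case is simply that $\Phi$ is an embedding there, so $\ker(T\Phi)=0$. Your alternative route via $\Ad\circ\kappa$-twisted cocycles and the Poincar\'e--Lefschetz pairing between $H^1(\Sigma;\dd)$ and $H^1(\Sigma,\p\Sigma;\dd)$ is a genuinely viable and genuinely different argument, closer in spirit to \cite{al:mom}, and the needed vanishing of $H^0(\Sigma,\p\Sigma;\dd)$ does hold since every component of $\Sigma$ has nonempty boundary; but as written it is only a sketch --- the identification of $\omega$ with the cup-product pairing modulo boundary terms is the substantive computation and would have to be carried out.
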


In terms of a formulation with bivector fields, the moduli spaces $M_D(\Sigma)$ are studied in the work of 
Li-Bland and \v{S}evera \cite{lib:sypo}. One way of constructing the 2-form starts from the Atiyah-Bott symplectic 2-form $\omega_{AB}$ on a moduli space of flat $D$-bundles over $\Sigma$, with trivializations at the vertices, and up to gauge transformations that are trivial along the edges. 
The method of \cite{al:mom,cab:dir} shows how to descend  $\omega_{AB}$ to a 2-form (no longer closed) on the space of holonomies. In particular \cite{cab:dir} explains the nondegeneracy condition \eqref{it:c} in terms of reduction of Dirac morphisms.

Consider first the case that 
$\Sigma$ is a \emph{disk}. Choose an initial vertex $v\in \ca{V}$, and label the edges clockwise as ${\mathsf{e}}_1,\ldots,{\mathsf{e}}_r$ (so, $\sz({\mathsf{e}}_{i+1})=\tz({\mathsf{e}}_i)$). 
	\begin{center}
	\includegraphics[scale=0.4]{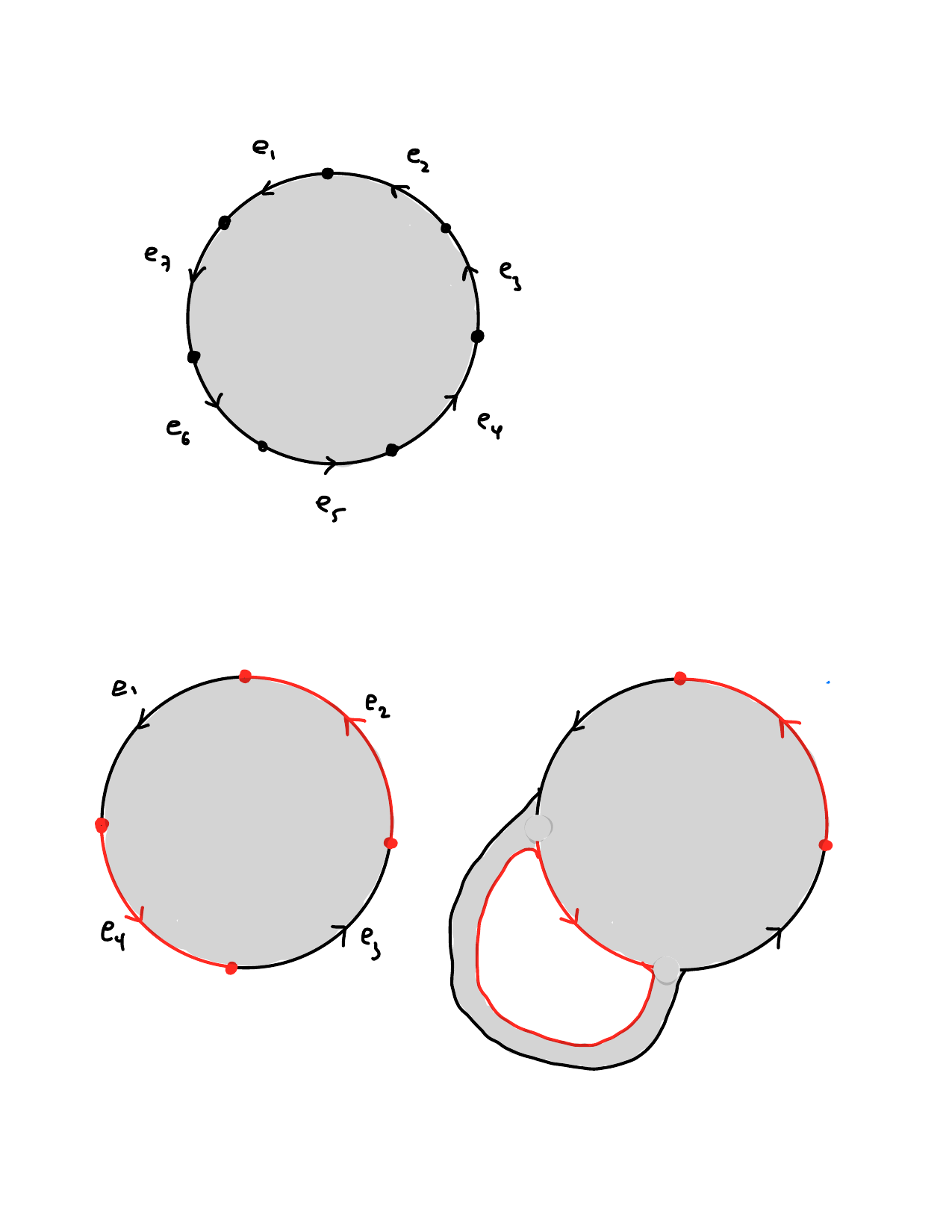}
\end{center}
Then $M_D(\Sigma)\subset D^r$ is the submanifold given by $\prod d_i=e$, 
with moment map $\Phi=(\Phi_1,\ldots,\Phi_r)$ the inclusion. Using \v{S}evera's formalism, 
the 2-form $\omega\in \Omega^2(M_D(\Sigma))$ is  given by 
\begin{equation} \label{eq:severa}
(e,\omega)=(\Phi_1,0)\bullet\cdots \bullet (\Phi_r,0).
\end{equation}
%
The case of a general surface $\Sigma$ can be obtained from that of a disk by making boundary identifications. 
To this end, cut the surface along paths, with end points in $\V$, until it becomes a disk (equivalently,  polyhedron) 
$\Sigma^c$. The quotient map $\Sigma^c\to \Sigma$
restricts to a surjective map $\V\to \V^c$ of vertices, and defines a surjective groupoid morphism $\Pi(\Sigma^c)\to \Pi(\Sigma)$. This defines an embedding  
\begin{equation}\label{eq:embedding}
j\colon M_D(\Sigma)\to M_D(\Sigma^c).\end{equation}
The 2-form $\omega\in M_D(\Sigma)$ is simply the pullback of the 2-form on $ M_D(\Sigma^c)$ under this embedding. 
In other words, it is given by the same formula \eqref{eq:severa}, by letting 
$\Phi_i=\Phi_j^{-1}$ for any two paired edges $\ez_i,\ez_j$ -- the moment map components $\Phi_\ez$  are the components $\Phi_i$ for non-paired edges. The 2-form $\omega$ does not depend on the choice of cutting used in the construction of $\Sigma^c$. 
One possible proof of this fact uses \eqref{eq:severa} to check that `elementary moves' (cutting the polygon along diagonals separating paired edges $\ez_1,\ez_2$, and re-gluing along these paired edges) do not change the 2-form. 

Alternatively, the 2-form may be described using a Dirac-geometric approach -- see Appendix \ref{app:D}. In particular, the non-degeneracy property \eqref{it:c} is automatic from that approach. In a nutshell, when $\Sigma$ is a disk, one may regard $M_D(\Sigma)$ as an orbit 
for a Dirac structure $E^{(r)}$ inside $(D\times (\ol{\dd}\oplus \ol{\dd}))^r$. 
In the general case one obtains $M_D(\Sigma)$ as a Dirac-geometric cross section.

\subsection{Surfaces with colored boundary}\label{sec:moduli2}
Suppose now that $\g\subset \dd$ is a Lagrangian Lie subalgebra of $\dd$, integrating to a closed Lie subgroup $G\subset D$. 
The pullback of the Cartan 3-form $\eta\in \Omega^3(D)$ to $G$ vanishes, since $\g$ is Lagrangian.   
Suppose $\Sigma$ (as in the previous section) has an even number of edges for each boundary component, alternatingly labeled as 
`free' or `colored':
\[ \E=\E_{\on{col}}\sqcup \E_{\on{free}}.\]  
A picture of such a surface is given in the introduction. 
Every vertex is either the source or the target of a uniquely defined free edge. That is, 
\begin{equation}\label{eq:st} 
\V=\tz( \E_{\on{free}})\sqcup \sz( \E_{\on{free}}).
\end{equation}
	
The submanifold $N\subset  D^\E$ 
of maps $\E\to D$ taking colored edges to $G$ is transverse to the $D^\V$-orbits, and hence 
is transverse to the moment map $\Phi\colon M_D(\Sigma)\to D^\E$. Since $N$ is $G^\V$-invariant, 
it follows that the pre-image
$ M^{\on{col}}_D(\Sigma)=\Phi^{-1}(N)$
is a $G^{\ca{V}}$-invariant submanifold 
\begin{equation}\label{eq:i}
 i\colon M^{\on{col}}_D(\Sigma)\to M_D(\Sigma).\end{equation}
Thus, $M^{\on{col}}_D(\Sigma)$ consists 
of all those homomorphims $\kappa\colon \Pi(\Sigma)\to D$ such that $\kappa(\ez)\in G$ for each colored edge $\ez\in\E_{\on{col}}$.  This space carries a $G^{\ca{V}}$-invariant 2-form $\omega^{\on{col}}$ given by pullback of $\omega$, and 
a map 
\begin{equation}\label{eq:coloredmoment} \Phi^{\on{col}}\colon M_D^{\on{col}}(\Sigma)\to D^{\ca{E}_{\on{free}}}\end{equation}
taking $\kappa$ to its values on the \emph{free} edges. By \eqref{eq:st}, $ G^{\ca{V}}=(G\times G)^{\E_{\on{free}}}$.

\begin{theorem}
	The space $(M_D^{\on{col}}(\Sigma),\omega^{\on{col}})$ is a quasi-symplectic $(G\times G)^{\E_{\on{free}}}$-space with $D^{\E_{\on{free}}}$-valued moment map \eqref{eq:coloredmoment}. Equivalently, there is an exact $(G\times G)^{\E_{\on{free}}}$-equivariant morphism of Manin pairs 
\[  R^{\on{col}}\colon \left(\T M^{\on{col}}_D(\Sigma),TM^{\on{col}}_D(\Sigma)\right)\da \left(\AA^{\E_{\on{free}}} ,D^{\E_{\on{free}}}\times (\g\oplus \g)^{\E_{\on{free}}}\right).\]
\end{theorem}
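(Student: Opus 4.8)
The plan is to prove the second, Dirac-morphism formulation; by Lemma~\ref{lem:exactly} and Proposition~\ref{prop:qvaluedmomentmaps}, applied to the canonical $D\times D$-equivariant splitting of $\AA_D$ recalled in Section~\ref{subsec:quasisymplectic}, this is the same as exhibiting a $2$-form $\omega^{\on{col}}$ together with the map $\Phi^{\on{col}}$ of \eqref{eq:coloredmoment} satisfying the three conditions of Section~\ref{subsec:quasisymplectic} for the Manin pair $(\ol\dd\oplus\dd,\g\oplus\g)^{\E_{\on{free}}}$. I would take $\omega^{\on{col}}=i^*\omega$, with $i$ the inclusion \eqref{eq:i} and $\omega$ the $2$-form of Theorem~\ref{th:basic}. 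The guiding principle is that the colored edges contribute nothing to either condition, so that only the free edges survive; the combinatorial identity \eqref{eq:st}, which yields $G^{\V}=(G\times G)^{\E_{\on{free}}}$, then matches the surviving data to the asserted target.

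Conditions (i) and (ii) I expect to follow by direct pullback. For (i), pulling back the first property of Theorem~\ref{th:basic} gives $\d\omega^{\on{col}}=-\sum_{\ez\in\E}(\Phi_{\ez}\circ i)^*\eta$; for a colored edge $\Phi_{\ez}\circ i$ factors through $G$, and since $\g$ is Lagrangian the Cartan $3$-form pulls back to zero on $G$, leaving only the free-edge terms. For (ii), since $M^{\on{col}}_D(\Sigma)$ is $G^{\V}$-invariant the generators $\zeta_{M}$ for $\zeta\in\g^{\V}$ are tangent to it, so that $\iota(\zeta_{M^{\on{col}}})\,\omega^{\on{col}}=i^*\,\iota(\zeta_{M})\omega$. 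In the sum coming from the second property of Theorem~\ref{th:basic}, each colored edge contributes terms $\l(\Phi_{\ez}\circ i)^*\theta^L,\zeta_{\sz(\ez)}\r$ and $\l(\Phi_{\ez}\circ i)^*\theta^R,\zeta_{\tz(\ez)}\r$; on $G$ the Maurer--Cartan forms take values in $\g$, and $\zeta_{\sz(\ez)},\zeta_{\tz(\ez)}\in\g$, so these pairings vanish by isotropy of $\g$. What survives is precisely condition (ii) of Section~\ref{subsec:quasisymplectic} for $\AA_D^{\E_{\on{free}}}$, once \eqref{eq:st} is used to read off the two $\g$-factors at each free edge as $\zeta_{\tz(\ez)}$ and $\zeta_{\sz(\ez)}$.

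The minimal degeneracy condition (iii), equivalently exactness of $R^{\on{col}}$, is the heart of the matter, and a direct kernel computation is awkward; instead I would argue by reduction of Dirac morphisms. By Theorem~\ref{th:basic} and Lemma~\ref{lem:exactly}, $M_D(\Sigma)$ carries an exact $D^{\V}$-equivariant Dirac morphism $R$ into $(\AA_{D^\E},E_{D^\E})$, in particular a $G^{\V}$-equivariant one. Transversality of $N$ to the $D^{\V}$-orbits makes $\Phi$ transverse to $N$, so $M^{\on{col}}_D(\Sigma)=\Phi^{-1}(N)$ is a submanifold, $\T M^{\on{col}}_D(\Sigma)$ is the pullback Courant algebroid along $i$, and restricting $R$ over $N$ yields an exact morphism into the restriction of $(\AA_{D^\E},E_{D^\E})$ to $N$. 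It then remains to reduce this restricted target along the colored edges. Guided by the single--free--edge model, in which $M^{\on{col}}_D(\Sigma)=G$ is exactly the $(\g\oplus\g)$-orbit through $e\in D$ and hence quasi-symplectic with inclusion moment map by Example~\ref{ex:orbits}, I would treat the colored constraints together with the redundant symmetry at colored vertices as isotropic generators and apply Proposition~\ref{prop:part2} to reduce them away. Since reduction of Dirac morphisms preserves exactness, the outcome is the exact $(G\times G)^{\E_{\on{free}}}$-equivariant morphism $R^{\on{col}}$ with the stated target, and its exactness is condition (iii).

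The step I expect to be the main obstacle is this target reduction at the colored edges: one must verify that imposing $\Phi_{\ez}\in G$ on colored edges, combined with quotienting the isotropic generators, genuinely removes the colored factors and replaces the free-edge Lagrangian $\dd_\Delta$ by $\g\oplus\g$, with no leftover contribution, and that exactness is preserved throughout. The combinatorics of \eqref{eq:st} is indispensable here, since it guarantees that each vertex meets exactly one free and one colored edge, so that the residual groups close up to exactly $(G\times G)^{\E_{\on{free}}}$. Because this bookkeeping mixes edges through shared vertices rather than acting edge-by-edge, the reduction is cleanest when organized through the Dirac cross-section construction of Appendix~\ref{app:D}, which builds $M^{\on{col}}_D(\Sigma)$ directly and makes (iii) automatic; that is the route I would ultimately take to keep the argument uniform over all surfaces $\Sigma$.
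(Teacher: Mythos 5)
Your proposal is correct and follows essentially the same route as the paper: the paper's proof (carried out in Appendix~\ref{app:D}) applies the Dirac Cross Section Theorem to restrict the uncolored morphism $R$ over the preimage of $N$ and then composes with a morphism killing the colored-edge factors, using exactly the identity $(\g\oplus\g)^{\E_{\on{free}}}\cong\g^{\V}$ from \eqref{eq:st} to verify the Dirac property of the composite, so that the minimal degeneracy condition comes for free. The only cosmetic difference is that the colored-edge reduction is effected not via Proposition~\ref{prop:part2} but by composing with the Courant morphism $\AA^{\ez}\da 0$ determined by the Lagrangian $L_D=D\times(\g\oplus\g)$ at each colored edge; your direct pullback verification of conditions (i)--(ii) is a correct supplementary check.
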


Hence, the quotient by $(\{e\}\times G)^{\E_{\on{free}}}$ is a quasi-symplectic $G^{\E_{\on{free}}}$-space with $(D/G)^{\E_{\on{free}}}$-valued moment map. 
\begin{proof} 
Our proof will use the Dirac-theoretic formulation, similar to Theorem \ref{th:basic2}. 
We obtain $R^{\on{col}}$ as a composition of the Dirac morphisms 
\[ R^{\on{col}}=T\circ R\circ \T i\]
where $\T i$ is defined by the inclusion \eqref{eq:i}, $R$ is the Dirac morphism \eqref{eq:basic2} for the uncolored surface, and 
\begin{equation}\label{eq:T1}
 T\colon (\AA^\E,D^\E\times \dd^\V)\da \left(\AA^{\E_{\on{free}}} ,D^{\E_{\on{free}}}\times (\g\oplus \g)^{\E_{\on{free}}}\right)
 \end{equation}
is defined as the product over all $T^\ez$ for $\ez\in \E$, given as identity morphism of $\AA^\ez$
for each free edge and as the morphism
$T^\ez\colon \AA^\ez\da 0$ defined by 
$L_D\subset \AA$ for each colored edge. To see that that this is indeed a Dirac morphism, 
recall that $(\g\oplus \g)^{\E_{\on{free}}}\cong \g^\V$, since every vertex is either the source or the target of a 
unique free edge.  Using this observation, we see that every element of $\{\pi(d)\}\times (\g\oplus \g)^{\E_{\on{free}}}$ is 
$T$-related to a unique element of $\{d\}\times \dd^\V$. 
\end{proof}

\subsection{Examples}
We now discuss various examples of the spaces $(M_D^{\on{col}}(\Sigma),\omega^{\on{col}})$ introduced above. 

\begin{example}[2-gon]
Suppose first that $\Sigma$ is a disk, whose boundary is subdivided into two edges $\ez_1,\ez_2$, where $\ez_1$ is free and $\ez_2$ is colored. 
	\begin{center}
	\includegraphics[scale=0.5]{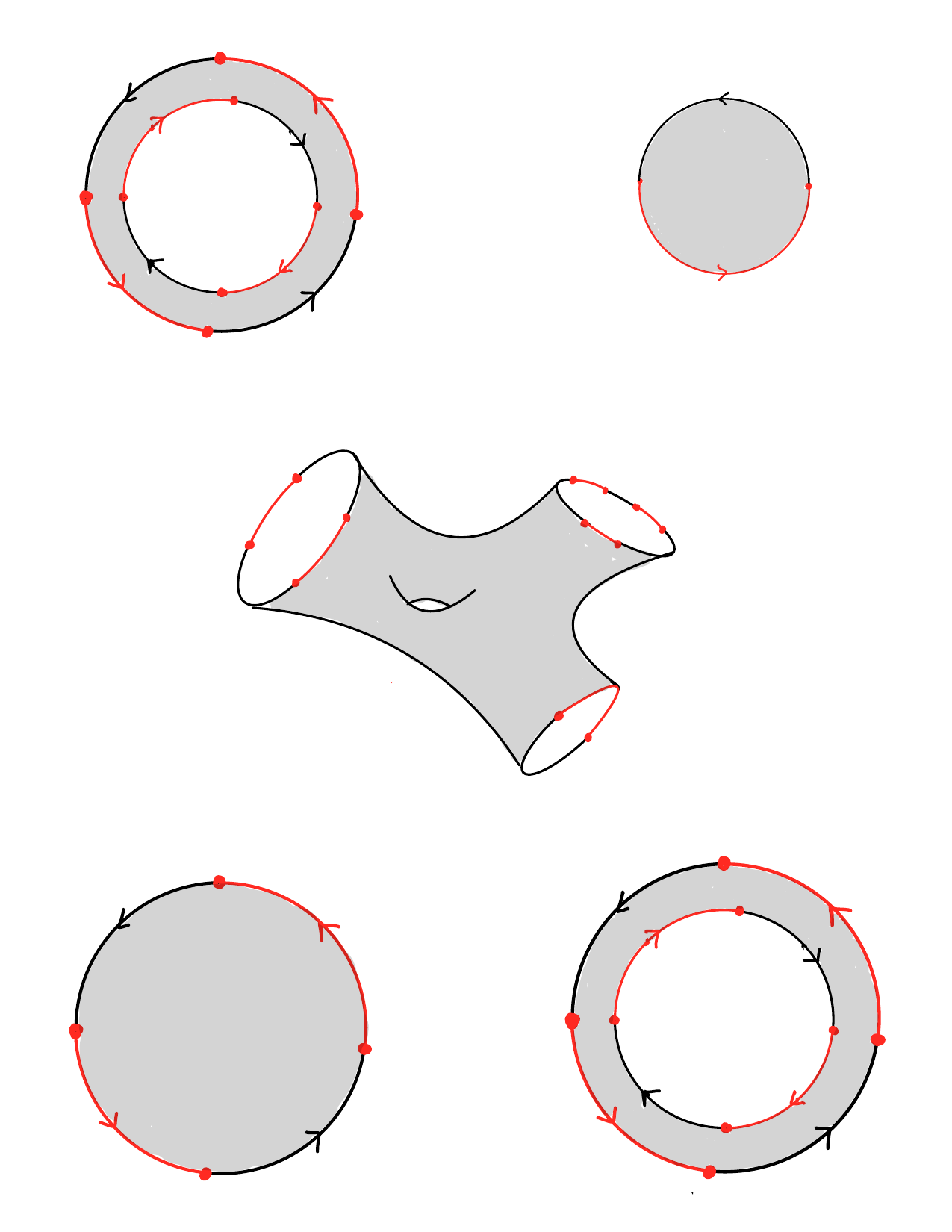}
\end{center}
We obtain a quasi-symplectic $G\times G$-space $M_D^{\on{col}}(\Sigma)$ consisting of 
pairs $(d,g)$ with $d\in D,\ g\in G$ and $dg=e$; the moment map takes such a pair to $d$. 
It follows that 
\[ M_D^{\on{col}}(\Sigma)= G,\ \ \omega^{\on{col}}=0\]
with $G\times G$-action on $d\in G$ given by $(a,a')\cdot d=ad(a')^{-1}$ and 
moment map the inclusion 
$G\hra D$. The corresponding $D/G$-valued space is $G/G=\pt$. 
\end{example}

\begin{example}[4-gon]
Consider next the case of a disk with four boundary edges, labeled clockwise as $\ez_1,\ldots,\ez_4$, where the even edges are colored.  

\begin{center}
	\includegraphics[scale=0.3]{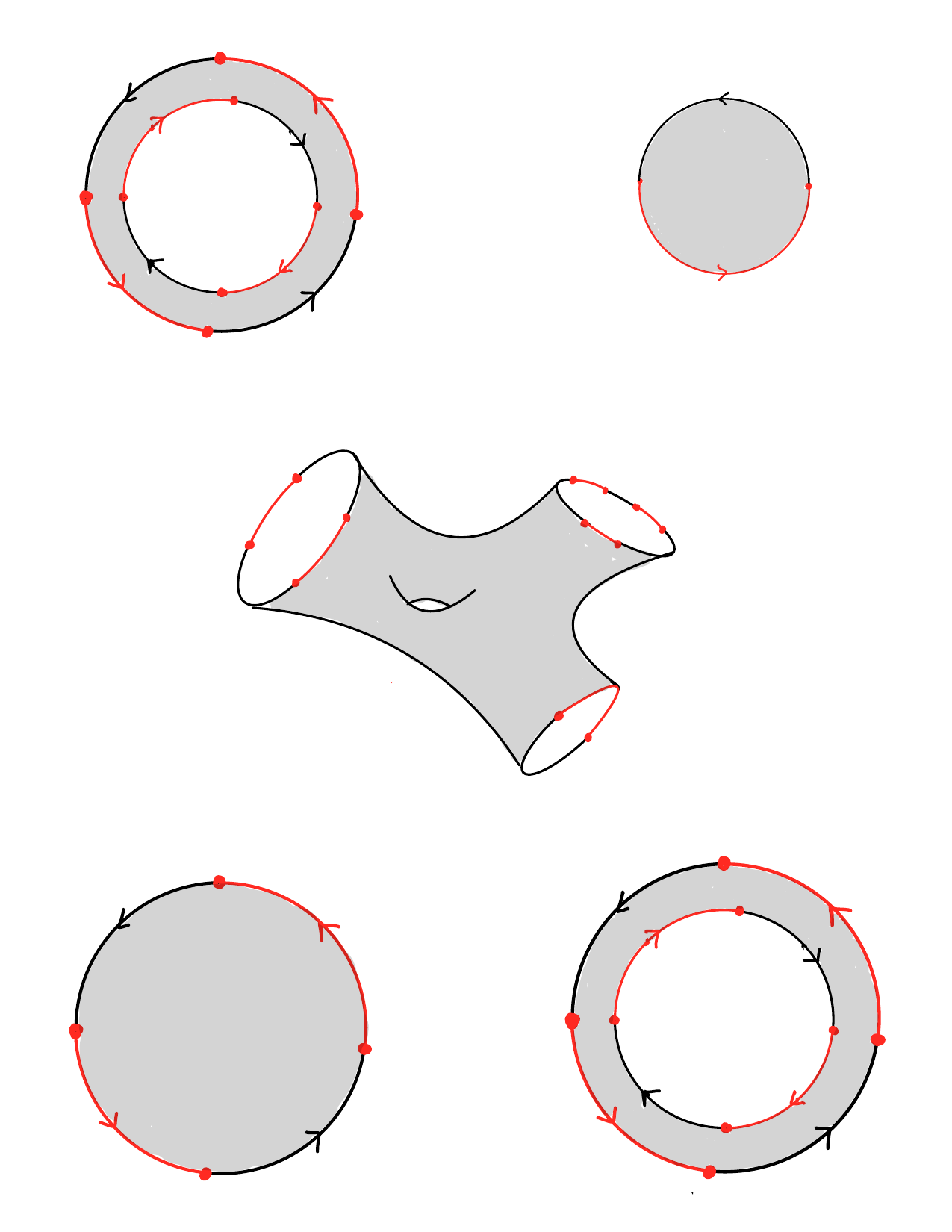}
\end{center}
The associated quasi-symplectic $(G\times G)^2$-space  $M_D^{\on{col}}(\Sigma)$ 
consists of 4-tuples $(d_1,g_1,d_2,g_2)$ with $d_1g_1d_2g_2=e$. Using $d=d_1,g_1,g_2$ as parameters, we obtain 
\[ M_D^{\on{col}}(\Sigma)\cong D\times G^2\]
with $(G\times G)^2$-action 
\[ (a_1,a_1',a_2,a_2')\cdot (d,g_1,g_2)=\left(a_1 d (a_1')^{-1},\,a_1'g_1a_2^{-1},\ a_2' g_2 a_1^{-1}\right),\]
moment map $(d,g_1,g_2)\mapsto (d,\ g_1^{-1}d^{-1} g_2^{-1})$, and 2-form 
\[ \omega=-\hh\left(\l d^*\theta^L,g_1^*\theta^R\r-\l d^*\theta^R,g_2^*\theta^L\r-
\l \Ad_d g_1^*\theta^R,g_2^*\theta^L\r
\right).\]
This space is the quasi-symplectic groupoid $(G\times G)\ltimes D\rra D$ discussed in Section \ref{sec:integration}.
\end{example}

\begin{example}[2n-gon] \label{ex:2ngon}
More generally, suppose $\Sigma$ is a disk whose boundary is subdivided into $2n$ edges, listed clockwise as 
$\ez_1,\ldots,\ez_{2n}$. Suppose the even edges are colored. 
 Then 
\[ M_D^{\on{col}}(\Sigma)=\{(d_1,g_1\ldots,d_n,g_{n})\in D^{2n}|\ d_{1}g_1\cdots g_nd_{n}=e,\ d_{i}\in D,\ g_i\in G\}\cong D^{n-1}\times G^n.\]
The moment map is given by projection to $(d_1,d_2,\ldots,d_{n})$.   The quotient under the action of $(\{e\}\times G)^n$ is identified with 
$D^n$, using the quotient map $(d_1,g_1\ldots,d_n,g_{n})\mapsto (d_1g_1,\ldots,d_ng_n)$. Hence, after renaming the variables, 
the resulting quasi-symplectic $G^n$-space with 
$(D/G)^n$-valued moment map is given by 
\[ M=\{(d_1,\ldots,d_n)\in D^n|\ d_1\cdots d_n=e\}\cong D^{n-1}\]
with the $G^n$-action 
\[ (a_1,\ldots,a_n).(d_1,\ldots,d_{n})=\left(a_1d_1a_2^{-1},\ldots,a_{n-1}d_{n-1}a_n^{-1},a_nd_na_1^{-1}\right)\]
and the moment map 
\[ M\to D/G\times \cdots \times D/G,\ \ (d_1,\ldots,d_n)\mapsto (d_1G,\ldots,d_nG).\]
These examples are similar to the \emph{mixed Poisson structures} of Lu-Mouquin \cite{lu:mix}. In particular, given a 
Lagrangian Lie subalgebra $\h$ complementary to $\g$, one obtains a Poisson structure on $D^n$ such that the given 
$G^n$-action is a Poisson Lie group action. 
\end{example}
\medskip 

One obtains more complicated examples by considering surfaces of higher genus or with more boundary components. 
\smallskip 
	
The fusion of moduli spaces for two colored surfaces is again a moduli space for a colored surface. The fusion operation may be depicted as follows: 
	\begin{center}
		\includegraphics[scale=0.3]{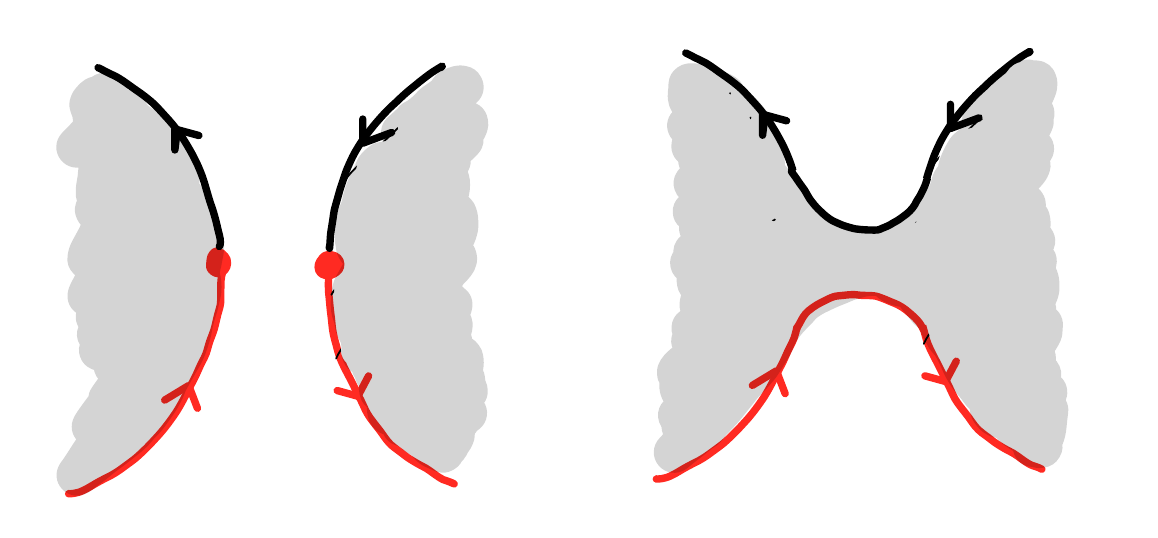}
	\end{center}
It may be viewed as the attachment of a bi-colored strip to the given pair of vertices, followed by removal of these vertices. Fusion of a $2k$-gon with a $2\ell$-gon gives a $2(k+\ell)-2$-gon. In particular,  the 2-gon acts as the identity under fusion, as illustrated below:
\begin{center}
\includegraphics[scale=0.38]{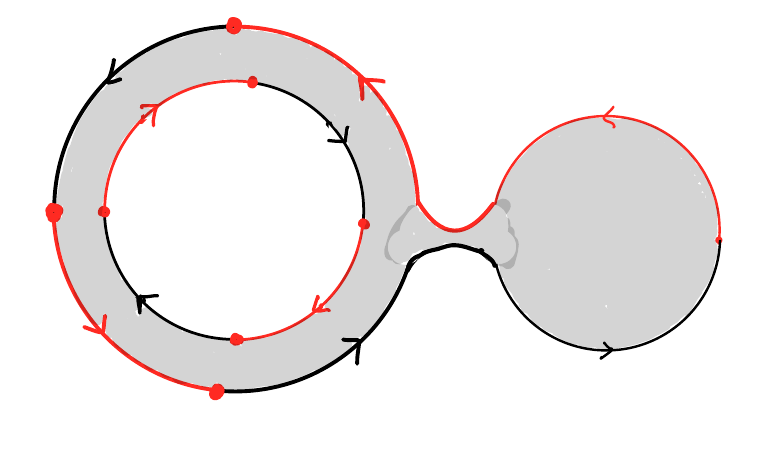}
\end{center}
One can obtain all (compact, connected, oriented) colored surfaces, except for the 2-gon, 
by fusions of 4-gons, provided that one also uses 
\emph{internal fusion} of two free edges of a given connected colored surface.  For example, the $2n$-gon moduli space (Example \ref{ex:2ngon}) is obtained by fusing $n-1$ copies of the 4-gon example. Internal fusion of the $2n$-gon moduli space 
produces 
a genus 0 surface with two boundary components, with $n_1$, $n_2$ free edges where $n_1+n_2=n-1$. 
Here, it is possible to have $n_1=0$ or $n_2=0$, corresponding to colored surfaces of a slightly more general kind than what was considered above: There may be colored boundary  components not containing vertices. See the following example.

\begin{example}
Consider again the case that $\Sigma$ is a 4-gon. Let $\wh{M}\cong D\times G^2$ be the 
corresponding quasi-symplectic $(G\times G)^2$-space $\wh{M}$ 
with $D^2$-valued moment map. 
By fusing the vertices $\tz(\ez_1)$ and $\sz(\ez_3)$,  
we	obtain a quasi-symplectic $G\times G$-space $\wh{M}_{\on{fus}}$ 
with $D$-valued moment map. This fused space  is a moduli space of the following colored surface: 
\begin{center}
	\includegraphics[scale=0.3]{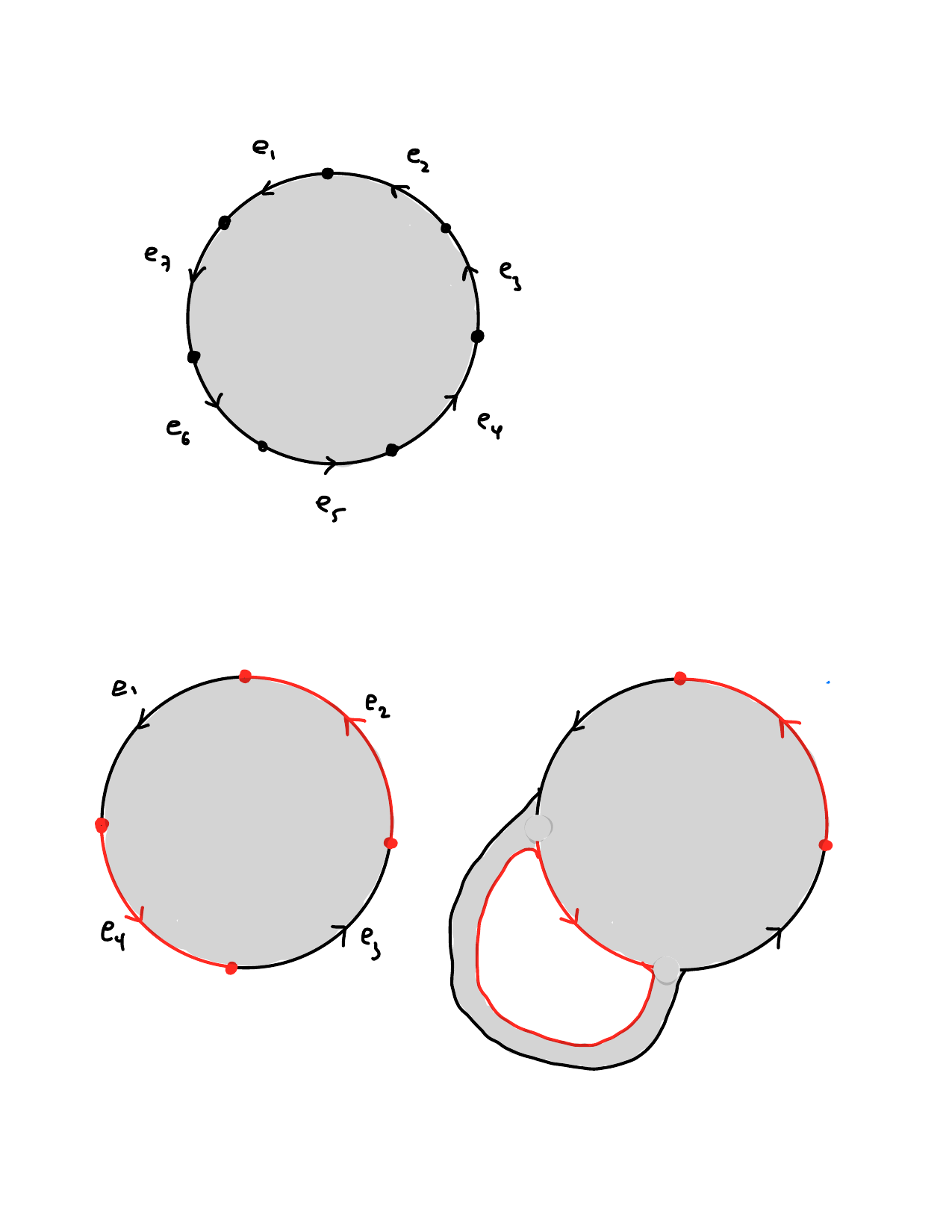}
\end{center}
More explicitly, $\wh{M}$ is the quasi-symplectic 
$(G\times G)^2$-space 
\[ \wh{M}=(d_1,g_1,d_2,g_2)|\ \d_1g_1d_2g_2=e\}\cong D\times G^2.\]
The fusion is described as the quotient under the action 
\[ h.(d_1,g_1,d_2,g_2)=(hd_1,g_1,d_2h^{-1},hg_2h^{-1}).\]
Hence, using $g_1,d_1,g_2$ as parameters on $\wh{M}$, we have 
$\wh{M}_{\on{fus}}\cong G\times (D\times G)/G$. The moment map and $G\times G$-action 
for the fused space are 
\[ \wh{\Phi}_{\on{fus}}\colon \wh{M}_{\on{fus}}\to D,\ \ \ 
(g_1,[d_1,g_2])\mapsto g_1^{-1}\Ad_{d_1^{-1}}g_2^{-1}\]
and  
\[ (a,a')\cdot  (g_1,[d_1,g_2])=(a' g_1 a^{-1},\,[d_1(a')^{-1},\ g_2]).\]
Taking the quotient by the $\{e\}\times G$-action, we obtain the space 
\[ M_{\on{fus}}=(D\times G)/G\]
(the bundle associated to the principal $G$-bundle $D\to D/G$ by the adjoint action of $G$ on itself -- that is, the bundle of gauge transformations), with moment map $[d,g]\mapsto (d^{-1}g^{-1}d)G$. 
\end{example}

Reduction of a space $M_D^{\on{col}}(\Sigma)$ with respect to one of the factors $D^\ez$, $\ez\in \E_{\on{free}}$ is given by 
\[ (\Phi^{\ez})^{-1}(G)/(G\times G),\]
where $\Phi^\ez$ is the component of the moment map corresponding to $\ez$. The reduced space is the space 
$M_D^{\on{col}}(\Sigma')$, where $\Sigma'$ is obtained from $\Sigma$ by removing the free edge $\ez$ along with its vertices: 
\begin{center}
	\includegraphics[scale=0.35]{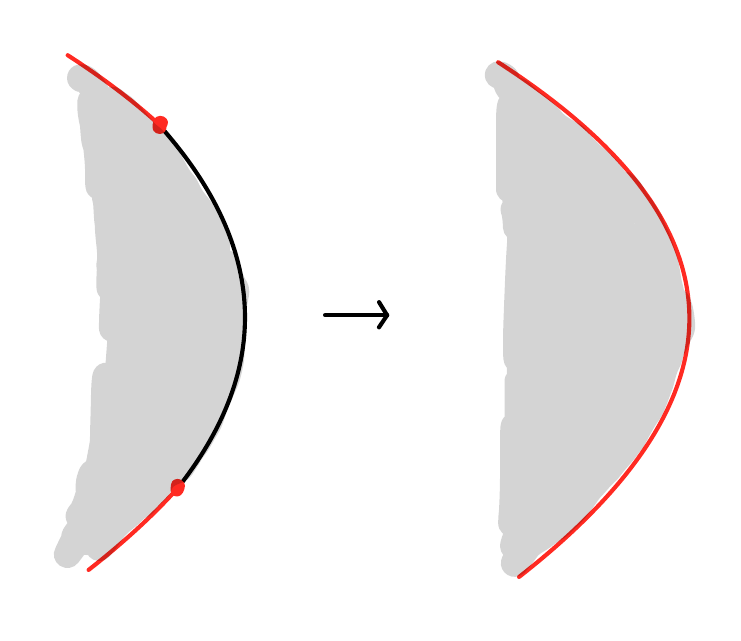}
\end{center}
Again, the resulting surface may have some colored boundary components with no vertices.

\begin{appendix}
	
\section{Background on Dirac geometry}\label{app:A}
\subsection{Dirac structures}
A \emph{Courant algebroid} 
\cite{cou:di,liu:ma}  is a vector bundle
$\AA\to M$ with a (non-degenerate)  fiber metric $\l\cdot,\cdot\r\colon \AA\times _M\AA\to \R$, an anchor map $\a\colon \AA\to TM$, and a bracket $\Cour{\cdot,\cdot}\colon \Gamma(\AA)\times \Gamma(\AA)\to \Gamma(\AA)$ on the space of sections. 
These are required to satisfy the following axioms, for all sections $\sigma_i\in \Gamma(\AA) $:
\begin{enumerate}
	\item $\Cour{\sigma_1,\Cour{\sigma_2,\sigma_3}}
	=\Cour{\Cour{\sigma_1,\sigma_2,\sigma_3}}+\Cour{\sigma_2,\Cour{\sigma_1,\sigma_3}}	$, 
	\item $\L_{\a(\sigma_1)}\l\sigma_2,\sigma_3\r=\l\Cour{\sigma_1,\sigma_2},\sigma_3\r+\l\sigma_2,\Cour{\sigma_1,\sigma_3}\r$,
	\item  $\Cour{\sigma_1,\sigma_2}+\Cour{\sigma_2,\sigma_1}=\a^*\d \l\sigma_1,\sigma_2\r$.
\end{enumerate}
Here the last line uses the metric to identify $\AA^*\cong \AA$. A \emph{Dirac structure} $(\AA,E)$ over $M$ is a Courant algebroid 
$\AA\to M$ together with a  Lagrangian subbundle $E\to M$ whose space of sections is closed under the Courant bracket. 
(If $\AA$ is given, one refers to $E$ as a Dirac structure.) 
A Courant algebroid over $M=\pt$ is just a metrized Lie algebra $\dd$, and a Dirac structure over $M=\pt$ is a Manin pair 
$(\dd,\g)$.  For this reason, Dirac structures $(\AA,E)$ are also called \emph{Manin pairs} \cite{liu:ma}. 

Let $\eta\in \Omega^3(M)$ be a closed 3-form on $M$.  
The \emph{standard ($\eta$-twisted) Courant algebroid}  is the direct sum 
$\T_\eta M=TM\oplus T^*M$, with metric given by the pairing, with anchor the projection to the first summand, and with 
Courant bracket on sections given by 
\begin{equation}\label{eq:courantbracket}
\Cour{X_1+\alpha_1,X_2+\alpha_2}=[X_1,X_2]+\L_{X_1}\alpha_2-\iota_{X_2}\d \alpha_1
+\iota_{X_1}\iota_{X_2}\eta.\end{equation}
For $\eta=0$ we also write $\T M$.
The graph $\on{gr}(\pi)$ of a Poisson bivector field is an example of a Dirac structure in $\T M$, as is the graph of any closed 2-form. 

 Another  type of examples are the \emph{action Courant algebroids} \cite{lib:cou}, for any action of a metrized Lie algebra $\dd$ on $M$ for which all stabilizers are coisotropic. As a vector bundle, the action Courant algebroid is $M\times \dd$, the metric and bracket extend the given ones on constant sections $\dd\subset \Gamma(M\times\dd)$, and the anchor is the action map $M\times \dd\to TM$. Given a Lagrangian Lie subalgebra $\g\subset \dd$, one obtains a Dirac structure 
 $(M\times \dd,\ M\times \g)$.

\subsection{Morphisms}

\begin{definition}\cite{al:der} (See also  \cite{al:pur,bur:cou,lib:dir}.) 
	\begin{enumerate}[label=(\roman*)]
		\item\label{it:i}
		Let $\AA_1,\AA_2$ be Courant algebroids over manifolds $M_1,M_2$.  A \emph{Courant morphism} 	
		\[ R\colon \AA_1\da \AA_2\]
		is a smooth map $\Phi\colon M_1\to M_2$ together with a Lagrangian subbundle $R\subset \AA_2\times \ol{\AA}_1$ 
		along $\on{gr}(\Phi)\subset M_2\times M_1$, with the property that the image of $R$ under the anchor is tangent to $\on{gr}(\Phi)$, and such that 
		the space of sections of $\AA_2\times \ol{\AA}_1$ whose restrictions to $\on{gr}(\Phi)$ takes values in $R$, is closed under 
		Courant bracket. Here $\ol{\AA}$ denotes the Courant algebroid obtained from $\AA$ by changing the sign of the metric.
		\item\label{it:ii} Let  $(\AA_i,E_i),\ i=1,2$ be Dirac structures over manifolds $M_i$. A \emph{Dirac morphism} 
		or \emph{morphism of Manin pairs} 
		\[ R\colon (\AA_1,E_1)\da (\AA_2,E_2)\]
		is a Courant morphism $R\colon \AA_1\da \AA_2$, with the property that $R\cap (E_2\times E_1)$ is the graph of a 
		bundle map $\Phi^*E_2\to E_1$. 
	\end{enumerate} 
\end{definition}
 The dotted arrow is used since $R$ is not a bundle map, but only a relation. We shall often write $x_1\sim_R x_2$ in place of $(x_2,x_1)\in R$.

\begin{remark}
As a consequence of $\a(R)\subset \on{gr}(T\Phi)$, one obtains 
\[ \a_1^*(\mu_1)\sim_R  \a_2^*(\mu_2)\]
for all $\mu_i\in TM_i^*|_{m_i}$ with $m_2=\Phi(m_1),\ (T\Phi|_{m_1})^*\mu_2=\mu_1$. 
\end{remark}

\begin{remark}
   	Condition 
	\ref{it:ii} for a Dirac morphism says that for a given point $(m_2,m_1)\in \on{gr}(\Phi)$, if 
	$x_2\in (E_2)_{m_2}$ then there is a \emph{unique} element  $x_1\in (E_1)_m$ with $x_1\sim_R x_2$. 
	We stress that we are working with the \emph{strong} version of Dirac morphisms, as opposed to the weak version where uniqueness is not required.  
\end{remark}

\begin{remark} 
	A vector bundle morphism $\Phi^*E_2\to E_1$ as in (ii) is the same as a vector bundle \emph{comorphism} $E_1\da E_2$. 
	For a Dirac morphism, the compatibility of $R$ with brackets implies that it is a \emph{ Lie algebroid comorphism}: In particular, 
	the pullback map on sections $\Gamma(E_2)\to \Gamma(E_1)$ preserves Lie brackets. 
\end{remark}

Suppose $M_i,\ i=1,2$ are manifolds with closed 3-forms 
$\eta_i$, defining standard Courant algebroid $\T_{\eta_i} M_i$. 
Given a map $\Phi\colon M_1\to M_2$ and a 2-form $\omega\in \Omega^2(M_1)$ satisfying
$\d\omega=\eta_1-\Phi^*\eta_2$, we define the \emph{standard Courant morphism} 
\[ \T_\omega\Phi\colon \T_{\eta_1} M_1\da \T_{\eta_2} M_2\] 
by 
\[ v_1+\mu_1\sim_{\T_\omega\Phi } v_2+\mu_2 \ \ \Leftrightarrow \ \ v_2=(T\Phi|_{m_1})(v_1),\  \mu_1=(T\Phi|_{m_1})^*\mu_2+\iota_{v_1}\omega.\] 
For $\omega=0$ we use the notation $\T\Phi$. 
Given Poisson structures 
$\pi_i$ on $M_i$, the map $\Phi$ is a Poisson map if and only if it defines a Dirac morphism $\T\Phi\colon (\T M_1,\on{gr}({\pi_1}))\da  (\T M_2,\on{gr}(\pi_2))$.

If $\O\subset M$ is an orbit of a Dirac structure $(\AA,E)$ (that is, a leaf of the singular foliation defined by $\a(\Gamma(E))\subset \Gamma(TM)$), there is a canonical Dirac morphism 
\begin{equation}\label{eq:orbit}
 R\colon (\T\O,T\O)\da (\AA,E).
 \end{equation}
 As a subbundle of $\AA\times \ol{\T \O}$ along the graph of the inclusion $\iota_\O$, it is spanned by the restrictions of $(\sigma,\a(\sigma))$ for $\sigma\in \Gamma(E)$, together with the restrictions of $(\a^*\mu,\iota_\O^*\mu)$ for $\mu\in \Omega^1(M)$. 

 As another example, if $(\dd,\g)$ is a Manin pair, 
an  equivariant map $\Phi\colon M_1\to M_2$ of $\dd$-manifolds 
with coisotropic stabilizers gives a Dirac morphism 
\begin{equation}
(M_1\times \dd,\ M_1\times \g)\da (M_2\times \dd,\ M_2\times \g).
\end{equation}
In particular, given a $\dd$-action on $M$ with coisotropic stabilizers, we have the Dirac morphism $(M\times \dd,\,M\times \g)\da 
(\dd,\g)$ defined by the map $M\to \pt$. Other examples arise from the theory of Dirac Lie groups and Dirac homogeneous spaces (cf.~ \cite{lib:dir,me:dir}).

\subsection{Exact Courant algebroids}\label{subsec:exact}
A Courant algebroid $\AA\to M$ is called  \emph{exact} \cite{sev:let} if the sequence $0\to T^*M\to \AA\to TM\to 0$ is exact. 
Equivalently, $\on{rank}(\AA)=2\dim M$ and the 
anchor $\a\colon \AA\to TM$ is onto. 
In this case,  the choice of a Lagrangian splitting $j\colon TM\to \AA$ of the anchor gives an identification with the standard Courant algebroid  
$\AA\cong \T_\eta M$
for a closed 3-form $\eta\in \Omega^3(M)$. Here $\eta$ is given in terms of $j$  by 
\[ \iota_{X_1}\iota_{X_2}\iota_{X_3}\eta=\l j(X_1),\Cour{j(X_2),j(X_3)}\r.\]  The space of all splittings is an affine space
over the space of 2-forms $\varpi\in \Omega^2(M)$;  any two splittings are related by 
\begin{equation}\label{eq:changeofsplitting}
j'(X)=j(X)-\a^*(\iota(X)\varpi);\end{equation}
the resulting 3-form changes to 
\begin{equation}\label{eq:changeofeta} \eta'=\eta+\d\varpi.\end{equation}
A morphism $R\colon \AA_1\da \AA_2$ of exact Courant algebroids,  
with base map $\Phi\colon M_1\to M_2$,  is called \emph{exact} \cite{cab:dir,lib:sypo}
if the sequence  
\[ 0\to \on{gr}(T^*\Phi)\to R\to \on{gr}(T\Phi)\to 0\] 
is exact. Equivalently, the anchor identifies $\a(R)= T\Phi$ as a relation
$\a(R)\colon TM_1\da TM_2$.  The choice of  splittings $\AA_1\cong \T_{\eta_1} M_1,\ \ \AA_2\cong \T_{\eta_2} M_2$ 
describes the exact Courant morphisms as $R=\T_\omega\Phi$. 

\begin{example}
An exact Courant morphism $\T_\eta M\da 0$ is the same as a 
2-form $\omega$ satisfying $\d\omega=\eta$; one obtains 
$\on{gr}(\omega)$ as the backward image of $0$. An exact Dirac morphism 
$(\T M,TM)\da (0,0)$ amounts to having a symplectic 2-form on $M$. 	
\end{example}

\begin{remark}
	Given a Dirac structure  $E\subset \AA$  in an exact Courant algebroid, with a source-simply connected  
	integration $\G\rra M$ of the  Lie algebroid 
	$E\Rightarrow M$, there is a canonically defined exact Dirac morphism  
	\[R\colon  (\T \G,T\G)\da (\AA,E)\times (\AA,E)^{\on{op}}\]
	with base map $(\tz,\sz)$. 
	See \cite{bur:int}, \cite[Section 4.2]{bur:cou} and \cite[Remark 6.2.1]{lib:th}. 
\end{remark}

\section{Reduction of Dirac morphisms}	\label{app:B}
We summarize and extend some facts about reductions of Courant algebroids, Dirac structures, and their morphisms.  
See \cite{alv:int,bur:red,bur:di,cab:dir,balibanu2023reduction} for related constructions and 
further details.

\subsection{Reduction of Dirac structures}
Suppose $\AA\to M$ is a Courant algebroid, on which a Lie group $G$ acts by Courant morphisms. We assume that the action 
on $M$ is a principal action, and that the action on $\AA$ has \emph{isotropic generators}
\[ \varrho\colon M\times \g\to \AA.\]
That is, $\varrho$ is $G$-equivariant (where the action on $\g$ is the adjoint action), and for all $\xi\in \g$ we have  
$\l\varrho(\xi),\varrho(\xi)\r=0$ and 
\[ \Cour{\varrho(\xi),\sigma}=\L_\xi \sigma\]
for all $\sigma\in \Gamma(\AA)$. 
Then the quotient 
\[ \AA_{\on{red}}=\f{\on{ran}(\varrho)^\perp}{\on{ran}(\varrho)}\Big/G\to M/G\]
is a Courant algebroid \cite{bur:red}. The quotient procedure comes with a Courant morphism $S\colon \AA\da \AA_{\on{red}}$, where 
$x\sim_S y$ if and only if $x\in \on{ran}(\varrho^\perp)$, with $y$ its image under the quotient map. 
If $E\subset \AA$ is a $G$-invariant Dirac structure satisfying 
$\on{ran}(\varrho)\subset E$, then 
\[ E_{\on{red}}=\f{E}{\on{ran}(\varrho)}\Big/G\]
is a Dirac structure in $\AA_{\on{red}}$.  (Note that $E\subset \on{ran}(\varrho)^\perp$.) 
This sets up a 1-1 correspondence between $G$-equivariant Dirac structures in $\AA$  containing the generators, and Dirac structures in $\AA_{\on{red}}$. 

\begin{remark}
Note that $S\colon \AA\da \AA_{\on{red}}$ is only a weak Dirac morphism for the Dirac structures $E$ and $E_{\on{red}}$, since 
for a given $y\in E_{\on{red}}|_{\pi(m)}$, the element $x\in E|_m$ with $x\sim_S y$  is not unique: it is only unique up to elements of $\on{ran}(\varrho)$. On the other hand, $E$ is the backward image of $E_{\on{red}}$: if $x\in \AA$ with $x\sim_S y\in E_{\on{red}}$ 
then $x\in E$. 
\end{remark}

\begin{remark}
If a second Lie group $H$ acts on $\AA$ by automorphisms, commuting with the  action of $G$ and preserving the generators for the $G$-action, then this action descends to the reduction. 
\end{remark}

\subsection{Reduction of Dirac morphisms}
Let $\AA_i\to M_i$ be $G$-equivariant exact Courant algebroids, 
 with isotropic generators $\varrho_i\colon M_i\times \g\to \AA_i$. 
We assume that the base actions are principal, hence the reductions 
$\AA_{i,\on{red}}\to M_i/G$ are defined. 

 \begin{proposition}\cite[Theorem 3.8]{cab:dir}  \label{prop:part1}
 There is a  1-1 correspondence between 
 	\begin{itemize}
 		\item Courant morphisms 
 		$ R_{\on{red}}\colon\AA_{1,\on{red}}\da \AA_{2,\on{red}}$
 		\item $G$-equivariant Courant morphisms 
 		$R\colon \AA_1\da \AA_2$ intertwining the generators: 
 		\[ \varrho_1(\xi)\sim_R \varrho_2(\xi),\ \ \ \xi\in\g.\]
 	\end{itemize}
 	Furthermore, $R_{\on{red}}$ is exact if and only if $R$ is exact.  
 \end{proposition}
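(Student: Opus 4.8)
The plan is to treat the correspondence as a fibrewise Courant reduction of $R$, viewed as a Lagrangian subbundle, and to isolate the genuine difficulty as a uniqueness statement for \emph{integrable} lifts. Recall first that a $G$-equivariant Courant morphism $R\colon \AA_1\da\AA_2$ with base $\Phi\colon M_1\to M_2$ is the same datum as a diagonal-$G$-invariant Lagrangian subbundle $R\subset\AA_2\times\ol{\AA_1}$ along $\on{gr}(\Phi)$, whose sections are closed under the Courant bracket and whose anchor is tangent to $\on{gr}(\Phi)$. In these terms the hypothesis that $R$ intertwine the generators is exactly the condition that $R$ contain the diagonal generators $\varrho_\Delta(\xi)=(\varrho_2(\xi),\varrho_1(\xi))$, $\xi\in\g$.

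First I would construct the forward map $R\mapsto R_{\on{red}}$ as the clean composition of $R$ with the reduction morphisms $S_i\colon\AA_i\da\AA_{i,\on{red}}$ of Appendix \ref{app:B}. Writing $C=\on{ran}(\varrho_2)\oplus\on{ran}(\varrho_1)\subset\AA_2\times\ol{\AA_1}$ for the isotropic generators of the product reduction, with $C^\perp/C$ descending to $\AA_{2,\on{red}}\times\ol{\AA_{1,\on{red}}}$, I set
\[ R_{\on{red}}=\Big(\f{R\cap C^\perp}{R\cap C}\Big)\Big/G,\]
the linear Lagrangian reduction of $R$ by $C$, carried out $G$-equivariantly for the diagonal action and descended along $\on{gr}(\Phi)/G\cong\on{gr}(\Phi_{\on{red}})$, where $\Phi_{\on{red}}\colon M_1/G\to M_2/G$ is induced by $\Phi$. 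The intertwining hypothesis gives $R\cap C=\on{ran}(\varrho_\Delta)$, of rank $\dim\g$, and a short count using $R=R^\perp$ then yields $\on{rank}(R\cap C^\perp)=\dim M_1+\dim M_2-\dim\g$, so $R_{\on{red}}$ has the correct Lagrangian rank $\dim M_1+\dim M_2-2\dim\g$; constancy of these ranks (clean intersection) is guaranteed by $G$-invariance together with the containment of the generators. That $R_{\on{red}}$ is again a Courant morphism is then checked directly: it is Lagrangian automatically, its sections are bracket-closed because the reduction of Appendix \ref{app:B} is functorial for the Courant bracket (sections of $R$ lying in $C^\perp$ descend), and the anchor condition $\a(R_{\on{red}})\subset\on{gr}(T\Phi_{\on{red}})$ follows from $\a(R)\subset\on{gr}(T\Phi)$ together with equivariance of $\Phi$.

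The bijectivity is where the real work lies, and I expect it to be the main obstacle. Both injectivity and surjectivity reduce to a single statement: over a fixed $R_{\on{red}}$ (with base $\Phi_{\on{red}}$) there is exactly one $G$-equivariant \emph{integrable} Lagrangian lift $R$ satisfying the anchor condition, and its equivariant base $\Phi$ lifting $\Phi_{\on{red}}$ is uniquely determined. The subtlety is that at the purely linear, non-integrable level the lift is far from unique: the product reduction $\AA_{2,\on{red}}\times\ol{\AA_{1,\on{red}}}$ is a further reduction of the diagonal reduction $(\AA_2\times\ol{\AA_1})_{\on{red}}$ over the graph, and this further reduction collapses many Lagrangians together; moreover an equivariant $\Phi$ above $\Phi_{\on{red}}$ is a priori only determined up to gauge. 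The Dirac-structure reduction correspondence recalled in Appendix \ref{app:B} handles the diagonal reduction cleanly and supplies the bijection at the level of Dirac structures; the content beyond it is precisely the rigidity of the integrable lift. I would establish this rigidity by exploiting the Lie-algebroid structure: closure under the Courant bracket, combined with the intertwining of generators and the anchor/graph condition, freezes the vertical directions and pins down both $R$ and its base $\Phi$. Checking that the two constructions are mutually inverse is then a matter of tracing representatives through $R\cap C^\perp$ and back.

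Finally, for the exactness clause I would reduce the defining short exact sequence $0\to\on{gr}(T^*\Phi)\to R\to\on{gr}(T\Phi)\to 0$ term by term. Under $C$-reduction the tangent and cotangent pieces descend compatibly with the reductions of $TM_i$ to $T(M_i/G)$ and of $T^*M_i$ to $T^*(M_i/G)$, so $\on{gr}(T\Phi)$ and $\on{gr}(T^*\Phi)$ reduce to $\on{gr}(T\Phi_{\on{red}})$ and $\on{gr}(T^*\Phi_{\on{red}})$; exactness of the reduced sequence is therefore equivalent to exactness of the original one, giving that $R$ is exact if and only if $R_{\on{red}}$ is. Everything here is a rank count or an application of the functoriality of Courant reduction, so the uniqueness-of-integrable-lift step above is the only substantial point.
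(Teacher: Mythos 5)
The paper does not actually prove Proposition \ref{prop:part1}: it cites \cite[Theorem 3.8]{cab:dir} and merely records the explicit formulas for the two directions of the correspondence. Your forward construction $R\mapsto R_{\on{red}}=\bigl((R\cap C^\perp)/(R\cap C)\bigr)/G$ agrees with the paper's description ($y_1\sim_{R_{\on{red}}}y_2$ iff some $x_i\sim_{S_i}y_i$ satisfy $x_1\sim_R x_2$), and your rank count is right. One small caveat: the equality $R\cap C=\on{ran}(\varrho_\Delta)$ does not follow from the intertwining hypothesis alone; you also need the anchor condition $\a(R)\subset\on{gr}(T\Phi)$ together with freeness of the action on $M_2$ to rule out $(\varrho_2(\xi_2-\xi_1),0)\in R$ with $\xi_2\neq\xi_1$.

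The genuine gap is the inverse direction, which you yourself identify as the main obstacle and then do not carry out. Moreover, the mechanism you propose for it is misdirected. The reconstruction of $R$ from $R_{\on{red}}$ is not a rigidity statement about \emph{integrable} lifts forced by closure under the Courant bracket; it is an explicit pointwise linear-algebraic prescription, which is exactly what the paper records: $x_1\sim_R x_2$ iff $x_i=x_i'+\a_i^*(\mu_i)$ with $\mu_1=(T\Phi)^*(\mu_2)$, $x_i'\sim_{S_i}y_i$, $y_1\sim_{R_{\on{red}}}y_2$, and $(T\Phi)(\a_1(x_1))=\a_2(x_2)$. The point your sketch misses is that the naive preimage $(R\cap C^\perp)+C$ is already a Lagrangian subbundle of the correct rank, yet is in general a \emph{different} Lagrangian from $R$; the missing directions of $R$ outside $C^\perp$ are supplied by the canonical relations $\a_1^*((T\Phi)^*\mu)\sim_R\a_2^*(\mu)$ (valid for any Courant morphism), and the anchor condition then cuts the sum back down to a Lagrangian. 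Given the base map $\Phi$, uniqueness of the lift is therefore pure linear algebra; the bracket enters only in verifying that the reconstructed subbundle is again involutive. None of this is in your proposal, and it is the entire content of the backward direction.

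Separately, your claim that the equivariant base map $\Phi$ is itself pinned down is false: equivariant lifts of $\Phi_{\on{red}}$ through the principal bundles $M_i\to M_i/G$ differ by gauge transformations, and distinct unreduced morphisms with distinct base maps can induce the same $R_{\on{red}}$ (for instance, with $M_1=M_2=G$ acted on by right translation and $\varrho_i(\xi)=\xi^L$, every $\T L_h\colon \T G\da \T G$ is equivariant, intertwines the generators, and reduces to the trivial morphism $0\da 0$ over a point). So the correspondence must be read over fixed compatible base maps, and a proof strategy resting on uniqueness of $\Phi$ would be attempting to prove something false.
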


In more detail, the correspondence is as follows. Let $\Phi\colon M_1\to M_2$ be the base map of $R$.
Let $S_i\colon \AA_i\da \AA_{i,\on{red}}$ be the reduction morphisms. 
In one direction, given $R$, we have $y_1\sim_{R_{\on{red}}} y_2$ if and only if 
there are elements $x_i\in\AA_i$ such that 
\[ x_1\sim_R x_2,\ \ x_i\sim_{S_i} y_i.\]
In the opposite direction,  given $R_{red}$, we have $x_1\sim_R x_2$ if and only if there are elements $y_i\in \AA_{i,\on{red}}$ and $\mu_i\in \ T^*M_i$ such that the following conditions hold:
\begin{itemize}	
	\item 	$ y_1\sim_{R_{\on{red}}} y_2$,
		\item  $x_i-\a_i^*(\mu_i)\sim_{S_i}y_i$. 
	\item $(T\Phi)(\a_1(x_1))=\a_2(x_2)$, 
    \item   $\mu_1=(T\Phi)^*(\mu_2)$,
\end{itemize}

The $\Rightarrow$ direction of the following result is also proved in \cite{cab:dir}. The other direction is not explicitly stated there; we will therefore provide a proof.

\begin{proposition}\label{prop:part2}
Let $R\colon \AA_1\da \AA_2$ be as in Proposition \ref{prop:part1}. Let $E_i\subset \AA_i$ be $G$-invariant Dirac structures 
containing the generators $\on{ran}(\varrho_i)$, with reductions $E_{i,\on{red}}$. Then 
 $R$ defines a  Dirac morphism
\[ R\colon (\AA_1,E_1)\da (\AA_2,E_2)\] 
if and only if  $R_{\on{red}}$ defines a Dirac morphism 
\[ R_{\on{red}}\colon (\AA_{1,\on{red}},E_{1,\on{red}})\da (\AA_{2,\on{red}},E_{2,\on{red}}).\]
\end{proposition}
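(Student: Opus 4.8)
The plan is to prove the nontrivial implication (that $R_{\on{red}}$ being a Dirac morphism forces $R$ to be one; the converse is in \cite{cab:dir}) using the unique‑lift characterization of Dirac morphisms recalled after the definition: $R\colon(\AA_1,E_1)\da(\AA_2,E_2)$ is Dirac iff $R\cap(E_2\times E_1)$ is the graph of a bundle map $\Phi^*E_2\to E_1$, i.e. iff for every $m_1$ with $m_2=\Phi(m_1)$ and every $x_2\in(E_2)_{m_2}$ there is a \emph{unique} $x_1\in(E_1)_{m_1}$ with $x_1\sim_R x_2$. Fixing such $m_1,m_2$ and writing $\bar m_i$ for their images in $M_i/G$, I would check existence and uniqueness of this lift fiberwise, and then note that the resulting fiberwise bijection $(E_2)_{m_2}\to(E_1)_{m_1}$ is a smooth bundle isomorphism because $R\cap(E_2\times E_1)$ is a subbundle projecting isomorphically onto $\Phi^*E_2$, so its inverse supplies the required bundle map. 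Throughout I use the two facts about the reduction morphisms $S_i\colon\AA_i\da\AA_{i,\on{red}}$: that $x\sim_{S_i}y$ precisely when $x\in\on{ran}(\varrho_i)^\perp$ with $y$ its class, so the ambiguity in lifting $y$ is a coset of $\on{ran}(\varrho_i)$; and that $E_i$ is the backward image of $E_{i,\on{red}}$, so any $x$ with $x\sim_{S_i}y\in E_{i,\on{red}}$ automatically lies in $E_i$. Since $E_i$ is Lagrangian and contains $\on{ran}(\varrho_i)$, we have $E_i\subseteq\on{ran}(\varrho_i)^\perp$, and the quotient map $q_i\colon(E_i)_{m_i}\to(E_{i,\on{red}})_{\bar m_i}$ is onto with kernel $\on{ran}(\varrho_i)$.

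For existence, given $x_2\in(E_2)_{m_2}$ I would set $y_2=q_2(x_2)$ and use that $R_{\on{red}}$ is Dirac to obtain $y_1\in(E_{1,\on{red}})_{\bar m_1}$ with $y_1\sim_{R_{\on{red}}}y_2$. The correspondence of Proposition \ref{prop:part1} then yields $x_1',x_2'$ with $x_1'\sim_R x_2'$, $x_1'\sim_{S_1}y_1$ and $x_2'\sim_{S_2}y_2$; backward image gives $x_1'\in E_1$. As $x_2$ and $x_2'$ both lie in $\on{ran}(\varrho_2)^\perp$ with the same class $y_2$, they differ by some $\varrho_2(\xi)$, $\xi\in\g$; the intertwining $\varrho_1(\xi)\sim_R\varrho_2(\xi)$ together with linearity of $R$ gives $x_1'+\varrho_1(\xi)\sim_R x_2'+\varrho_2(\xi)=x_2$. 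Since $\on{ran}(\varrho_1)\subseteq E_1$, the element $x_1:=x_1'+\varrho_1(\xi)$ lies in $E_1$ and is the desired lift.

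For uniqueness, suppose $x_1,\tilde x_1\in(E_1)_{m_1}$ both relate to $x_2$ and set $\delta=x_1-\tilde x_1\in(E_1)_{m_1}$, so $\delta\sim_R 0$. Applying the correspondence to $\delta\sim_R 0$, $\delta\sim_{S_1}q_1(\delta)$ and $0\sim_{S_2}0$ shows $q_1(\delta)\sim_{R_{\on{red}}}0$; uniqueness for the Dirac morphism $R_{\on{red}}$ (comparing with $0\sim_{R_{\on{red}}}0$) forces $q_1(\delta)=0$, i.e. $\delta=\varrho_1(\xi)$ for some $\xi\in\g$. Subtracting $\delta\sim_R 0$ from $\varrho_1(\xi)\sim_R\varrho_2(\xi)$ gives $(\varrho_2(\xi),0)\in R$. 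This is the one point genuinely special to this direction: the anchor condition $\a(R)\subseteq\on{gr}(T\Phi)$ applied to $(\varrho_2(\xi),0)$ yields $\xi_{M_2}|_{m_2}=\a_2(\varrho_2(\xi))=T\Phi(0)=0$, and since the $G$-action on $M_2$ is principal, hence free, the infinitesimal action is injective and $\xi=0$; therefore $\delta=0$ and $x_1=\tilde x_1$.

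I expect this last step to be the crux. In the converse implication one discharges the analogous ambiguity directly from the uniqueness built into the hypothesis that $R$ is Dirac, but here $R$ is precisely what we are trying to control, so the $\on{ran}(\varrho_1)$–ambiguity left over after descending to $\AA_{1,\on{red}}$ cannot be removed by any statement about $R$ itself; it must be eliminated geometrically, using that the generators anchor to the generating vector fields of a \emph{free} base action. The remaining verifications—linearity of the relations, the identity $\a_i(\varrho_i(\xi))=\xi_{M_i}$ (immediate from exactness of $\AA_i$ together with the defining bracket relation $\Cour{\varrho_i(\xi),\sigma}=\L_\xi\sigma$ for isotropic generators), and smoothness of the resulting bundle map—are routine and I would treat them briefly.
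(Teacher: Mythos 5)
Your proposal is correct and follows essentially the same route as the paper's proof of the converse direction: descend $x_2$ to $y_2$, lift the unique $R_{\on{red}}$-related $y_1$ back via the correspondence of Proposition \ref{prop:part1}, correct by a generator $\varrho_i(\xi)$ to hit $x_2$ exactly, and for uniqueness reduce to $q_1(\delta)=0$ and then kill the residual $\varrho_1(\xi)$-ambiguity using the anchor condition together with freeness of the principal action. The only (immaterial) deviation is that you apply the anchor condition to $(\varrho_2(\xi),0)\in R$ to get $\xi_{M_2}|_{m_2}=0$ directly, whereas the paper applies it to $(0,\varrho_1(\xi))\in R$ and invokes equivariance of $\Phi$ to reach the same conclusion.
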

\begin{proof}
Let $\pi_i\colon M_i\to M_i/G$ be the quotient maps. 	Suppose $R$ is a Dirac morphism. To see that $R_{\on{red}}$ is a Dirac morphism, let 
\[ y_2\in E_{2,\on{red}}\] be given, with base point $\pi_2(m_2)$, where $m_2=\Phi(m_1)$. 
Choose $x_2\in E_2|_{m_2}$ with $x_2\sim_{S_2} y_2$. Since $R$ is a Dirac morphism, there is a unique $x_1\in E|_{m_1}$ with $x_1\sim_R x_2$. Its image $y_1\in E_1$ satisfies $y_1\sim_{R_{\on{red}}} y_2$. 
To verify the uniqueness condition of Dirac morphisms, let $y_1\in E_{1,\on{red}}$ with 
$y_1\sim_{R_{\on{red}}} 0$. By definition of $R_{\on{red}}$, there exist elements $x_i\in  \AA_i$ with 
$x_1\sim_{S_1}y_1,\ x_2\sim_{S_2}0$, and such that 
 $x_1\sim_R x_2$. It is automatic that $x_i\in E_i$.  Since $x_2\sim_{S_2}0$, there is $\xi\in\g$ with $x_2=\varrho(\xi)|_{m_2}$. Subtracting $\varrho_i(\xi)$ from both $x_1,x_2$, we may arrange $x_2=0$. 
 But since $R$ is a Dirac morphism,  $x_1\sim_R 0$ shows that $x_1=0$, hence $y_1=0$. 

Conversely, suppose $R_{\on{red}}$ is a Dirac morphism. To show that $R$ is a Dirac morphism, let 
\[ x_2\in E_2|_{m_2}\]
be given,  where $m_2=\Phi(m_1)$. 
Let $y_2\in E_{2,\on{red}}|_{\pi_2(m_2)}$ be the image of $x_2$, and let $y_1\in E_{1,\on{red}}|_{\pi_1(m_1)}$ be the unique element such that 
$y_1\sim_{R_{\on{red}}} y_2$. By definition of $R_{\on{red}}$, there are elements $x_1'\in \AA_1|_{m_1},\ x_2'\in \AA_2|_{m_2}$ with 
with
\[ x_i'\sim_{S_i}y_i,\ \ \ x_1'\sim_R x_2'.\]
The first condition implies $x_i'\in E_i$. Since both $x_2,x_2'$ are $S_2$-related to $y_2$, there is a 
(unique) element $\xi$ such that $x_2'-x_2=\varrho_2(\xi)|_{m_2}$. Then $x_1=x_1'-\varrho_1(\xi)|_{m_1}\in E_1|_{m_1}$ satisfies
$ x_1\sim_Rx_2$. To verify the uniqueness condition, let $x_1\in E_1|_{m_1}$ with $x_1\sim_R 0$. Then its image $y_1\in E_{1,\on{red}}$ satisfies $y_1\sim_{R_{\on{red}}} 0$. Since $R_{\on{red}}$ is a Dirac morphism, it follows that $y_1=0$. This means $x_1=\varrho_1(\xi)|_{m_1}$ for some $\xi\in \g$. But then 
 $\varrho_1(\xi)\sim_R 0$ implies that 
 \[ T\Phi|_{m_1} (\xi_{M_1}|_{m_1})=0.\]
 Since the action is principal and $\Phi$ is $G$-equivariant, it follows that $\xi=0$, and so $x_1=0$.  
\end{proof}

\begin{remark}
	Given actions of another Lie group $H$ on $\AA_i$, commuting with the $G$-actions and preserving the Dirac structures and the generators, and if $R$ is $H\times G$-equivariant, then the reduced morphism $R_{\on{red}}$ is $H$-equivariant.  
\end{remark}

\begin{remark}[Bivector fields]
	\label{rem:bivacA}
In the setting of Proposition \ref{prop:part2}, suppose $F_2\subset \AA_2$ is a Lagrangian complement to $E_2$, 
with the property that $F_2\cap \on{ran}(\varrho_2)^\perp$ is $G$-invariant. Then its backward image $F_1$ under $R$ is 
a Lagrangian complement to $E_1$, and  $F_1\cap \on{ran}(\varrho_1)^\perp$ is $G$-invariant. The subbundles $F_i$ descend to Lagrangian complements $F_{i,\on{red}}$ to $E_{i,\on{red}}$, and $F_{1,\on{red}}$ is the backward image of $F_{2,\on{red}}$ under $R_{\on{red}}$. This means that $\Phi,\ \Phi_{\on{red}}$  
and the quotient maps $M_i\to M_i/G$  intertwine the bivector fields associated to the Lagrangian splittings.
\end{remark}

\subsection{Dirac Cross Section Theorem}\label{subsec:cross}
If the anchor $\a$ of the Courant algebroid $\AA\to M$ is transverse to a closed submanifold  $\iota\colon N\to M$, then the pull-back Courant algebroid 
\[ \iota^!\AA=\f{\a^{-1}(TN)}{\a^{-1}(TN)^\perp}\to N\] is defined (see e.g. \cite{lib:cou}). 
Similarly, given a Dirac structure 
$(\AA,E)$ such that $\a|_E$ is transverse to $N$, the pullback Dirac structure 
\[ \iota^!E=E\cap\a^{-1}(TN)\subset \iota^!\AA\]
is defined \cite{lib:cou}. (Observe that $E|_N+\a^{-1}(TN)=\AA|_N$ implies 
$ E\cap\, \a^{-1}(TN)^\perp=\{0\}$.)
We shall write $\iota^!(\AA,E)=(\iota^!\AA,\iota^!E)$.  As a special case, 
\begin{equation}\label{eq:specialcase} \iota^!(\T M,TM)=(\T N,TN).\end{equation}
The following result may be seen as a Dirac-geometric analogue of the Symplectic Cross Section Theorem of Guillemin-Sternberg \cite{gu:sy}. Several variations and special cases have appeared in the literature (e.g., \cite{al:qu,balibanu2023reduction,cro:log}).  

\begin{theorem}[Dirac Cross Section Theorem] \label{th:cross}
Let $R\colon (\AA_1,E_1)\da (\AA_2,E_2)$ be a Dirac morphism, with base map $\Phi\colon M_1\to M_2$. Suppose $ N_2\subset M_2$ is a closed submanifold transverse to $\a_2|_{E_2}$: 
\begin{equation}\label{eq:transversality} TM_2|_{N_2}=\a_2(E_2)|_{N_2}+TN_2.\end{equation}
Then 
$\Phi$ is transverse to $N_2$, and the pre-image $N_1=\Phi^{-1}(N_2)\subset M_1$  is transverse to $\a_1|_{E_1}$. 
Furthermore, letting $\iota_i\colon N_i\to M_i$ be the embeddings,  
$R$ induces a Dirac morphism 
\begin{equation}
	 \iota^!R\colon \iota_1^!(\AA_1,E_1)\da \iota_2^!(\AA_2,E_2),\end{equation}
with base map $\Phi|_{N_1}\colon N_1\to N_2$. 
\end{theorem}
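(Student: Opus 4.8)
The plan is to dispatch the two transversality claims first, as quick consequences of the defining properties of a Dirac morphism, and then to build $\iota^!R$ and verify the strong Dirac condition, where the real work lies. Throughout I use $\a(R)\subseteq \on{gr}(T\Phi)$ and the existence/uniqueness of $E$-lifts.

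First I would prove that $\Phi$ is transverse to $N_2$. Fix $m_1$ with $m_2=\Phi(m_1)\in N_2$ and let $x_2\in (E_2)_{m_2}$. Since $R$ is a Dirac morphism there is $x_1\in (E_1)_{m_1}$ with $x_1\sim_R x_2$, and $\a(R)\subseteq \on{gr}(T\Phi)$ gives $\a_2(x_2)=T\Phi(\a_1(x_1))\in \on{ran}(T_{m_1}\Phi)$. Hence $\a_2(E_2)_{m_2}\subseteq \on{ran}(T_{m_1}\Phi)$, and with \eqref{eq:transversality} this yields $T_{m_2}M_2=\on{ran}(T_{m_1}\Phi)+T_{m_2}N_2$, i.e. transversality; in particular $N_1=\Phi^{-1}(N_2)$ is a closed submanifold with $T_{m_1}N_1=(T_{m_1}\Phi)^{-1}(T_{m_2}N_2)$ and $\ker(T_{m_1}\Phi)\subseteq T_{m_1}N_1$. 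The same mechanism gives transversality of $N_1$ to $\a_1|_{E_1}$: for $v\in T_{m_1}M_1$, decompose $T\Phi(v)=\a_2(x_2)+u$ with $x_2\in E_2,\ u\in T_{m_2}N_2$ via \eqref{eq:transversality}, lift $x_2$ to $x_1\in E_1$ as above, and note $v-\a_1(x_1)\in (T\Phi)^{-1}(T_{m_2}N_2)=T_{m_1}N_1$, so $T_{m_1}M_1=\a_1(E_1)_{m_1}+T_{m_1}N_1$.

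Next I would define $\iota^!R\subseteq \iota_2^!\AA_2\times \ol{\iota_1^!\AA_1}$, over $\on{gr}(\Phi|_{N_1})$, as the image of $R\cap\big(\a_2^{-1}(TN_2)\times \a_1^{-1}(TN_1)\big)$ under the quotient maps $\a_i^{-1}(TN_i)\to \iota_i^!\AA_i$. The transversality established above makes this intersection of constant rank with smooth Lagrangian image whose anchor lands in $\on{gr}(T(\Phi|_{N_1}))$; that $\iota^!R$ is then a Courant morphism follows from the functoriality of the pullback (cross-section) construction for Courant algebroids and relations as in \cite{lib:cou,cab:dir}. What remains, and is the crux, is the strong Dirac condition: for each $\ol{x}_2\in \iota_2^!E_2$ over $\Phi(n_1)$ there is a unique $\ol{x}_1\in \iota_1^!E_1$ with $\ol{x}_1\sim_{\iota^!R}\ol{x}_2$. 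Existence is immediate: represent $\ol{x}_2$ by $x_2\in E_2\cap \a_2^{-1}(TN_2)$, take the unique $R$-lift $x_1\in E_1$, and observe $T\Phi(\a_1 x_1)=\a_2 x_2\in TN_2$ forces $\a_1 x_1\in TN_1$, so $x_1\in E_1\cap \a_1^{-1}(TN_1)$ descends to $\ol{x}_1$.

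The hard part is uniqueness: $\ol{x}_1\in \iota_1^!E_1$ with $\ol{x}_1\sim_{\iota^!R}0$ must vanish. Unwinding the quotients and using the consequence $\a_1^*((T\Phi)^*\mu)\sim_R \a_2^*\mu$ of $\a(R)\subseteq \on{gr}(T\Phi)$ to absorb the conormal term on the $\AA_2$-side, this reduces to a single fibre: if $w_1+\a_1^*\nu_1'\sim_R 0$ with $w_1\in E_1\cap \a_1^{-1}(TN_1)$ and $\nu_1'\in \on{Ann}(TN_1)$, then $w_1=0$ (so $\ol{x}_1=[w_1]=0$). Here I would use two facts. First, the Dirac-uniqueness of $R$ says $E_1\cap\{x:x\sim_R 0\}=0$; since $\{x:x\sim_R 0\}=\pr_{\AA_1}(R)^\perp$ (writing $\pr_{\AA_1}(R)$ for the set of $x_1$ admitting some $x_2$ with $x_1\sim_R x_2$), this is equivalent to $E_1+\pr_{\AA_1}(R)=\AA_1$; as $w_1\in E_1$ is isotropic against $E_1$, it suffices to show $w_1\perp \pr_{\AA_1}(R)$. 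Pairing $(0,w_1+\a_1^*\nu_1')\in R$ against any $(b',b)\in R$ gives $\l w_1,b\r=-\nu_1'(\a_1 b)$, so the claim becomes $\nu_1'(\a_1 b)=0$ for all $b\in \pr_{\AA_1}(R)$. Second, for such $b$ choose $b'$ with $b\sim_R b'$; writing $\a_2 b'=\a_2 e_2+t_2$ with $e_2\in E_2,\ t_2\in TN_2$ via \eqref{eq:transversality}, lifting $e_2$ to $e_1\in E_1$, and using $\ker(T\Phi)\subseteq TN_1$, one obtains $\a_1 b=\a_1 e_1+n$ with $n\in TN_1$, whence $\nu_1'(\a_1 b)=\nu_1'(\a_1 e_1)$. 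Finally the Lagrangian pairing of $(0,w_1+\a_1^*\nu_1')$ against $(e_2,e_1)\in R$ gives $\nu_1'(\a_1 e_1)=-\l w_1,e_1\r=0$, the last equality because $w_1,e_1\in E_1$ are mutually isotropic. This forces $w_1=0$ and closes the uniqueness. The only points needing care are that the representatives and the decomposition $\a_1 b=\a_1 e_1+n$ can be chosen to vary smoothly, which the constant-rank transversality from the first step guarantees.
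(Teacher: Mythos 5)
Your transversality arguments and your existence half of the strong Dirac condition match the paper's. Your uniqueness argument is genuinely different and is correct: rather than decomposing elements of $R$ along a complement, you reduce to showing that $w_1\in E_1\cap\a_1^{-1}(TN_1)$ with $w_1+\a_1^*\nu_1'\sim_R 0$ must vanish, and you get this from the purely linear-algebraic identity $\{x:x\sim_R 0\}=\pr_{\AA_1}(R)^\perp$ (so $E_1+\pr_{\AA_1}(R)=\AA_1$) together with two pairings inside the Lagrangian $R$. I checked the computation $\l w_1,b\r=-\nu_1'(\a_1 b)$ and the reduction $\nu_1'(\a_1 b)=\nu_1'(\a_1 e_1)=-\l w_1,e_1\r=0$; this works and is arguably cleaner than the paper's route at this step.

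The gap is where you write that ``the transversality established above makes this intersection of constant rank with smooth Lagrangian image.'' That $R|_{\on{gr}(\Phi_N)}\cap\a^{-1}(TN)$ is a \emph{subbundle} is the technical crux of the whole construction, and it does not follow from the pointwise transversality statements in any obvious way: $\on{rank}\bigl(R\cap\a^{-1}(TN)\bigr)$ is governed by $\on{rank}\bigl(R\cap\a^{-1}(TN)^\perp\bigr)$, and the latter involves $\ker(R)\cap\a_1^*\bigl(\on{Ann}(TN_1)\bigr)$, whose dimension is not controlled by your hypotheses (the fiberwise image in $\iota_2^!\AA_2\times\ol{\iota_1^!\AA_1}$ does have constant, Lagrangian dimension, but that alone does not give local triviality). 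The paper closes exactly this hole by choosing a Lagrangian complement $F_2$ to $E_2$ with $F_2|_{N_2}\subset\a_2^{-1}(TN_2)$ (possible by the transversality), taking its backward image $F_1=F_2\circ R$, and using the resulting decomposition $R\cong F_1\oplus\Phi^*E_2$ to exhibit
\[
R|_{\on{gr}(\Phi_N)}\cap\a^{-1}(TN)\cong F_1|_{N_1}\oplus(\Phi|_{N_1})^*\bigl(E_2\cap\a_2^{-1}(TN_2)\bigr),
\]
which is visibly of constant rank. Some device of this kind is needed; deferring to ``functoriality of the pullback construction'' begs the question, since what is being proved here is precisely that the pullback of the \emph{relation} $R$ is well defined as a smooth Lagrangian subbundle. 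Incidentally, once you have this adapted complement, the paper also uses it to run the uniqueness argument, so your linear-algebra alternative saves work only at that second step, not at the first.
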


\begin{proof}	
Consider $m_2=\Phi(m_1)$. Since $R$ is a Dirac morphism, every $x_2\in E_2|_{m_2}$ determines a unique $x_1\in E_1|_{m_1}$ with $x_1\sim_R x_2$. Applying the anchor, it follows that 
$\a_2(x_2)=(T\Phi|_{m_1})(\a(x_1))$. This shows  $\a_2(E_2|_{m_2})\subset \on{ran}(T\Phi|_{m_1})$, hence transversality of 
$\a_2|_{E_2}$ to $N_2$ implies transversality of $\Phi$ to $N_2$.  It follows that 
$N_1=\Phi^{-1}(N_2)$ is a submanifold.

The normal bundle functor gives an isomorphism $\nu(M_1,N_1)|_{m_1}\to \nu(M_2,N_2)|_{m_2}$.  
By assumption, every element of $\nu(M_2,N_2)|_{m_2}$ is realized as $\a(x_2)$ for some $x_2\in E_2|_{m_2}$; the unique
$x_1\in E_1|_{m_1}$ with $x_1\sim_R x_2$ realizes the corresponding element of $\nu(M_1,N_1)_{m_1}$. This shows 
that $N_1$ is transverse to $\a_1|_{E_1}$. In particular, the Dirac structures $\iota_i^!(\AA_i,E_i),\ i=1,2$ are defined. 
		
Write $\AA=\AA_2\times \ol{\AA_1}$, with anchor $\a=(\a_2,\a_1)$, and let 
$N=N_2\times N_1,\ M=M_2\times M_1$, with inclusion $\iota=(\iota_2,\iota_1)\colon N\to M$. 
Let $\Phi_N=\Phi|_{N_1}\colon N_1\to N_2$ be the restriction of $\Phi$.

We claim that 
$R|_{\on{gr}(\Phi_N)}\cap \a^{-1}(TN)$ restricts to a subbundle of $\AA$ along $\on{gr}(\Phi_N)$. This then implies that $R|_{\on{gr}(\Phi_N)}\cap \a^{-1}(TN)^\perp$ is a subbundle
as well, and hence so is the quotient 
\[ {\iota^! R}=\f{R|_{\on{gr}(\Phi_N)}\cap \a^{-1}(TN)}{R|_{\on{gr}(\Phi_N)}\cap \a^{-1}(TN)^\perp}\subset \iota^!\AA.\]		
To prove the claim, it is convenient to work with a decomposition of $R$. Since 
	$TM_2|_{N_2}=\a_2(E_2|_{N_2})+TN_2$ we have $\AA_2|_{N_2}=E_2|_{N_2}+\a_2^{-1}(TN_2)$. 
Choose a 	
Lagrangian subbundle $F_2\subset \AA_2$ with $\AA_2=E_2\oplus F_2$ and such that 
\[ F_2|_{N_2}\subset \a_2^{-1}(TN_2).\] The backward image 
\[ F_1=F_2\circ R\subset \AA_1\] is a smooth Lagrangian subbundle with $\AA_1=E_1\oplus F_1$ , 
and it comes with a bundle map 
$F_1\to F_2$ taking $y_1\in F_1$ to the \emph{unique} $y_2\in F_2$ such that $y_1\sim_R y_2$ (see e.g.~ \cite{al:pur}).
This determines a direct sum decomposition  
\[ R\cong F_1\oplus \Phi^*E_2,\]
where the two summands are embedded as graphs of the bundle maps $F_1\to F_2$ 
and $\Phi^*E_2\to E_1$. 
Since $(T\Phi)\big(\a_1(y_1)\big)=\a_2(y_2)$ for $y_i\in F_i,\ y_1\sim_R y_2$, 
and since $F_2|_{N_2}\subset \a_2^{-1}(TN_2)$, 
we have that 
\[ F_1|_{N_1}\subset \a_1^{-1}(TN_1).\] 
That is, $R|_{\on{gr}(\Phi_N)}\cap \a^{-1}(TN)$ contains the graph of the bundle map 
$F_1|_{N_1}\to F_2|_{N_2}$, and we obtain 
\begin{equation}\label{eq:RNdirsum}
	 R|_{\on{gr}(\Phi_N)}\cap \a^{-1}(TN)\cong F_1|_{N_1} \oplus 
(\Phi|_{N_1})^* (E_2\cap \a_2^{-1}(TN_2)).
\end{equation}
%
 Since $E_2\cap \a_2^{-1}(TN_2)$ has constant rank, so does $R|_{\on{gr}(\Phi_N)}\cap \a^{-1}(TN)$. This proves that ${\iota^! R}$ is a well-defined 
 Lagrangian subbundle along $\on{gr}(\Phi_N)$, and that its image under the anchor is tangent to 
 $\on{gr}(\Phi_N)$. It is involutive by reduction: the bracket of two sections of $\AA$ that restrict to sections of $R$ again restricts to a section of $R$, and similarly for sections that restrict to sections of $\a^{-1}(TN)$. 

It remains to show that ${\iota^! R}$ defines a Dirac morphism. Consider 
$m_2=\Phi(m_1)$ for $m_1\in N_1$, and let  $ [x_2]\in \iota_2^!E_2|_{m_2}$ be an element represented by 
$x_2\in E_2\cap \a_2^{-1}(TN_2)$. Let $x_1\in E_1|_{m_1}$ be the unique element 
with $x_1\sim_R x_2$. Then $x_1\in E_1\cap \a_1^{-1}(TN_1)$, and 
$[x_1]\sim_{\iota^! R} [x_2]$. 

For the uniqueness property, suppose we are given $w_1\in \iota_1^!E_1$ with $w_1\sim_{\iota^!R} 0$. 
By definition of $\iota^!R$, this lifts to a relation $z_1\sim_R z_2$, where 
$(z_2,z_1)\in R\cap \a^{-1}(TN)$ 
with $[z_1]=w_1$ and $[z_2]=0$. Using the direct sum decomposition \eqref{eq:RNdirsum} we can write $z_i=x_i+y_i$ with $x_i\in E_i,\ y_i\in F_i$ and $x_1\sim_R x_2,\ y_1\sim_R y_2$. From 
$[z_2]=0$, it follows that $z_2\in \a_2^{-1}(TN_2)^\perp\subset F_2|_{N_2}$, hence $x_2=0$.
But then $x_1\sim_R 0$ implies that $x_1=0$ as well, since $R$ is a Dirac morphism. Similarly, $[z_1]=w_1\in \iota_1^!E_1$ shows that 
$y_1=z_1\in E_1|_{m_1}+\a_1^{-1}(TN_1)^\perp$, hence $y_1\in \a_1^{-1}(TN_1)^\perp$ and finally $w_1=[y_1]=0$.    
\end{proof} 

\begin{remark}[Exact case] \label{rem:standard}
	Consider the special case that $R$ is an exact Courant morphism between standard Courant algebroids, 
\[ R=\T_\omega\Phi\colon \T_{\eta_1}M_1\da \T_{\eta_2}M_2.\]	 
Then  pullback Courant algebroids are again standard, and 
\[ \iota^! R=\T_{\iota_1^*\omega}\Phi_N\colon \T_{\iota_1^*\eta_1}N_1\da \T_{\iota_2^*\eta_2}N_2.\]
\end{remark}

\section{Proof of Theorem
\ref{th:groupoid}}\label{app:C}
\begin{proof}
	Choose a basis $e_a$ of $\g$, and denote by $\theta^{a,R}$ the components of the Maurer-Cartan form. We shall use the 
	simplified notation $\iota_a$ in place of $\iota_{(e_a)_{\Q}}$. With this notation, 
	\[ \chi(e_a,\zeta)=\iota_a\l\alpha,\zeta\r.\]
	
	\begin{enumerate}
		\item Moment map property: The $G\times G$-action on $\G=G\times M$ is given by $(h_1.h_2).(g,q)=(h_1 g h_2^{-1},h.q)$. 
		The generating vector fields for the first action are $(-\xi^R,0)$, for $\xi\in\g$. We have, 
		\begin{align*} \iota_{(-\xi^R,0)}\omega&=-\A_g^*\left(
		\l\alpha,\xi\r+\chi(\xi,\theta^R)
		\right)\\
		&=-\A_g^*\Big(
		\l\alpha,\xi\r-\sum_a\theta^{a,R} \iota_a\l\alpha,\xi\r)
		\Big)
		\\
		&=-\tz^*\l\alpha,\xi\r.\end{align*}
		Similarly, 
		the generating vector fields for the second action are $(\xi^L,\xi_{\Q})$. Since $\xi_{\Q}$ is $\Ad_g$-related to 
		$(\Ad_g\xi)_{\Q}$, we obtain 
		\begin{align*}
		\iota_{(\xi^L,\xi_{\Q})}\omega&=
		\A_g^*\left(\l\alpha,\Ad_g\xi\r-\iota_{(\Ad_g\xi)_{\Q}}\l\alpha,\theta^R\r+
		\chi(\Ad_g\xi,\theta^R)
		\right)+\iota_{\xi_\Q}\beta(g)
		\end{align*}	
		The second and third term cancel. Using the definition of $\beta(g)$ (Equation \eqref{eq:betaq}), 
		the first and last term combine to $\l\alpha,\xi\r=\sz^*\l\alpha,\xi\r$. 
		\item Minimal degeneracy:  Suppose  $(-\xi^R|_g,v)\in T_gG\times T_q\Q$ lies in $\ker(\omega) \cap 
		\ker(T\tz)\cap 
		\ker(T\sz)$. Since it is in  $\ker(T\sz)$ we have $v=0$, and since it is in $\ker(T\tz)$ we have 
		$\xi_{\Q}|_q=0$, i.e. 
		$\xi\in \g_q$.  The property 
		$\iota((-\xi^R,0))\omega=-\tz^*\l\alpha,\xi\r$ shows that such an element is in $\ker(\omega_{g,q})$ if and only if $\xi\in 
		\on{ran}(\alpha_q)^\perp=\on{ran}(\alpha_q)$. Since $\on{ran}(\alpha_q)\subset \dd$ is a complement to $\dd_q$, it follows that $\xi=0$. This shows $\ker(\omega) \cap 
		\ker(T\tz)\cap 
		\ker(T\sz)=0$. 
		\item Multiplicative property: We have to show 
		\[ \delta\omega\equiv \partial_0^*\omega-\partial_1^*\omega+\partial_2^*\omega=0,\] 
		where $\partial_i\colon G\times G\times \Q$ are given by 
		\[ \partial_0(h,g,q)=(g,q),\ \ \partial_1(h,g,q)=(hg,q),\ \ \partial_2(h,g,q)=(h,g.q).\]
		Using the equivariance properties of these maps, we have 	
		
		\begin{align*}
		\iota_{( -\xi^R,0,0)}\partial\omega&=-\partial_1^*\iota_{( -\xi^R,0)}\omega+\partial_2^*\iota_{( -\xi^R,0)}\omega\\
		&=-\partial_1^*\tz^*\l\alpha,\xi\r+\partial_2^*\tz^*\l\alpha,\xi\r\\
		&=0,
		\end{align*}
		since $\tz\circ \partial_1=\tz\circ \partial_2$. Similarly, 
		\begin{align*}
		\iota_{(0,\xi^L,\xi_{\Q})}\partial\omega&=\partial_0^* \iota_{(\xi^L,\xi_{\Q})}\omega-\partial_1^*
		\iota_{(\xi^L,\xi_{\Q})}\omega\\
		&=\partial_0^*\sz^*\l\alpha,\xi\r-\partial_1^*\sz^*\l\alpha,\xi\r\\
		&=0
		\end{align*}
		since $\sz\circ \partial_0=\sz\circ \partial_1$. These calculation show $\delta\omega\in \Omega^2(G\times G\times \Q)$ 
		is of tridegree $(0,0,2)$. 
		The component of tridegree $(0,0,2)$ vanishes by the cocycle property for $\beta(g)$:
		\[ (\partial\omega)^{(0,0,2)}=\beta(g)-\beta(hg)+\A_g^*\beta(h)=0.\]
		\item Exterior differential: We finally show $\d\omega=\sz^*\eta-\tz^*\eta$. The component of bidegree $(0,3)$ 
		of $\d\omega\in \Omega^3(G\times M)$ is $\d\beta(g)=-\A_g^*\eta+\eta$, which agrees with the component of bidegree $(0,3)$ of 
		$\sz^*\eta-\tz^*\eta$. It hence suffices to compare the contractions with $-\xi^R$. Since $-\xi^R$ is $\tz$-related to $\xi_{\Q}$ and $\sz$-related to $0$, this amounts to showing 
		\[ \iota(-\xi^R)\d\omega=-\tz^* \iota_{\xi_{\Q}}\eta,	\]
		or equivalently	
		\[
		\L(-\xi^R)\omega=-\tz^*\left(\d\l\alpha,\xi\r+\iota_{\xi_{\Q}}\eta\right).\]
		Restricting the identity $\delta\omega=0$ to the slice $\{h\}\times (G\times \Q)$, we obtain the transformation property of $\omega$ under the left multiplication $\ell_h(g,q)=(hg,q)$, namely 
		\[ \omega-\ell_h^*\omega+\tz^*\beta(h)=0.\]
		Replacing $h$ with $\exp(t\xi)$, and taking a derivative this shows
		\[ \L(-\xi^R)\omega=-\tz^*\f{d}{d t}\big|_{t=0}\beta(\exp(t\xi)).\]
		The right hand side is computed in Lemma \ref{lem:hard}, and gives the desired result. 
	\end{enumerate}
\end{proof}

\section{Dirac-geometric approach to moduli spaces}\label{app:D}
Consider the setting of Section \ref{sec:moduli1}: In particular, $\Sigma$ is a compact, connected, oriented surface with boundary, with a collection $\V$ of base points (\emph{vertices}) on the boundary and resulting decomposition of the boundary into segments called \emph{edges}. 
We will give an alternative formulation of Theorem \ref{th:basic} using Dirac geometry. 
It will be convenient to introduce the notation $\AA=D\times (\ol{\dd}\oplus\dd)$, and \[ \AA^\E=\on{Map}(\E,\AA)=\prod_{\ez\in\E}\AA^\ez\] where 
$\AA^\ez$ is the copy corresponding to the edge $\ez$. Consider the Manin pair $\big((\ol{\dd}\oplus\dd)^\E,\dd^\V\big)$, 
where $ \dd^\V$ is regarded  as the Lie subalgebra consisting of maps $(\zeta,\zeta')\colon \E\to \ol{\dd}\oplus\dd$ such that $\zeta_{\ez_1}=\zeta'_{\ez_2}
\mbox{ whenever } \sz(\ez_2)=\tz(\ez_1)$. The inclusion map takes 
$\xi\colon \V\to \dd$  to the function $\ez\mapsto (\xi_{\tz(\ez)},\xi_{\sz(\ez)})$. This Manin pair defines a $D^\V$-equivariant 
Dirac structure 
\begin{equation}\label{eq:basic3}
(\AA^\E ,D^\E\times \dd^\V).\end{equation}
Theorem \ref{th:basic} is equivalent to the following:  
\begin{theorem}\label{th:basic2}
	The space 	$M_D(\Sigma)=\Hom(\Pi(\Sigma),D)$ comes with a $D^\V$-equivariant exact Dirac morphism  
	\begin{equation}\label{eq:basic2}
	R\colon 	(\T M_D(\Sigma),TM_D(\Sigma))\da (\AA^\E ,D^\E\times \dd^\V)\end{equation}
with base map $\Phi$. In terms of the identification $\AA\cong \T_\eta D$ we have that $R=\T_\omega \Phi$, 
where $\omega\in \Omega^2(M_D(\Sigma))$ is the 2-form on the moduli space. 
\end{theorem}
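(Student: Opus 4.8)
The plan is to deduce Theorem \ref{th:basic2} (and with it Theorem \ref{th:basic}) in three stages: first reconcile the two formulations, then settle the disk, and finally propagate to an arbitrary surface by cutting. The first move is to record the equivalence of the two statements. Equipping each factor $\AA^\ez=D\times(\ol\dd\oplus\dd)$ with its canonical $D\times D$-equivariant splitting $\alpha=\hh(-\theta^R,\theta^L)$ from Example \ref{ex:amm2}, the product $\AA^\E$ acquires a splitting whose $3$-form is $\sum_\ez\Phi_\ez^*\eta$, while the Manin pair $\dd^\V\subset(\ol\dd\oplus\dd)^\E$ gives the Dirac structure $D^\E\times\dd^\V$. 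Proposition \ref{prop:qvaluedmomentmaps}, applied to $\Q=D^\E$ with this splitting, then asserts that an exact Dirac morphism $R=\T_\omega\Phi$ as in \eqref{eq:basic2} is exactly a $2$-form $\omega$ satisfying conditions (i)--(iii) of Theorem \ref{th:basic}; the moment map condition (ii) unwinds to the stated contraction formula once one uses the inclusion $\xi\mapsto(\xi_{\tz(\ez)},\xi_{\sz(\ez)})$ of $\dd^\V$. Hence it suffices to construct $R$, with the non-degeneracy \eqref{it:c} becoming automatic as soon as $R$ is produced as an \emph{exact} morphism.

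Next I would treat the disk. For a polygon with edges $\ez_1,\ldots,\ez_r$ I identify $M_D(\Sigma)$ with the constraint set $\{\prod_i d_i=e\}\subset D^\E$ and check it is a union of leaves of the quasi-symplectic foliation of $(\AA^\E,D^\E\times\dd^\V)$: the generating vector fields of the $\dd^\V$-action are tangent to it, and a telescoping computation with the gauge action $d\mapsto a_{\tz}\,d\,a_{\sz}^{-1}$ shows that the $\dd^\V$-orbit through any point is open in the constraint set (at the trivial configuration the stabilizer is the diagonal $D$, so the orbit attains the full dimension $(r-1)\dim D$). The canonical orbit morphism \eqref{eq:orbit} then provides an exact Dirac morphism $R$ whose base map is the inclusion $\Phi$. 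Its associated $2$-form, read off through the canonical splitting, is identified with \v{S}evera's form \eqref{eq:severa}; this comes from the description of the multiplication morphism $T$ of $\AA_D$ in terms of the splitting (as in \eqref{eq:omegafus}), or equivalently from building the polygon by iterated fusion of $2$-gons and applying the fusion formula for $2$-forms.

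For a general surface I would cut $\Sigma$ along interior paths to a polygon $\Sigma^c$, producing the embedding $j\colon M_D(\Sigma)\to M_D(\Sigma^c)$ of \eqref{eq:embedding}, and then obtain $R$ from the disk morphism $R^c$ by the Dirac Cross Section Theorem \ref{th:cross}. Each glued pair of edges $\ez_i,\ez_j$ contributes a relation $d_id_j=e$; the submanifold $N\subset D^{\E^c}$ cut out by all these relations is transverse to the anchor of $E=D^{\E^c}\times\dd^{\V^c}$ -- precisely because the gauge action sweeps out the normal directions to the gluing constraints -- so the hypothesis \eqref{eq:transversality} holds, $\Phi^c$ is transverse to $N$, and $\Phi^{-1}(N)$ realizes $M_D(\Sigma)$. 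The cross-section target $\iota^!(\AA^{\E^c},D^{\E^c}\times\dd^{\V^c})$ is then matched, using the groupoid-inversion isomorphism $\AA_D\cong\AA_D^\op$ on the glued factors together with the identification of endpoint vertices, with the target $(\AA^\E,D^\E\times\dd^\V)$ of the glued surface; this is the Dirac-geometric form of internal fusion at the identified vertices. Theorem \ref{th:cross} yields the exact morphism $\iota^!R^c=R$, with $2$-form $j^*\omega^c$ by Remark \ref{rem:standard}, and minimal degeneracy \eqref{it:c} is inherited automatically. Independence of the chosen cutting follows, as for the $2$-form, from invariance under the elementary moves relating different polygonal presentations.

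I expect the main obstacle to lie in the last stage: verifying the transversality hypothesis \eqref{eq:transversality} and, above all, matching the cross-section Dirac structure on $N$ with the target $(\AA^\E,D^\E\times\dd^\V)$ of the glued surface -- that is, checking that gluing edges (via $\AA_D\cong\AA_D^\op$ and the multiplication morphism) while identifying their endpoint vertices reproduces exactly the Lagrangian matching subalgebra $\dd^\V$. By comparison the disk case is essentially a transitivity computation, and the equivalence of the two formulations is a direct application of Proposition \ref{prop:qvaluedmomentmaps}.
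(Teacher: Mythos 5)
Your proposal follows essentially the same route as the paper's proof: treat the cut polygon $\Sigma^c$ first, where $M_D(\Sigma^c)$ is the $D^{\V^c}$-orbit of $(e,\dots,e)$ and the canonical orbit morphism \eqref{eq:orbit} supplies $R^c$; then recover the general surface as $(\Phi^c)^{-1}(Q)$ for the gluing constraints and apply the Dirac Cross Section Theorem \ref{th:cross}, matching the cross-section target to $(\AA^\E,D^\E\times\dd^\V)$ via the anti-diagonal (groupoid-inversion) Dirac structures on glued pairs, with the $2$-form statement read off from the standard splittings as in Remark \ref{rem:standard}. The steps you flag as the main obstacles — transversality of the gluing constraints to the gauge orbits and the identification of the reduced Dirac structure with $\dd^\V$ — are exactly the points the paper verifies, so the plan is sound.
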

\begin{proof}[Proof] 
	We may assume $\Sigma$ is connected. As in Subsection \ref{sec:moduli1}, we cut the surface along embedded curves until we arrive at a polyhedron (disk) $\Sigma^c$; conversely, $\Sigma$ is obtained from $\Sigma^c$ 
	 by making some boundary identifications. 
	Since $\Phi^c\colon M_D(\Sigma^c)\to D^{\E^c}$ is an embedding as the
	$D^{\V^c}$-orbit of $(e,\ldots,e)\in D^{\E^c}$,  we obtain a Dirac morphism 
		\[ R^c\colon 	(\T M_D(\Sigma^c),TM_D(\Sigma^c))\da (\AA^{\E^c} ,D^{\E^c}\times \dd^{\V^c})\]
	with base map $\Phi^c$.  (See Example \ref{ex:orbits}.) 
	Let $Q\subset D^{\E^c}$ be the submanifold defined by the equations $d_{\ez}=\d_{\ez'}^{-1}$ for any paired edges $\ez,\ez'\in \E^c-\E$. 
	Then 
		\[ M_D(\Sigma)=(\Phi^c)^{-1}(Q)\subset M_D(\Sigma^c)\]
	see \eqref{eq:embedding}.
	 Let $\iota_Q\colon Q\to D^{\E^c}$ and $\iota_M\colon M_D(\Sigma)\to M_D(\Sigma^c)$ be the embeddings. 
   By the Dirac Cross Section Theorem \ref{th:cross}, and using \eqref{eq:specialcase},   we obtain a Dirac morphism 
   \begin{equation}
    \iota^!R^c\colon (\T M_D(\Sigma),\,T M_D(\Sigma))\da \iota_Q^!(\AA^{\E^c},\ D^{\E^c}\times \dd^{\V^c})
   \end{equation}
   with base map $\Phi^c\circ \iota_M\colon M_D(\Sigma)\to Q$. The submanifold $Q$ is a direct product 
  \[ Q=D^\E\times \prod_{(\ez',\ez)}  Q_{\ez',\ez},\]
  where $\prod_{(\ez',\ez)}$ is a product over pairs of \emph{glued} edges, and $\iota_{\ez',\ez}\colon Q_{\ez',\ez}\hra D\times D$
  is the anti-diagonal (consisting of pairs $(d',d)$ with 
  $d'd=e$). Similarly,  
  \[ \iota_Q^!\AA^{\E^c}=\AA^\E\times \prod_{(\ez',\ez)}  \iota_{\ez',\ez}^!(\AA\times \AA).\] 
  For every pair $(\ez',\ez)$ of glued edges, consider the Dirac structure $\ti{S}_{\ez',\ez}\subset \AA\times \AA$, consisting of elements of the form $((d',\zeta_1,\zeta_2),(d,\zeta_2,\zeta_1))$ with $\zeta_i\in \dd$. Its image under the anchor is tangent to 
  $Q_{\ez',\ez}$, hence $\ti{S}_{\ez',\ez}|_{Q_{\ez',\ez}}\subset \a^{-1}(TQ_{\ez',\ez})$. 
 By reduction we obtain a Dirac structure 
  \[ S_{\ez',\ez}\subset  \iota_{\ez',\ez}^!(\AA\times \AA),\] 
  defining a Courant morphism $\iota_{\ez',\ez}^!(\AA\times \AA)\da 0$ to the zero Courant algebroid over a point.   Taking a product over all pairs of glued edges, together with the 
  identity morphisms for non-glued edges, we obtain a Courant morphism 
  $S\colon  \iota^!\AA^{\E^c}\da \AA^\E$
  with base map $\pi$. We claim that it defines a Dirac morphism
  \begin{equation}\label{eq:SDirac}
  S\colon \iota^!(\AA^{\E^c},\ D^{\E^c}\times \dd^{\V^c})\da (\AA^\E,\ D^\E\times \dd^\V).
  \end{equation}
  To see this, let $d^c\in  Q\subset D^{\E^c}$ with image $d=\pi(d^c)\in D^\E$. 
 Given $\zeta\in \dd^\V$, let  $\zeta^c\in\dd^{\V^c}$ be the quotient map $\V^c\to \V$ followed by 
  $\zeta\colon \V\to \dd$. Then $(d^c,\zeta^c)\sim_S (d,\zeta)$. Conversely, suppose $\zeta^c\in \dd^{\V^c}$ satisfies $(d^c,\zeta^c)\sim_S (d,\zeta)$ for \emph{some} $\zeta\in \dd^\V$.  Then the definition of $S$ shows that $\zeta^c$ is the quotient map 
  $\V^c\to \V$ followed by $\zeta$.   In particular, if $(d^c,\zeta^c)\sim_S (d,0)$ then $\zeta^c=0$.  Hence, \eqref{eq:SDirac} is a Dirac morphism as claimed.  By composition, we obtain the desired Dirac morphism 
 $R=S\circ \iota^! R^c$ with base map $\pi\circ \Phi^c\circ \iota_M=\Phi$. 
 
 For the last part of the theorem,  use the standard splittings to identify $\AA\cong \T_\eta D$. Note that the pullback of 
 $\pr_1^*\eta+\pr_2^*\eta$ to the anti-diagonal $Q_{\ez',\ez}\subset D\times D$ vanishes. Hence, 
 $\iota_{\ez',\ez}^!(\AA\times \AA)=\T Q_{\ez',\ez}$ is just a standard (untwisted) Courant algebroid. Furthermore, 
 in terms of this identification, $S_{\ez',\ez}=T Q_{\ez',\ez}$, and so the Courant morphism $\iota_{\ez',\ez}^!(\AA\times \AA)\da 0$ is just the standard Courant morphism $\T Q_{\ez',\ez}\da 0$ defined by the map $Q_{\ez',\ez}\to \pt$. It follows that 
 $S=\T\pi$. On the other hand, $R^c=\T_{\omega^c}\Phi^c$ where $\omega^c\in \Omega^2(M(\Sigma^c))$ is the 2-form for the 
 cut surface, and so $\iota^! R^c=\T_{\omega}(\Phi^c\circ \iota)$ with $\omega=\iota^*\omega^c$. Finally, 
 \[ R=\T\pi\circ \T_{\omega}(\Phi^c\circ \iota)=\T_\omega\Phi.\qedhere\]
\end{proof}

We may now consider the case of colored surfaces, as in Section \ref{sec:moduli2}. 
 
\begin{theorem}
	The space $(M_D^{\on{col}}(\Sigma),\omega^{\on{col}})$ is a quasi-symplectic $(G\times G)^{\E_{\on{free}}}$-space with $D^{\E_{\on{free}}}$-valued moment map \eqref{eq:coloredmoment}. Equivalently, there is an exact $(G\times G)^{\E_{\on{free}}}$-equivariant morphism of Manin pairs 
	\[  R^{\on{col}}\colon \left(\T M^{\on{col}}_D(\Sigma),TM^{\on{col}}_D(\Sigma)\right)\da \left(\AA^{\E_{\on{free}}} ,D^{\E_{\on{free}}}\times (\g\oplus \g)^{\E_{\on{free}}}\right).\]
In terms of the standard splitting of $\AA$, we have $ 	R^{\on{col}}=\T_{\omega^{\on{col}}}\Phi^{\on{col}}$. 
\end{theorem}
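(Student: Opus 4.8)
The plan is to prove this final theorem exactly as the uncolored case (Theorem \ref{th:basic2}) was handled, namely by exhibiting $R^{\on{col}}$ as a composition of Dirac morphisms and then tracking the 2-forms through that composition using the standard splitting of $\AA$. The starting point is the Dirac morphism $R\colon (\T M_D(\Sigma),TM_D(\Sigma))\da (\AA^\E,D^\E\times \dd^\V)$ of Theorem \ref{th:basic2} for the \emph{uncolored} surface, already known to be exact with $R=\T_\omega\Phi$. First I would compose $R$ on the right with $\T i$, where $i\colon M^{\on{col}}_D(\Sigma)\to M_D(\Sigma)$ is the inclusion \eqref{eq:i}; since $M^{\on{col}}_D(\Sigma)=\Phi^{-1}(N)$ is cut out transversally (as noted in Section \ref{sec:moduli2}, $N$ is transverse to the $D^\V$-orbits and hence to $\Phi$), this is a legitimate restriction of the Dirac morphism. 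Then I would compose with the morphism $T$ of \eqref{eq:T1}, built as a product of edge-wise morphisms: the identity on each free edge, and the morphism $\AA^\ez\da 0$ defined by the Dirac structure $L_D=D\times(\g\oplus\g)$ on each colored edge. The composite
\[ R^{\on{col}}=T\circ R\circ \T i \]
is the desired morphism, and its base map is the restriction of $\Phi$ to the free edges, i.e.\ $\Phi^{\on{col}}$.

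The key verification is that $T$ is genuinely a Dirac (not merely weak Dirac) morphism. This reduces to the combinatorial fact recorded in \eqref{eq:st}: every vertex is the source or target of a \emph{unique} free edge, so that the natural map $(\g\oplus\g)^{\E_{\on{free}}}\cong \g^\V$ is an isomorphism. Using this identification, I would check that every element of $\{\pi(d)\}\times(\g\oplus\g)^{\E_{\on{free}}}$ is $T$-related to a unique element of $\{d\}\times\dd^\V$, which is precisely the uniqueness condition defining a strong Dirac morphism. This is the step where the colored structure does real work: on a colored edge the morphism $\AA^\ez\da 0$ defined by $L_D$ contributes nothing to the target but imposes the constraint that the relevant $\dd^\V$-component lands in $\g$, and the source/target bookkeeping from \eqref{eq:st} is exactly what guarantees these constraints fit together into a single well-defined $\g^\V\hookrightarrow\dd^\V$ with no redundancy and no deficiency.

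For the final differential-form assertion $R^{\on{col}}=\T_{\omega^{\on{col}}}\Phi^{\on{col}}$, I would use the standard splitting $\AA\cong \T_\eta D$ and track 2-forms through each factor of the composition. Since $\T i$ restricts along the embedding $i$, it contributes the pullback $\omega^{\on{col}}=i^*\omega$ (this is how $\omega^{\on{col}}$ was defined). On a free edge $T^\ez$ is the identity and contributes nothing; on a colored edge the morphism $\AA^\ez\da 0$ defined by $L_D=D\times(\g\oplus\g)$ is, after the standard splitting, the standard Courant morphism $\T_\eta D\da 0$ induced by the $2$-form that is the backward image of $0$ — and here one uses that $\eta|_G=0$ (since $\g$ is Lagrangian, as recalled at the start of Section \ref{sec:moduli2}), so that $L_D$ pulls back to the graph of the zero $2$-form on $G$ and contributes no additional term. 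Hence the composition yields exactly $\T_{\omega^{\on{col}}}\Phi^{\on{col}}$, with $\omega^{\on{col}}=i^*\omega$.

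The main obstacle I anticipate is the uniqueness (strong Dirac) check for $T$ on the colored edges, and more precisely ensuring that the bijection $(\g\oplus\g)^{\E_{\on{free}}}\cong\g^\V$ is compatible with the source/target incidence so that the constraints imposed on colored edges assemble consistently with the free-edge data. The form computation is essentially bookkeeping once the vanishing $\eta|_G=0$ and the definition $\omega^{\on{col}}=i^*\omega$ are in hand; the genuinely delicate point is the Dirac-morphism property of $T$, which is where the combinatorial hypothesis \eqref{eq:st} on the bi-colored boundary is indispensable.
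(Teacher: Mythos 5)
Your proposal is correct and follows essentially the same route as the paper: the same composition $R^{\on{col}}=T\circ R\circ \T i$, the same key uniqueness check for $T$ via the identification $(\g\oplus\g)^{\E_{\on{free}}}\cong\g^\V$ coming from \eqref{eq:st}, and the same use of $\eta|_G=0$ to see that the colored edges contribute nothing to the 2-form. The only (presentational) difference is that the paper's Appendix~\ref{app:D} version first restricts the target to the submanifold $Q\subset D^\E$ via the Dirac Cross Section Theorem \ref{th:cross}, so that the colored-edge morphisms live on the pullback algebroids $\iota_\ez^!\AA\cong\T G$ and the smoothness of the composed relation together with the identification $R^{\on{col}}=\T_{\omega^{\on{col}}}\Phi^{\on{col}}$ (via Remark \ref{rem:standard}) become automatic --- the transversality you invoke is exactly the hypothesis needed for that step.
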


\begin{proof} 
The argument is similar to the proof of Theorem \ref{th:basic2}, hence we will be brief. Let $R$ be the Dirac morphism \eqref{eq:basic2} for the uncolored surface, with base map $\Phi\colon M(\Sigma)\to D^\E$. 
Let $Q\hra D^\E$ be the submanifold, consisting of maps $d\colon \E\to D$ taking colored edges into $G$. 
Then $M^c(\Sigma)=\Phi^{-1}(Q)$. Let $\iota_M\colon M^c(\Sigma)\to M(\Sigma)$ and 
$\iota_Q\colon Q\to D^\E$ be the embeddings. The Dirac Cross Section Theorem \ref{th:cross} gives a Dirac morphism
\[ \iota_Q^!R\colon 	\left(\T M^{\on{col}}_D(\Sigma),TM^{\on{col}}_D(\Sigma)\right)\da \iota_Q^!  (\AA^\E ,D^\E\times \dd^\V),\]
with base map $\Phi\circ \iota_M$. For each colored edge, let $\iota_\ez\colon G\to D$ be the inclusion. Then 
\[ \iota_Q^!\AA^{\E^c}=\AA^{\E_{\on{free}}}\times 
	\prod_{\ez\in  \E_{\on{col}}   }  \iota_{\ez}^!\AA.\] 
The image of the Dirac structure $\ti{S}_\ez=D\times \g\oplus \g\subset \AA$ under the 
anchor is tangent to $G$, hence it descends, by reduction,  to a Dirac structure $S_\ez\subset \iota_{\ez}^!\AA$, which we may regard as defining a Courant morphism $\iota_{\ez}^!\AA\da 0$. 
Taking the product over all colored edges, together with the identity morphism for the free edges, we obtain a Courant morphism 
$S\colon  \iota_Q^! \AA^\E \da \AA^{\E_{\on{free}}}$. In terms of the standard splitting, this is just $S=\T\pi$ for the projection 
$Q\to D^{\on{free}}$. By an argument parallel to that in the proof of Theorem \ref{th:basic2}, this 
is a Dirac morphism 
\[ S\colon \iota_Q^!  (\AA^\E ,D^\E\times \dd^\V)\da \left(\AA^{\E_{\on{free}}} ,D^{\E_{\on{free}}}\times (\g\oplus \g)^{\E_{\on{free}}}\right).\]
	We obtain $R^{\on{col}}$ as a composition of the Dirac morphisms, $R^{\on{col}}=S\circ \iota^!R$. 
\end{proof}

\end{appendix}

\bibliographystyle{amsplain} 
\def\cprime{$'$} \def\polhk#1{\setbox0=\hbox{#1}{\ooalign{\hidewidth
			\lower1.5ex\hbox{`}\hidewidth\crcr\unhbox0}}} \def\cprime{$'$}
\def\cprime{$'$} \def\cprime{$'$} \def\cprime{$'$} \def\cprime{$'$}
\def\polhk#1{\setbox0=\hbox{#1}{\ooalign{\hidewidth
			\lower1.5ex\hbox{`}\hidewidth\crcr\unhbox0}}} \def\cprime{$'$}
\def\cprime{$'$} \def\cprime{$'$} \def\cprime{$'$} \def\cprime{$'$}
\providecommand{\bysame}{\leavevmode\hbox to3em{\hrulefill}\thinspace}
\providecommand{\MR}{\relax\ifhmode\unskip\space\fi MR }
\providecommand{\MRhref}[2]{%
	\href{http://www.ams.org/mathscinet-getitem?mr=#1}{#2}
}
\providecommand{\href}[2]{#2}

\
\end{document}